\def\sqr#1#2{{\vcenter{\vbox{\hrule height.#2pt
        \hbox{\vrule width.#2pt height#1pt \kern#2pt
        \vrule width.#2pt}
        \hrule height.#2pt}}}}
\newcommand{\nc}{\newcommand}
\nc{\parent}[1]{$[\![#1]\!]$}
\newtheorem{theorem}{Theorem}[section]
\newtheorem{lemma}{Lemma}[section]
\newtheorem{example}{Example}[section]
\newtheorem{corollary}{Corollary}[section]
\newtheorem{proposition}{Proposition}[section]
\newtheorem{remark}{Remark}[section]
\newtheorem{definition}{Definition}[section]
\newtheorem{assumption}{Assumption}[section]
\newenvironment{proof}{{\sc Proof.}\hspace{3mm}}{\qed}
\newenvironment{pf-main}{{\sc Proof of Theorem \ref{mainresult}.}\hspace{3mm}}{\qed}
\DeclareMathAlphabet{\mscr}{T1}{pbsi}{m}{it}
\DeclareMathAlphabet{\mathpzc}{OT1}{pzc}{m}{it}
\nc{\cadlag}{c\`{a}dl\`{a}g } \nc{\ba}{\begin{array}}
\nc{\ea}{\end{array}} \nc{\be}{\begin{equation}}
\nc{\ee}{\end{equation}} \nc{\bea}{\begin{eqnarray}}
\nc{\eea}{\end{eqnarray}} \nc{\bean}{\begin{eqnarray*}}
\nc{\eean}{\end{eqnarray*}} \nc{\bu}{\bullet} \nc{\nn}{\nonumber}
\nc{\cA}{{\mathcal A}} \nc{\cB}{{\mathcal B}} \nc{\cC}{{\mathcal
C}} \nc{\cD}{{\mathcal D}} \nc{\bbD}{\mathbb{D}}
\nc{\cG}{{\mathcal G}} \nc{\cF}{{\mathcal F}} \nc{\cS}{{\mathcal
S}} \nc{\cU}{{\mathcal U}} \nc{\cH}{{\mathcal H}}
\nc{\cK}{{\mathcal K}}\nc{\cL}{{\mathcal L}}  \nc{\cM}{{\mathcal
M}} \nc{\cO}{{\mathcal O}} \nc{\cP}{{\mathcal P}}
\nc{\bbE}{\mathbb{E}} \nc{\bbF}{\mathbb{F}}
\nc{\bbEQ}{\mathbb{E}_{\mathbb{Q}}} \nc{\eps}{\varepsilon}
\nc{\bbEP}{\mathbb{E}_{\mathbb{P}}}\nc{\bbL}{\mathbb{L}}
\nc{\what}{\widehat} \nc{\bbP}{\mathbb{P}} \nc{\bbQ}{\mathbb{Q}}
\nc{\del}{\partial} \nc{\Om}{\Omega} \nc{\om}{\omega}
\nc{\bbR}{\mathbb{R}} \nc{\bbN}{\mathbb{N}} \nc{\fps}{$(\Om, \cF,
(\cF_t)_{t\geq 0}, \bbP)$} \nc{\bbC}{\mathbb{C}}
\nc{\bfr}{\begin{flushright}} \nc{\efr}{\end{flushright}}
\nc{\dXt}{\Delta X_{t}} \nc{\dXs}{\Delta X_{s}}
\nc{\bs}{\blacksquare} \nc{\dX}{\Delta X} \nc{\dY}{\Delta Y}
\nc{\dnkx}{\left(X(T^{n}_{k})-X(T^{n}_{k-1})\right)}
\nc{\esssup}{\mathrm{ess}\mbox{ }\mathrm{sup}}
\nc{\essinf}{\mathrm{ess}\mbox{ } \mathrm{inf}}
\nc{\dhats}{\widehat{\delta_s}} \nc{\half} {\frac{1}{2}}
\nc{\norm}{\parallel}
\nc{\qed}{\hfill $\blacksquare$}
\nc{\ol}{\overline}
\def\rar{\rightarrow}
\nc{\chf}{\mbox{$\mathbf1$}}
\begin{document}
\title{Dynamic Markov bridges motivated by models of insider trading\footnote{This research benefited from the support of the ``Chair Les Particuliers Face aux Risques'', Fondation du Risque (Groupama-ENSAE-Dauphine), the GIP-ANR ``Croyances'' project as well as from the ``Chaire Risque de cr\'edit'', F\'ed\'eration Bancaire Fran\c caise, of the
Europlace Institute of Finance, for their support. Part of this research has been done when the second author was visiting Evry University during September 2008.}}
\author{Luciano Campi\footnote{CEREMADE, University Paris-Dauphine, campi@ceremade.dauphine.fr.}\qquad Umut \c Cetin\footnote{Department of Statistics, London School of Economics, u.cetin@lse.ac.uk.} \qquad Albina Danilova\footnote{Department of Mathematics, London School of Economics, a.danilova@lse.ac.uk.}}
\maketitle

\begin{abstract} Given a Markovian Brownian martingale $Z$, we build a
process $X$ which is a martingale in its own filtration and
satisfies $X_1 = Z_1$. We call $X$ a dynamic bridge, because its terminal value $Z_1$ is not known in advance.
We compute explicitly its semimartingale
decomposition under both its own filtration $\cF^X$ and the filtration $\cF^{X,Z}$ jointly generated by $X$ and $Z$. Our construction is heavily based on parabolic PDE's and filtering techniques.
As an application, we explicitly
solve an equilibrium model with insider trading, that can be viewed
as a non-Gaussian generalization of Back and Pedersen's \cite{BP}, where insider's additional information evolves over time.\\

\textbf{Key-words:} Markovian bridges, martingale problem, nonlinear filtering, parabolic PDE's, equilibrium, insider trading.

\textbf{AMS classification (2000):} 60G44, 60H05, 60H10, 93E11

\textbf{JEL classification:} D82, G14
\end{abstract}

\section{Introduction}
Consider two independent Brownian motions $B$ and $\beta$ over the time interval $[0,1]$ and define a signal process $Z$ as the unique strong solution to
\[dZ_t = \sigma (t)a(V(t),Z_t)d\beta_t ,\]
where $\sigma:[0,1]\mapsto \bbR_+$ is a deterministic function, $V(t):=c+\int_0^t\sigma^2(s)\, ds$ for some constant $c>0$, and $a:[0,1]\times \bbR\mapsto \bbR$ is regular enough for ensuring the existence of a unique strong solution. Moreover, $\sigma, V$ and $a$ are required to satisfy further regularity conditions precise statements of which are given in Assumptions \ref{AssZ},  \ref{fs} and \ref{bt}.

We are interested in the construction of a Markov process $X$ which is a martingale in its own filtration and such that $X_1 = Z_1$. This construction will be performed adding a well-chosen drift to a suitable Brownian martingale corresponding to $B$. Such a drift will be a nonlinear function of $Z_t$ and $X_t$. Our goal is to obtain the Doob-Meyer decomposition of $X$ under both filtrations $\cF^X$ and $\cF^{X,Z}$, i.e. that generated by $X$ itself and that generated jointly by $X$ and the signal $Z$. We called such a process a dynamic Markov bridge because it is Markov and especially because its terminal value is not fixed in advance but it is dynamic itself, being the terminal value of the process $Z$. This construction is obtained in Theorem \ref{mainresult}, which is the main result of the mathematical part of the paper.

This study has a two-fold motivation, probabilistic and financial. First, the purely probabilistic one: in the paper by F\"ollmer et al. \cite{fwy}, a thorough investigation of this problem has been done in the case $a\equiv \sigma \equiv 1$, where $Z$ as well as $X$ are Gaussian processes. More precisely, they studied solutions $X$ of SDE's $dX_t = dB_t + \alpha_t dt$, where $\alpha_t$ depends \emph{linearly} on $X$ and $Z=\beta$. They obtain a characterization of such linear drifts $\alpha_t$ making $X$ a Brownian motion in its own filtration in terms of Volterra kernels solutions to some integral equations, that can be reduced in some special case to a Sturm-Liouville equation. F\"ollmer et al. were in turn motivated by the following natural modification of the classical Brownian bridge dynamics
\[dX_t = dB_t + \frac{\beta_t - X_t}{1-t}dt\]
where in the drift $\beta_t$ replaces $\beta_1$ as in the classical ``static'' Brownian bridge. It has been shown in \cite{fwy} that $X$ is still a bridge in the sense that $X_1 =\beta_1$ but it is not a Brownian motion in its own filtration anymore. They then focused more on general linear drifts preserving the Brownian property. Considering only  linear drifts allows them to use the nice and powerful relation between Gaussian processes and Volterra kernels. Few questions naturally arise from that work: What happens if the signal $Z$ is not necessarily Gaussian? Is it still possible to construct a ``dynamic'' bridge $X$ with the required properties? It turns out that it is still possible, but using completely different techniques. Indeed, $Z$ being  not Gaussian anymore, one is lead to consider nonlinear drifts to build the bridge, which makes impossible the use of Gaussian processes theory. However, the Markov nature of the problem allows us to use techniques from parabolic PDEs and those from filtering theory in order to carry out our analysis.

The second motivation -- that we share with F\"ollmer et al. \cite{fwy}  -- is a financial one: The dynamic bridge $X$ is the solution of a Kyle-Back type equilibrium model of a gradually informed insider trading (see \cite{B} for initial information and \cite{BP, Wu, Danilova} for the dynamic information case). In such a model, the insider observes a signal process (unknown to the market) $Z$ as above with $a\equiv 1$ driven by the Brownian motion $\beta$. She applies a well-chosen drift, modelling her strategy, to the Brownian motion $B$ in such a way that (i) the resulting process $X$ ends up in $Z_1$ and (ii) the distribution of the process remains unchanged, i.e. $X$ is again a Brownian motion in its own filtration. Condition (i) guarantees that the strategy maximizes the insider's expected gain. On the other hand, condition (ii) means that the strategy of the insider, i.e. the drift, is ``inconspicuous'', and this corresponds to the notion of equilibrium as defined in \cite{B,BP}. The reader is referred once more to the papers \cite{B,BP,Wu,Danilova} for more financial as well as mathematical details. Our probabilistic construction of the dynamic bridge $X$ leads to an interesting generalization of such a model, where the signal modelling insider's dynamic information is not necessarily Gaussian. Even in this more general framework, we are able to give an explicit solution for the equilibrium total demand and optimal insider's strategy in our main result of the financial part of the paper, Theorem \ref{t31}. Interestingly, the existence of a solution for insider's maximization problem imposes a very precise structure on the form of the signal volatility $a(t,Z_t)$ (see Section \ref{application}), resulting in the insider's signal being a function of a Gaussian process. This seems to indicate that a non-trivial generalization beyond a Gaussian setting is impossible. Nonetheless, we would like to stress the fact that while the financial application forces the signal to be `almost-Gaussian' as explained in  Remark \ref{rem:AG}, the construction we perform in Section \ref{bridgesec} is much more general, since it includes signals which are not necessarily Gaussian.

The paper is structured as follows: Section \ref{bridgesec} contains the motivation and formal definition of such a bridge.  A new proof of Gaussian bridge construction is given in Section \ref{gaussian}, while in the Section \ref{general} the general construction is proved. At the end of this section, we also give an application of our result to build an Ornstein-Uhlenbeck bridge. Finally, in Section \ref{application} we shortly introduce the financial model and we apply the main result contained  in Section \ref{bridgesec} to find the equilibrium total demand and optimal insider's strategy.

\section{Formulation of the problem and some auxiliary results}\label{bridgesec}
Let $(\Omega , \cG , (\cG_t) , \bbQ)$ be a filtered probability space  satisfying the usual conditions. Note that we do not require $\cG_0$ to be trivial.  Assume that on this probability space there exist two independent standard Brownian  motions, $B$ and $\beta$, and a random variable $Z_0$ in $\cG_0$, which implies that $Z_0$ is independent from $B$ and $\beta$.

Let $\mathcal{Z}=(\Omega , \cG , (\cG_t), (Z_t) ,(P^z)_{z\in \bbR})$ be a diffusion process with values in
$\bbR$. Here we are using the formulation of a Markov process as
given in, e.g., Blumenthal and Getoor \cite{BG} or Sharpe \cite{Sh}. Time varies in the finite interval
$[0,1]$. We will use the notation $\mathbb R_+$ for $[0,\infty)$ and $\cF^Y_t$ for $\sigma(Y_s; s \leq t)$ for any (possibly, vector-valued) stochastic process $Y$.

We further assume that $Z$ is the unique strong solution on $(\Omega , \cG , (\cG_t) , \bbQ)$ of
\begin{equation}\label{SDEsignal} dZ_t = \sigma(t) a(V(t),Z_t) d\beta_t, \quad t\in (0,1],\end{equation}
with $Z_0\in \cG_0$ being a random variable with distribution, $\mu$, and where $\sigma:[0,1]\mapsto \bbR_+$ is a deterministic function, $V(t):=c+\int_0^t\sigma^2(s)\, ds$ for some constant $c>0$, and $a:[0,1]\times \bbR\mapsto \bbR$ is regular enough for ensuring the existence of a unique strong solution. Moreover, $\sigma, V$ and $a$ are required to satisfy further regularity conditions precise statements of which are given in Assumptions \ref{AssZ},  \ref{fs} and \ref{bt} below.

The rest of this section will be devoted to
the construction of a process $X$ and a probability measure $\mu$ on $\bbR$ satisfying the following three conditions:
\begin{itemize}
\item[\bf{C1}] For every $T<1$, $X$ is the unique strong solution of the SDE
\[
X_t=\int_0^t a(s,X_s) dB_s + \int_0^t \alpha(s,X_s, Z_s)ds, \qquad \mbox{for } t \in (0,T]
\]
for some Borel measurable real valued function $\alpha$. Moreover,  $(X,Z)$ is a Markov process. More precisely,  $(\Omega , \cG ,
(\cG_t) , (X_t,Z_t), (P^{x,z})_{(x,z)\in \bbR^2})$ is a
Markov process with values in $\bbR^2$ endowed with its Borel
$\sigma$-algebra, with an initial distribution given by $\delta_{0} \otimes
\mu$ where $\delta_0$ is the Dirac measure at $0$.
 \item[\bf{C2}] $\lim_{t \uparrow 1} X_t$ exists $P^{0,z}$-a.s. and $X_1:=\lim_{t \uparrow 1} X_t=Z_1$, $P^{0,z}$-a.s..
\item[\bf{C3}] $(X_t)_{t \in [0,1]}$ is a local martingale in its own filtration.
\end{itemize}
\begin{remark} In view of Theorem 8.1 in \cite{ls} the condition C3 implies that $X$, if exists, will be a diffusion process with diffusion coefficient $a$ and no drift in its own filtration. As C2 is also in place this diffusion process will be conditioned to hit $Z_1$ at time 1. If we have allowed $X$ to be adapted to the filtration generated by $Z_1$ and $B$ such a process can be obtained using the available theory of `static' Markov bridges (see, e.g., \cite{fpy} and Proposition 37 in  \cite{fabrice}) with a drift $\alpha(t,X_t,Z_1)$ since $Z_1$ and $B$ are independent. However, the condition C1 stipulates that $X$ should be adapted to the filtration generated by the independent processes $Z$ and $B$. This forces us to develop a theory of `dynamic' Markov bridges as we will describe in the subsequent sections.
\end{remark}
\begin{remark}
The main difficulty with the construction of the process $X$ is that all the conditions C1-C3 have to be met simultaneously. To illustrate this point consider the simple case $a=\sigma=1$.

If one allows the drift, $\alpha$,  in condition C1 to depend on $Z_1$, then
\be \label{eq:bb}
X_t= B_t +  \int_0^t \frac{Z_1-X_s}{1-s}\,ds
\ee
has a unique strong solution over $[0,1)$ and its solution can be continuously extended to the full interval $[0,1]$ since conditioned on $Z_1=z$ this is the SDE for a Brownian bridge from $0$ to $z$ over the interval $[0,1]$. This is a Markovian bridge conditioned to hit $Z_1$ at $t=1$ and, moreover, it is a martingale in its own filtration (see expression (10) and the discussion after it in \cite{fwy}).

If we replace $Z_1$ with $Z_t$ in the above formulation we obtain the SDE
\[
X_t= B_t +  \int_0^t \frac{Z_s-X_s}{1-s}\,ds
\]
which has a unique strong solution which satisfies C1 and C2 (see Lemma 2.1 in \cite{fwy}). However, $X$ does not satisfy C3 (see Lemma 2.2 in \cite{fwy}).

One is also tempted to think that a projection of the solution of (\ref{eq:bb}) onto the filtration generated by $X$ and $Z$ could give us the construction that we seek.  Note that the solution of (\ref{eq:bb}) is adapted to the filtration  $(\cF^{B,Z}_t)$ enlarged with $Z_1$. In this enlarged filtration $Z$ has the decomposition
\[
Z_t= \bar{\beta}_t +  \int_0^t \frac{Z_1-Z_s}{1-s}\,ds
\]
where $\bar{\beta}$ is a standard Brownian motion adapted to this filtration (see Theorem 3 in Chap. VI of \cite{Protter}) and independent of $B$. Comparison of the SDEs for $X$ and $Z$ reveals an inherent symmetry of these two processes. Thus, the semimartingale decomposition of these two processes with respect to $(\cF^{X,Z}_t)$ should have a symmetric structure, in particular if one is a martingale with respect to $(\cF^{X,Z}_t)$ so is the other. However, this is inconsistent with the structural assumptions we have on $X$ and $Z$ which are manifested in C1 and (\ref{SDEsignal}).

These examples, in particular the last one, demonstrate that the solution to our problem cannot be obtained via a combination of available enlargement of filtration and nonlinear filtering techniques.
\end{remark}
We would like to stress here that the functions $\sigma$ and $V$ that appeared in the dynamics of $Z$ play a crucial role in the existence of the solution of the problem above. Indeed, suppose that $\sigma \equiv 1$ and $V(t)=t$ for each $t \geq 0$, and  $a(t,z)$ is regular enough to ensure the existence of a square integrable non-constant solution to (\ref{SDEsignal}). Suppose that there exists a solution to the  problem defined by conditions C1-C3.  Consider the probability measure $\bbP$ defined on $(\Om, \cF^X_1 \vee \cF^Z_1)$ by
\[
\bbP(E)=\int_{\bbR} P^{0,z}(E)\, \mu(dz), \qquad \forall E \in \cF^X_1 \vee \cF^Z_1.
\]
Note that $X_1=Z_1, P^{0,z}$-a.s. for every $z \in \bbR$ implies that $X_1=Z_1, \bbP$-a.s., therefore, for any
bounded measurable function $f$,
\be \label{eq:svfc}
\bbE[f(Z_1)]=\bbE[f(X_1)]
\ee
where $\bbE$ is the expectation operator with respect to $\bbP$. One one hand,
\[
\bbE[f(Z_1)]= \bbE[\bbE[f(Z_1)|Z_0]]=\int_{\bbR} E^{P^z}[f(Z_1)]\mu(dz) =\int_{\bbR}\int_{\bbR}f(y) p(z,y) \,dy \, \mu(dz)\]
where $p(z,y)\,dy=P^z(Z_1 \in dy)$.  On the other hand, conditions C1 and C3 imply that
\[
X_t=\int_0^t a(s,X_s)d B^X_s
\]
for some Brownian motion $B^X$ adapted to $\cF^X$. Comparing this to (\ref{SDEsignal}) we see that the law of $X_t$ is that of $Z_t$ conditioned on $Z_0=0$ for any $t \in [0,1]$. Therefore,
\be \label{xp2}
\bbE[f(X_1)]=\int_{\bbR}f(y) p(0,y)\, dy.
\ee
Let $f(y)= e^{i r y}$. Then, in view of (\ref{eq:svfc}) we have
\bean
\int_{\bbR} e^{i r y} p(0,y)\, dy&=& \int_{\bbR}\int_{\bbR}e^{i r y} p(z,y) \,dy \, \mu(dz) \\
&=&\int_{\bbR} e^{i r y}\left(\int_{\bbR} p(z,y)\, \mu(dz)\right)\, dy.
\eean
Note that the interchange of integrals is justified since $|e^{i r y}p(z,y)|< p(z,y)$ and \\ $\int_{\bbR}\int_{\bbR} p(z,y) \,dy \, \mu(dz)=1$. This implies that the characteristic functions of the measures  $p(0,y)\, dy$ and $ \left(\int_{\bbR} p(z,y)\, \mu(dz)\right) \, dy$ are the same. We can assume, without substantial loss of generality, that $p$ is continuous in both parameters\footnote{This can be achieved by standard regularity assumptions on $a$ which will ensure that $p$ is a continuous solution of a Kolmogorov equation (see Theorem 3.2.1 in \cite{SV})}.  Therefore, we can invert the Fourier transform to  identify $\mu$ as the Dirac measure at $0$ and conclude that $X$ and $Z$ have the same law.

However, under the assumption $\mu=\delta_0$, Remark 5.2 (i) in \cite{fwy} shows that such a construction is not possible. Indeed, Remark 5.2 (i) in \cite{fwy} contains the following statement: Given filtration $\mathcal F_t$  and a square integrable $\mathcal F_t$-adapted processes $X$ and $M$ with the same second moments such that i) $X$ is a local martingale in its own filtration, ii) $M$ is an  $\mathcal F_t$-martingale, and iii) $M_1 =X_1$, then  $M_t =X_t$ for all $t\in [0,1]$. Applying this result to our setting we get $X_t = Z_t$ for all $t\in [0,1]$ and, thus,

\[ \int_0 ^t a(s,Z_s) d\beta_s =Z_t=X_t= \int_0 ^t a(s,Z_s) dB_s + \int_0 ^t \alpha(s,Z_s,Z_s) ds.\]
This implies that $\int_0 ^t \alpha(s,Z_s,Z_s) ds$ is a continuous martingale with finite variation, therefore it is identically $0$. Thus,
\[
\int_0 ^t a(s,Z_s) d\beta_s =Z_t=X_t= \int_0 ^t a(s,X_s) dB_s.
\]
Since $B$ and $\beta$ are independent, this yields that $[Z,X]\equiv 0$. However, as $Z=X$, we have $[Z,Z]\equiv 0$, which implies $X=Z\equiv 0$, which is a contradiction.\\

This example highlights that the relationship between $V(t)$ and $t$ is very important for the existence of a solution to the problem we aim to solve. The following assumption formalizes this relationship along with imposing some regularity conditions. In particular, Assumption \ref{AssZ}.1 rules out the above pathology, Assumption \ref{AssZ}.2 controls the speed of convergence of $V(t)-t$ to $0$ as $t \rar 1$ (for an earlier use of this assumption see \cite{Danilova}), and Assumptions \ref{AssZ}.3 to \ref{AssZ}.5 ensure sufficient regularity for the problem.
\begin{assumption}\label{AssZ}Fix a real number $c \in (0,1]$.
 $\sigma : [0,1] \mapsto \bbR_+$ and $a : [0,1]\times \bbR \mapsto \bbR_+$ are two measurable functions such that:\begin{enumerate}
\item $V(t) := c+ \int_0 ^t \sigma^2 (u)du > t$ for every $t\in [0,1)$, and $V(1)=1$;
\item \label{tomodify} $\lim_{t\uparrow 1}\lambda^2(t)\Lambda(t)\log(\Lambda(t))=0$,  where $\lambda (t)=\exp \left\{ -\int_{0}^{t}\frac{1}{V(s)-s}ds\right\} $ and $\Lambda (t)=\int_{0}^{t}\frac{1+\sigma^{2}(s)}{\lambda^{2}(s)}ds$;
\item $\sigma^2 (t)$ is bounded on $[0,1]$;
\item $a(t,z)$ is uniformly bounded away from zero, i.e. there exists a constant $\epsilon >0$ such that $a(t,z) \geq \epsilon$ for all $t \in [0,1]$ and $z\in \bbR$;
\item $a(\cdot, \cdot) \in C^{1,2}$ and has enough regularity in order for (\ref{SDEsignal}) has a unique strong solution\footnote{A sufficient condition ensuring strong solution is given in Assumption \ref{fs}.}.
\end{enumerate}
\end{assumption}

\begin{remark}\label{rem:lambda_to_0}
Notice that Assumption \ref{AssZ}.2 is, in fact, an assumption on $\Lambda(t)$, since we always have that $\lim_{t\uparrow 1}\lambda(t)=0$. Indeed, since $V(t)$ is increasing and $V(1)=1$, we have that $V(t)\leq1$ for $t\in [0,1]$ and therefore
\be
\lambda(t)\leq 1-t
\ee
which leads to conclusion that $\lim_{t\uparrow 1}\lambda(t)=0$. For another use of this assumption and further discussion see \cite{Danilova}.
\end{remark}
Although Assumption \ref{AssZ}.\ref{tomodify} seems to be involved, it is satisfied in many cases. The following remark states a sufficient condition for this assumption to be satisfied.
\begin{remark} Note that when $\lim_{t \uparrow 1} \Lambda(t)< \infty$ the condition is automatically satisfied due to the preceding remark. Next, suppose that $\lim_{t \uparrow 1} \Lambda(t)=\infty$, $\sigma$ is continuous in a vicinity of $1$ and $\sigma(1)\neq 1$. Then, an application of de L'H\^{o}pital rule yields
\[
0\leq \lim_{t \uparrow 1} \frac{\Lambda(t)\log(\Lambda(t))}{\lambda^{-2}(t)}=\frac{1+\sigma^2(1)}{2}\lim_{t \uparrow 1} \frac{\log(\Lambda(t))}{(V(t)-t)^{-1}}.
\]
Then note that since $\lim_{t \uparrow 1}\lambda^2(t)\Lambda(t)=0$, $\Lambda(t)\leq \lambda^{-2}(t)$ for $t$ close to $1$. Thus,
\[
0\leq \lim_{t \uparrow 1} \frac{\Lambda(t)\log(\Lambda(t))}{\lambda^{-2}_t}\leq\frac{1+\sigma^2(1)}{2}\lim_{t \uparrow 1} \frac{\log(\lambda^{-2}(t))}{(V(t)-t)^{-1}}=(1+\sigma^2(1))\lim_{t \uparrow 1} \frac{\int_0^t\frac{1}{V(s)-s}ds}{(V(t)-t)^{-1}}=0,
\]
after another application of de L'H\^{o}pital rule since $\sigma^2(1)\neq 1$. This in particular shows that Assumption \ref{AssZ}.\ref{tomodify} is satisfied when $\sigma$ is a constant.
\end{remark}
Before we present our main result we shall collect some preliminary results on the transition density of the diffusion
\[
d\xi_t=a(t,\xi_t)d\beta_t.
\]
We are in particular interested in the existence and smoothness of this transition density. The natural way to obtain these results is to use the link between the transition density and the fundamental solution of
\be \label{pde0} w_u (u,z)=\half \left(a^2 (u,z) w(u,z)\right)_{zz}, \ee
established in Corollary 3.2.2 in \cite{SV}. However, as we do not assume $a$ to be bounded, this theorem is not applicable. On the other hand, since $a$ is bounded away from $0$, the following function \be
\label{ef} A(t,x):=\int_0^x \frac{1}{a(t,y)}dy \ee
 is well defined and the transformation defined by
$\zeta_t:=A(t, \xi_t)$ will yield, via  It\^{o}'s formula,
\be \label{zeta} d\zeta_t= d\beta_t + b(t,\zeta_t)dt, \ee where \be
\label{bee} b(t,x):=A_t(t,A^{-1}(t,x))-\half a_z(t,A^{-1}(t,x)), \ee
and $A^{-1}$, the inverse of $A$, is taken with respect to the space variable.  This transformation along with the next assumption is going to provide a uniformly elliptic operator which via Theorem 10 in Chap. I of \cite{friedman} and Theorem 3.2.1 of \cite{SV} would imply the existence and smoothness of transition density of $\xi$.
\begin{assumption} \label{fs}
$b$ and $b_x$ are uniformly bounded on $[0,1]\times \bbR$ and $b_x$
is Lipschitz continuous uniformly in $t$.
\end{assumption}

Due to this assumption Corollary 3.2.2 of \cite{SV} implies that the transition density of $\zeta$ is the fundamental solution of
\be \label{eq:pde1}
 w_u(u,z)=\half w_{zz}(u,z)-(b(u,z) w(u,z))_z.
 \ee
For the reader's convenience we recall the definition (p.~3 of \cite{friedman}) of fundamental solution, $\Gamma(t,x;u,z)$, of
(\ref{eq:pde1}) as the function satisfying
\begin{enumerate}
\item For fixed $(t, x)$, $\Gamma(t,x;u,z)$ satisfies (\ref{eq:pde1}) for all $u>t$;
\item For every continuous and bounded  $f:\bbR \mapsto \bbR$
\be \label{eq:fsb}
\lim_{u \downarrow t} \int_{\bbR} \Gamma(t,x;u,z)f(x)dx =f(z).
\ee
\end{enumerate}
The following proposition provides the existence of the transition density of $\xi$ and formalizes the smoothness requirement on the transition density together with some properties that we will use later.
\begin{proposition} \label{Gexists}Under Assumptions \ref{AssZ} and \ref{fs} there
exists a fundamental solution, $\Gamma \in C^{1,2,1,2}$, to (\ref{eq:pde1}) which also solves the adjoint equation
\be \label{eq:adj1}
 v_t(t,x) + b(t,x) v_x(t,x) + \half
v_{xx}(t,x)=0.
\ee Moreover,  the function $G(t,x;u,z)$ defined by
\be \label{GviaGamma} G(t,x;u,z):=\Gamma(t, A(t,x);u , A(u,z))\frac{1}{a(u,z)},
\ee
satisfies (\ref{pde0}) for fixed $(t,x)$ and it is the transition density of $\xi$, i.e.
\[
G(t,x;u,z) dz= P(\xi_u \in dz|\xi_t=x) \qquad \mbox{for } u \geq t.
\]
Furthermore, $G_x(t,x;u,z)$ exists and satisfies
\be \label{nodrift}
\int_{\bbR}G_x(t,x;u,z)dz=0=\int_{\bbR}\Gamma_x(t,x;u,z)dz.
\ee
\end{proposition}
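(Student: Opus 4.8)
The plan is to reduce the whole statement to the transformed diffusion $\zeta$ of (\ref{zeta}). Although $a$ need not be bounded, so that neither the classical parabolic theory nor Corollary~3.2.2 of \cite{SV} can be applied to (\ref{pde0}) directly, the substitution $A$ of (\ref{ef}) carries the problem over to $\zeta$, whose drift $b$ is — by Assumption \ref{fs} — bounded and Lipschitz in space uniformly in $t$, and, since $a\in C^{1,2}$ is bounded away from $0$ (Assumption \ref{AssZ}), so that $A$, $A^{-1}$ and their first time- and first two space-derivatives are continuous, jointly continuous in $(t,x)$. Thus the operator on the right-hand side of (\ref{eq:pde1}) is uniformly parabolic with constant leading coefficient $\half$ and coefficients meeting the hypotheses of the parametrix construction. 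First I would invoke Theorem~10 in Chapter~I of \cite{friedman}, together with the accompanying Gaussian-type bounds
\[
|\Gamma(t,x;u,z)|\le C(u-t)^{-1/2}e^{-\kappa|z-x|^2/(u-t)},\qquad |\Gamma_x(t,x;u,z)|\le C(u-t)^{-1}e^{-\kappa|z-x|^2/(u-t)},
\]
to obtain a fundamental solution $\Gamma$ of (\ref{eq:pde1}) which is $C^{1,2}$ in the forward pair $(u,z)$; the duality theory for parabolic fundamental solutions in Chapter~I of \cite{friedman} then gives that, for fixed $(u,z)$, the map $(t,x)\mapsto\Gamma(t,x;u,z)$ is $C^{1,2}$ on $\{t<u\}$ and solves the adjoint equation (\ref{eq:adj1}). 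This establishes $\Gamma\in C^{1,2,1,2}$ and the adjoint claim.

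Next I would identify the transition densities probabilistically. Since $b$ is Lipschitz, (\ref{zeta}) has a unique non-exploding strong solution $\zeta$, and, its diffusion coefficient being constant, Corollary~3.2.2 of \cite{SV} now applies and shows that $\Gamma(t,x;u,\cdot)$ is the transition density of $\zeta$; in particular $\int_{\bbR}\Gamma(t,x;u,z)\,dz=1$ for all $t<u$, $x\in\bbR$. Because $\zeta_t=A(t,\xi_t)$ with $A(t,\cdot)$ a strictly increasing $C^1$ bijection of $\bbR$, the events $\{\xi_t=x\}$ and $\{\zeta_t=A(t,x)\}$ coincide, and the change-of-variables formula for densities, using $\partial_z A(u,z)=1/a(u,z)$, gives
\[
P(\xi_u\in dz\mid\xi_t=x)=\Gamma\big(t,A(t,x);u,A(u,z)\big)\,\frac{1}{a(u,z)}\,dz=G(t,x;u,z)\,dz,
\]
which is (\ref{GviaGamma}); moreover $G$ inherits $C^{1,2,1,2}$-regularity from that of $\Gamma$ and $A$. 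That $G(t,x;u,\cdot)$ solves (\ref{pde0}) for fixed $(t,x)$ I would then check by a direct chain-rule computation — this is just the transformation of the parabolic equation (\ref{eq:pde1}) under the time-dependent space substitution $\zeta=A(t,\xi)$ — using the $C^{1,2}$ regularity of $A$ and $A^{-1}$; equivalently it is the Fokker--Planck equation of $\xi$, now available because $G$ has been shown to be smooth.

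Finally, for (\ref{nodrift}): $\Gamma_x$ exists by the regularity just obtained, and differentiating $\int_{\bbR}\Gamma(t,x;u,z)\,dz=1$ in $x$ and interchanging the derivative with the integral — justified by the displayed Gaussian bounds on $\Gamma$ and $\Gamma_x$ — gives $\int_{\bbR}\Gamma_x(t,x;u,z)\,dz=0$. For $G$, differentiating (\ref{GviaGamma}) yields $G_x(t,x;u,z)=\Gamma_x\big(t,A(t,x);u,A(u,z)\big)\,\frac{1}{a(t,x)a(u,z)}$, and the substitution $z'=A(u,z)$, with Jacobian $1/a(u,z)$, reduces $\int_{\bbR}G_x(t,x;u,z)\,dz$ to $\frac{1}{a(t,x)}\int_{\bbR}\Gamma_x\big(t,A(t,x);u,z'\big)\,dz'=0$ by the identity just proved; alternatively one differentiates $\int_{\bbR}G(t,x;u,z)\,dz=1$ directly.

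The genuinely substantive points, as opposed to routine citations, are the first two steps: the reduction to $\zeta$ — which is exactly where $a\in C^{1,2}$, $a\ge\epsilon$ and Assumption \ref{fs} are used, so that $A$ is a $C^{1,2}$ diffeomorphism with bounded and sufficiently regular push-forward drift — and the backward regularity of $\Gamma$, namely that it is $C^{1,2}$ in $(t,x)$ and solves (\ref{eq:adj1}), which is classical but is the part of Friedman's theory one must invoke beyond mere existence of $\Gamma$. Everything that follows is a change of variables together with a differentiation-under-the-integral-sign argument controlled by the parametrix estimates.
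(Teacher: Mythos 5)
Your proposal is correct and follows essentially the same route as the paper: reduce to $\zeta$ via the substitution $A$, invoke Friedman (Theorem 10 / the adjoint duality, i.e.\ Theorem 15, in Chap.\ I) for existence, $C^{1,2,1,2}$-regularity and the Gaussian bounds, use Corollary 3.2.2 of \cite{SV} to identify $\Gamma$ with the transition density of $\zeta$, change variables to get $G$, and establish (\ref{nodrift}) by differentiating $\int\Gamma\,dz=1$ under the integral sign via the Gaussian estimate on $\Gamma_x$. The only cosmetic difference is that you reduce $\int G_x\,dz=0$ to the $\Gamma$ case by an explicit substitution, whereas the paper simply notes both $G$ and $\Gamma$ integrate to $1$ and applies the same interchange argument.
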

 \begin{proof} Since $b$ and $b_x$ are bounded and H\"older continuous under the assumptions of the proposition, it follows from Theorem 10 in Chap. I of \cite{friedman} that the fundamental solution, $\Gamma(t,x;u,z)$, to  (\ref{eq:pde1}) exists  and is also the fundamental solution of (\ref{eq:adj1}) by Theorem 15 in Chap. I of \cite{friedman}. In particular,  $\Gamma \in C^{1,2,1,2}$.
 Moreover, Assumption \ref{fs} also implies, due to Corollary 3.2.2 in \cite{SV},  that $\Gamma$ is the transition density
 of $\zeta$.

Define $G(t,x;u,z)$ by (\ref{GviaGamma}) and observe that $G(t,x;u,z)$ for fixed $(t,x)$ solves (\ref{pde0}). Since by definition $\zeta_t=A(t,\xi_t)$ and $A$ is strictly increasing
\bean
G(t,x;u,z)\, dz& =&\Gamma(t,A(t,x);u,A(u,z))\frac{1}{a(u,z)}\, dz \\
&=&\Gamma(t,A(t,x);u,A(u,z))\, dA(u,z)\\
&=& P(\zeta_u \in dA(u,z)|\zeta_t=A(t,x))\\
&=&P(A(u, \xi_u) \in dA(u,z)|A(t,\xi_t)=A(t,x)) \\
&=&P(\xi_u \in dz|\xi_t=x),
\eean
which establishes that $G$ is the transition density of $\xi$.

Moreover, equations (6.12) and (6.13) following Theorem 11 in Chapter I of \cite{friedman}
give the following estimates:
 \bea \label{est0}
\Gamma(t,x;u,z) &\leq& C
\frac{1}{\sqrt{u-t}}\exp\left(-c\frac{(x-z)^2}{2
 (u-t)}\right), \mbox{ and} \\
 \left|\Gamma_x(t,x;u,z)\right| &\leq& C \frac{1}{u-t}\exp\left(-c\frac{(x-z)^2}{2
 (u-t)}\right), \label{est1}
 \eea
 for some positive $C$, depending on $c$, and any $c <1$.  Note that $\int_{\bbR}G(t,x;u,z)dz=1=\int_{\bbR}\Gamma(t,x;u,z)dz$  since both $G$ and $\Gamma$ are transition densities. Thus, (\ref{nodrift}) will hold if one can  interchange the derivative and the integral. This is justified since  due to (\ref{est1}) $|\Gamma_x| \leq K \exp\left(-c_1 z^2 \right)$ when $x$ is restricted to a bounded interval and where the constants $K$ and $c_1$ do not depend on $x$ and might depend on $u$ and $t$. The result then follows from  an application of the Dominated Convergence Theorem.
\end{proof}\\

In order to motivate our main result let's first consider the special case of $\sigma\equiv 0$ so that $Z_1=Z_0$. In terms of the insider trading models that we have in mind this corresponds to the case when the insider has the complete information at time-$0$ regarding the time-$1$ value of the traded asset as in \cite{B}.  If $\mu(dz)=G(0,0; 1, z)dz$, then there exists a unique strong solution to
 \[
 dX_t=a(t,X_t)dB_t + a^2(t,X_t) \frac{G_x(t, X_t, 1, Z_0)}{G(t, X_t; 1,
 Z_0)}dt,
 \]
 with the initial condition $X_0=0$, which satisfies all the properties stated in {\bf C1-C3} (see Proposition 37 in  \cite{fabrice} for a proof of this and other related results. Observe that this result does not require uniform ellipticity of $a$, thus the extension to time inhomogeneous case is immediate).

 The specific form of the drift term in the SDE above thus gives us a hint to formulate the solution of the original problem stated at the beginning of this section. Before we state our main result we introduce one last assumption.

\begin{assumption} \label{bt} $b(t,x)$ is absolutely continuous with respect to $t$ for each $x$, i.e. there exists a  measurable function $b_t:[0,1] \times  \bbR \mapsto \bbR$ such that
\[
b(t,x)=b(0,x)+ \int_0^t b_t(s,x)\, ds,
\]
for each $x \in \bbR$. Moreover, $b_t$ is uniformly bounded.
\end{assumption}
 \begin{theorem} \label{mainresult} Suppose $\mu(dz)=G(0,0; c, z)dz$ where $c \in (0,1)$ is
 the real number fixed in Assumption \ref{AssZ} and $G$ is given by (\ref{GviaGamma}). Let for $t < 1$
 \be \label{SDEx}
 dX_t= a(t,X_t)dB_t + a^2(t,X_t) \frac{\rho_x (t, X_t,
 Z_t)}{\rho(t,X_t,Z_t)}dt,
\ee where
\be \label{d:rho}
\rho(t,x,z):=G(t,x;V(t),z).
\ee
 Under Assumption \ref{AssZ}, \ref{fs} and \ref{bt}, on every interval $[0,T]$ with $T <1$, there exists a
unique strong solution to the above SDE with the initial condition $X_0=0$.
Moreover, the conditions {\bf C1-C3} are satisfied.
\end{theorem}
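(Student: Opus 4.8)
The plan is to verify conditions C1, C2 and C3 in turn, bearing in mind that the SDE (\ref{SDEx}) is posed only on $[0,T]$ with $T<1$, its drift becoming singular as $t\uparrow 1$ since $V(t)-t\to 0$.

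\textbf{Step 1 (C1).} First I would record, via (\ref{GviaGamma}) and the change of variables $A(t,\cdot)$, the Gaussian-type bounds on $\rho$ and $\rho_x$ inherited from the estimates (\ref{est0})--(\ref{est1}) of Proposition \ref{Gexists}. On $[0,T]$ one has $V(t)-t\geq\delta_T>0$, so $\rho(t,x,z)=G(t,x;V(t),z)$ is smooth, strictly positive, bounded away from $0$ on compacts, and $(t,x,z)\mapsto a^2(t,x)\,\rho_x(t,x,z)/\rho(t,x,z)$ is locally Lipschitz in $(x,z)$ with at most linear growth in $x$, uniformly on $[0,T]$. Since $Z$ is the pathwise-unique strong solution of (\ref{SDEsignal}), adapted to $\cF^\beta\vee\sigma(Z_0)$, for each realisation of the path of $Z$ the SDE (\ref{SDEx}), driven by the independent Brownian motion $B$, has a unique strong solution up to a possible explosion time, and the linear growth in $x$ rules out explosion before $T$. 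Hence $X$ is $\cF^{B,Z}$-adapted, and pathwise uniqueness for the two-dimensional system $(X,Z)$, whose coefficients depend only on $t$ and the current value $(X_t,Z_t)$, yields by standard SDE theory that $(X,Z)$ is a Markov family with the announced generator and initial law $\delta_0\otimes\mu$. This is C1.

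\textbf{Step 2 (C3).} This is the conceptual core, which I would handle by nonlinear filtering. The key fact is that $\int_{\bbR}\rho(t,x,z)\,dz=1$, since $z\mapsto G(t,x;V(t),z)$ is a transition density, while $\int_{\bbR}\rho_x(t,x,z)\,dz=\int_{\bbR}G_x(t,x;V(t),z)\,dz=0$ by (\ref{nodrift}). Consequently, if one shows that the $\cF^X_t$-conditional law of $Z_t$ has density $z\mapsto\rho(t,X_t,z)$, then the optional projection onto $\cF^X$ of the drift in (\ref{SDEx}) is
\[
a^2(t,X_t)\,\bbE[\rho_x(t,X_t,Z_t)/\rho(t,X_t,Z_t)\mid \cF^X_t]=a^2(t,X_t)\int_{\bbR}\rho_x(t,X_t,z)\,dz=0,
\]
so $X$ is a continuous $\cF^X$-local martingale on $[0,1)$; by C2 it has an a.s.\ limit at $t=1$, so the stopping times $\inf\{t:|X_t|\geq n\}\wedge 1$ localise it on all of $[0,1]$, giving C3. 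To identify the conditional law I would first derive, from the forward and backward Kolmogorov equations satisfied by $G$ (Proposition \ref{Gexists}) together with $V'=\sigma^2$, the identity
\[
\rho_t(t,x,z)=-\half a^2(t,x)\rho_{xx}(t,x,z)+\half\sigma^2(t)\,\partial_z^2\big(a^2(V(t),z)\,\rho(t,x,z)\big),
\]
and then check, by It\^{o}'s formula along (\ref{SDEx}), that $\pi_t(z):=\rho(t,X_t,z)$ solves the Kushner--Stratonovich equation of the filtering problem with signal $Z$ and observation (\ref{SDEx}); the decisive simplification is that, evaluated at this candidate, the innovation/correction term collapses precisely because $\int_{\bbR}\rho_x(t,X_t,z)\,dz=0$. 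Since $\rho(0,0,z)=G(0,0;c,z)=\mu(z)$ by hypothesis, the initial data agree, so a uniqueness result for the filtering equation -- available under the present regularity (after the change of variables $A$ the relevant operators are uniformly elliptic with bounded coefficients on $[0,T]$, the Gaussian bounds controlling the growth) -- identifies $\pi_t(\cdot)$ with the genuine conditional density. Equivalently, one may carry out a Bayes/Girsanov computation under the reference measure $\bbQ^*$ obtained by removing the drift of $X$ in (\ref{SDEx}): under $\bbQ^*$, $X$ becomes a copy of the driftless diffusion with coefficient $a$ started at $0$, independent of $Z$, whose law is unchanged; this needs a Novikov-type bound on $[0,T]$ (or a localisation in $T$), again supplied by Step 1.

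\textbf{Step 3 (C2).} It remains to prove that $X_t$ converges a.s.\ as $t\uparrow 1$ with limit $Z_1$. Setting $D_t:=A(t,X_t)-A(V(t),Z_t)$ and applying It\^{o}'s formula together with the transformation (\ref{zeta}) and Assumptions \ref{fs} and \ref{bt}, one finds that $D$ obeys a linear bridge-type SDE, $dD_t=-\frac{1}{V(t)-t}D_t\,dt+dN_t+(\mbox{bounded-variation remainder})$, where the remainder is controlled by the uniform bounds on $b$, $b_x$, $b_t$ and $N$ is a continuous martingale with $d[N]_t$ comparable to $(1+\sigma^2(t))\,dt$. Solving with the integrating factor $\lambda(t)^{-1}=\exp\left(\int_0^t\frac{ds}{V(s)-s}\right)$ gives $D_t=\lambda(t)\int_0^t\lambda(s)^{-1}\,dN_s+(\mbox{lower order})$, a time-changed Brownian motion of variance comparable to $\lambda^2(t)\Lambda(t)$, which tends to $0$ by Remark \ref{rem:lambda_to_0}. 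The additional factor $\log\Lambda(t)$ in Assumption \ref{AssZ}.\ref{tomodify} upgrades this to almost-sure convergence $D_t\to 0$, via a Borel--Cantelli argument along a sequence $t_n\uparrow 1$ combined with a maximal inequality for the time-changed martingale, as in \cite{Danilova}. Since $A$ and its spatial inverse are jointly continuous and $Z$ is continuous with $V(1)=1$, it follows that $X_t\to Z_1$ a.s., so $X$ extends continuously to $[0,1]$ with $X_1=Z_1$; this is C2.

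I expect the main obstacle to be twofold: in Step 2, the rigorous identification of the $\cF^X_t$-conditional law of $Z_t$ -- i.e.\ uniqueness for the filtering equation, or equivalently the integrability needed to legitimise the Girsanov change of measure; and, more technically, the almost-sure convergence estimate in Step 3, for which Assumption \ref{AssZ}.\ref{tomodify} is tailor-made -- the example preceding that assumption already shows that a condition of this type is genuinely necessary.
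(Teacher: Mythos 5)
Your overall plan (verify C1, C2, C3 in turn, with C3 handled by nonlinear filtering) matches the paper's structure, and your Step 2 is essentially the paper's route: the paper also shows $p(t,R_t,\cdot)=\Gamma(t,R_t;V(t),\cdot)$ solves the Kushner--Stratonovich equation (Lemma \ref{pequivrho}) and then invokes a uniqueness theorem adapted from Kurtz--Ocone (Theorem \ref{munique}) to conclude Corollary \ref{pdfU}; the collapse of the innovation term through $\int\rho_x\,dz=0$ is exactly the observation the paper exploits. Where you diverge is in Steps 1 and 3. The paper first transforms $(X,Z)$ to $(R,U)=(A(t,X_t),A(V(t),Z_t))$, writes $p_x/p=(z-x)/(V(t)-t)+h_x/h$ with $h=\Gamma/q$, then uses a Girsanov change of measure (legitimate because $h_x/h$ is uniformly bounded, Proposition \ref{hxlim}) to reduce the whole non-explosion-and-bridge question to the Gaussian case of Section \ref{gaussian}, where Proposition \ref{bpconv} proves $R_t\to U_1$ via a clean positive-supermartingale argument with the Lyapunov function $\varphi$. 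Your alternative --- solve the linear SDE for $D_t=R_t-U_t$ with integrating factor $\lambda^{-1}$ and run a Borel--Cantelli argument --- is plausible and closer to the Danilova approach the paper cites, but it is substantially messier than the supermartingale argument, and it raises a genuine gap in your write-up.

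The gap: you assert that the "bounded-variation remainder" in the $D$-equation is "controlled by the uniform bounds on $b$, $b_x$, $b_t$". This is not correct as stated. The remainder contains the term $h_x(t,R_t;V(t),U_t)/h(t,R_t;V(t),U_t)$, i.e.\ the deviation of $p_x/p$ from the Gaussian ratio $(U_t-R_t)/(V(t)-t)$, and its uniform boundedness on $[0,1]\times\bbR\times[0,1]\times\bbR$ is the content of the paper's Proposition \ref{hxlim}, which requires the Aronson bounds, a Girsanov/Feynman--Kac representation of $\Gamma$, and a delicate argument (Lemma \ref{derivativeN}) to differentiate under the conditional expectation. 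It does not follow merely from $b$, $b_x$, $b_t$ being bounded: those bounds enter the proof of Proposition \ref{hxlim}, but the boundedness of $h_x/h$ near $u=t$ (where both $\Gamma$ and $q$ degenerate) is a genuine technical fact. So both your Step 1 (no explosion) and your Step 3 (convergence of $D_t$, and even the identification of the leading $-D_t/(V(t)-t)$ drift) silently rely on this lemma; you should recognize it as a separate, nontrivial ingredient rather than as an automatic consequence of the stated hypotheses. A related small imprecision: in Step 1, after the transformation $A$ the drift of $R$ has linear growth because $p_x/p$ splits as above; but in the untransformed coordinates the factor $a^2(t,x)$ is not assumed bounded above, so "at most linear growth in $x$" is not immediate --- you should, as the paper does, carry out the existence argument in the $(R,U)$ coordinates.
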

We will give a proof of this result in the special case $a \equiv 1$ in Section \ref{gaussian} and a proof of the general result will be given  in Section \ref{general}. However, we shall now state and prove two lemmata to show how the choice of the drift term in (\ref{SDEx}) would imply condition {\bf C3}, i.e. $X$ as defined in (\ref{SDEx}) is a local martingale in its own filtration. Before we can formulate them,  we need to introduce the following notation.

Let $\cF:=\sigma(X_t,Z_t; t < 1)$ and define the probability measure $\bbP$ on $(\Om, \cF)$  by
\be \label{d:bbP}
\bbP(E)=\int_{\bbR} P^{0,z}(E) \mu(dz),
\ee
for any $E \in \cF$.  Of course, in order for this construction to make sense we need the existence of a solution to (\ref{SDEx}). This will be proved in Section \ref{general} in Corollary \ref{xexists}, thus, the above probability space exists and is well defined.  Under $\bbP$, $(X_t,Z_t)_{t < 1}$ would still be a strong Markov process (see Corollary \ref{xexists}). Let $\mathcal{N}$ be the null sets of $\bbP$. Proposition 2.7.7 in \cite{ks}  shows that  the filtration $(\mathcal{N} \vee \cF^{X,Z}_t)_{t <1}$ is right-continuous.  With an abuse of notation we shall still denote  the $\sigma$-algebra generated by $\cF$ and $\mathcal{N}$ with  $\cF$, and  denote $\mathcal{N} \vee \cF^Y_t$ with $\cF^Y_t$ for any $(\cF^{X,Z}_t)_{t \leq 1}$-adapted process $Y$. Next, let $\widetilde{\cF}^X_t:=\cap_{1>u>t}{\cF}^X_u$. We shall see in Remark \ref{X-sm} later that the $\cF^X$ is right-continuous, i.e. $\cF^X_t=\widetilde{\cF}^X_t$. We say that $g_t: \Om \times \bbR \mapsto \bbR$ is the conditional density of $Z_t$ given $\cF^X_t$, if $g_t$ is measurable with respect to the product $\sigma$-algebra, $\cF_t^X \times \mathcal{B}$ where $\mathcal{B}$ is the Borel $\sigma$-algebra of $\bbR$, and for any bounded measurable function $f$
\[
\bbE[f(Z_t)|\cF^X_t]=\int_{\bbR}f(z)g_t(\om, z)\,dz,
\]
where $\bbE$ is the expectation operator under $\bbP$. Note that due to Markov property of $(X,Z)$, $\bbE^{\bbQ}[f(Z_1)|\cF^X_t]=\bbE[f(Z_1)|\cF^X_t]$ but we will keep the above notation for the clarity of the exposition. We will often write $\bbP[Z_t \in dz| \cF^X_t]=g_t(\om,z)\,dz$ in order to refer to the conditional density property described above.  Now, we are ready to state and prove the following lemma.
\begin{lemma} \label{lmart} Suppose there exists a  unique strong solution of (\ref{SDEx}). If $\rho(t,X_t, \cdot)$ given by (\ref{d:rho}) is the conditional density of $Z_t$ given $\cF^X_t$ for every $t \in [0,1)$, then $(X_t)_{t \in [0,1)}$ is a local martingale in its own filtration.
\end{lemma}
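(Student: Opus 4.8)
The plan is to compute the $\cF^X$-semimartingale decomposition of $X$ directly and show the finite-variation part vanishes. Since $X$ solves (\ref{SDEx}), under $\bbP$ we have $X_t = \int_0^t a(s,X_s)\,dB_s + \int_0^t a^2(s,X_s)\frac{\rho_x(s,X_s,Z_s)}{\rho(s,X_s,Z_s)}\,ds$ for $t<1$. The first step is to project this onto $\cF^X$: by the general theory of filtering (the innovations approach), $X$ has an $\cF^X$-decomposition $X_t = M_t + \int_0^t \bbE\!\left[a^2(s,X_s)\frac{\rho_x(s,X_s,Z_s)}{\rho(s,X_s,Z_s)}\,\Big|\,\cF^X_s\right]ds$, where $M$ is an $\cF^X$-local martingale (the innovation process, suitably scaled). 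Here I would be slightly careful and note that since the diffusion coefficient $a(s,X_s)$ is $\cF^X_s$-measurable, the martingale part retains quadratic variation $\int_0^t a^2(s,X_s)\,ds$; what matters for {\bf C3} is only that the drift term, after conditioning, is zero.

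The second step is the key computation: using the hypothesis that $\rho(s,X_s,\cdot)$ is the conditional density of $Z_s$ given $\cF^X_s$, i.e. $\bbP[Z_s \in dz\mid \cF^X_s] = \rho(s,X_s,z)\,dz$, we get
\[
\bbE\!\left[a^2(s,X_s)\frac{\rho_x(s,X_s,Z_s)}{\rho(s,X_s,Z_s)}\,\Big|\,\cF^X_s\right]
= a^2(s,X_s)\int_{\bbR} \frac{\rho_x(s,X_s,z)}{\rho(s,X_s,z)}\,\rho(s,X_s,z)\,dz
= a^2(s,X_s)\int_{\bbR} \rho_x(s,X_s,z)\,dz.
\]
Now $\rho(s,x,z) = G(s,x;V(s),z)$ and by Proposition \ref{Gexists}, equation (\ref{nodrift}), $\int_{\bbR} G_x(s,x;u,z)\,dz = 0$ for $u \geq s$; applying this with $u = V(s) \geq s$ (which holds by Assumption \ref{AssZ}.1) gives $\int_{\bbR}\rho_x(s,X_s,z)\,dz = 0$. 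Hence the $\cF^X$-drift of $X$ is identically zero on $[0,1)$, so $X$ coincides with its $\cF^X$-local martingale part $M$ there, proving {\bf C3}.

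The main obstacle is making the projection/innovations step rigorous: one must justify that the $\cF^X$-optional projection of the drift is indeed obtained by the naive conditional expectation above (this requires that the drift be integrable enough and that $\cF^X$ be right-continuous, the latter flagged in the paper as Remark \ref{X-sm}), and one must confirm that the orthogonal-martingale part of the filtering decomposition genuinely contributes nothing — equivalently, that $M_t = X_t - \int_0^t(\text{drift})\,ds$ is a local martingale, which here is immediate once the drift is shown to vanish, since then $M = X$ and we need only that $X$ has a local-martingale part, i.e. that the stochastic integral $\int_0^t a(s,X_s)\,dB_s$ is well-defined, guaranteed by $a$ bounded away from $0$ and the existence of the strong solution. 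A secondary technical point is the interchange of $\bbE[\,\cdot\mid\cF^X_s]$ with the time integral, which follows from Fubini together with local integrability bounds on the drift coming from the Gaussian-type estimates (\ref{est0})–(\ref{est1}) for $\Gamma$ and $\Gamma_x$.
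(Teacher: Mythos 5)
Your proposal matches the paper's proof essentially verbatim: both project the SDE for $X$ onto $\cF^X$ via the innovations (filtering) theorem, pull $a^2(s,X_s)$ out of the conditional expectation, use the hypothesized conditional density to turn $\bbE[\rho_x/\rho\mid\cF^X_s]$ into $\int_{\bbR}\rho_x(s,X_s,z)\,dz$, and invoke Proposition \ref{Gexists} (equation (\ref{nodrift})) to conclude this vanishes. The paper cites Theorem 8.1 of Liptser--Shiryaev for the projection step, which is exactly the innovations decomposition you describe.
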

\begin{proof}
It follows from standard filtering theory (e.g. Theorem 8.1 in \cite{ls}) that
\[
dX_t= a(t,X_t)dB^X_t+ a^2(t,X_t)\bbE\left[\frac{\rho_x (t, X_t,
 Z_t)}{\rho(t,X_t,Z_t)}\bigg | \cF^X_t\right] dt,
\]
where $B^X$ is an $\cF^X-$Brownian motion. However, if $\rho(t, X_t, \cdot)$ is the conditional density of $Z_t$,
\[
 \bbE\left[\frac{\rho_x (t, X_t,
 Z_t)}{\rho(t,X_t,Z_t)}\bigg | \cF^X_t\right]=\int_{\bbR} G_x(t, X_t; V(t), z) dz=0
\]
due to Proposition \ref{Gexists}, so that
\[
dX_t= a(t,X_t)dB^X_t\]
and, thus, $X$ is a local martingale since $a$ is continuous.
\end{proof}\\

In view of this lemma we show in Section \ref{general} that $\rho(t,X_t, \cdot)$  is indeed the conditional density of $Z_t$ given $\cF^X_t$ for every $t \in [0,1)$. The following lemma will be key in proving this result.
\begin{lemma} \label{pequivrho} Suppose there exists a unique strong solution of (\ref{SDEx}). Let $U_t:=A(V(t),Z_t)$ and $R_t:=A(t,X_t)$, where $A$ is defined by (\ref{ef}). Define
\be \label{rhoviap}
p(t,x,z):=\rho(t, A^{-1}(t,x), A^{-1}(V(t),z)) a(V(t), A^{-1}(V(t),z)),
\ee
where $\rho$ is given by (\ref{d:rho}). Then,
\begin{enumerate}
\item $p(t, R_t, \cdot)$ is the conditional density of $U_t$ given $\cF^R_t$ iff $\rho(t,X_t, \cdot)$  is  the conditional density of $Z_t$ given $\cF^X_t$.
\item $(p(t,R_t, \cdot))_{t \in [0,1)}$ is a weak solution to the following stochastic PDE:
\bea
\label{spde0}
&& g_t(z)= \Gamma(0,0;c,z)+ \int_0^t \sigma^2(s) \left\{-(b(V(s),z) g_s(z))_z + \half (g_s(z))_{zz}\right\} ds \\
&& +\int_0^t   g_s(z) \left(\frac{p_x (s, R_s,
 z)}{p(s,R_s,z)}- \int_{\bbR} g_s(z) \frac{p_x (s, R_s,
 z)}{p(s,R_s,z)}dz\right)dI^g_s,  \nn
 \eea
 where
 \[
dI^g_s= dR_s - \left(\int_{\bbR}\left[\frac{p_x}{p}(s, R_s,z)+b(s,R_s)\right] g_s(z)dz\right)ds.
\]
\end{enumerate}
\end{lemma}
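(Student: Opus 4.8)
The plan is to treat the two parts separately, deducing the first by a change of variables and the second by combining the Kushner--Stratonovich equation with the dynamics of the transformed signal. For part (1), I would start from the defining property of conditional densities. Since $R_t=A(t,X_t)$ and $U_t=A(V(t),Z_t)$ with $A$ strictly increasing and $C^{1,2}$ (by Assumption \ref{AssZ}.4 and \ref{AssZ}.5, $a>\epsilon$ so $A$ is a diffeomorphism in the space variable for each fixed time), we have $\cF^R_t=\cF^X_t$. For a bounded measurable $f$,
\[
\bbE[f(U_t)\mid \cF^R_t]=\bbE[f(A(V(t),Z_t))\mid \cF^X_t]=\int_{\bbR}f(A(V(t),z))\,\rho(t,X_t,z)\,dz,
\]
provided $\rho(t,X_t,\cdot)$ is the conditional density of $Z_t$; substituting $w=A(V(t),z)$, hence $dw=dz/a(V(t),z)$, i.e. $z=A^{-1}(V(t),w)$ and $dz=a(V(t),A^{-1}(V(t),w))\,dw$, turns this into $\int_{\bbR}f(w)\,p(t,R_t,w)\,dw$ with $p$ exactly as in (\ref{rhoviap}) (using also $X_t=A^{-1}(t,R_t)$). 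The converse is the same computation read backwards. So part (1) is a bookkeeping exercise in the change-of-variables formula; no analytic subtlety beyond checking measurability of $p(t,R_t,\cdot)$, which follows from the regularity of $\Gamma$ in Proposition \ref{Gexists}.

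For part (2), the strategy is: (a) write down the semimartingale dynamics of $U_t=A(V(t),Z_t)$ under $\bbP$; (b) apply the general nonlinear filtering equation (Kushner--Stratonovich, e.g. Theorem 8.1 and the filtering equations in \cite{ls}) to the signal $U$ with observation $R$; (c) identify the conditional density $g_s$ of $U_s$ given $\cF^R_s$ with $p(s,R_s,\cdot)$ using part (1), and recognize the generator of $U$ and the innovation process. For step (a): from (\ref{SDEsignal}), $dZ_t=\sigma(t)a(V(t),Z_t)\,d\beta_t$, and It\^o's formula applied to $A(V(t),Z_t)$ gives, writing $A_t$ for the time-derivative and recalling $V'(t)=\sigma^2(t)$,
\[
dU_t=\frac{1}{a(V(t),Z_t)}\,dZ_t+\Big(\sigma^2(t)A_t(V(t),Z_t)-\tfrac12\sigma^2(t)a_z(V(t),Z_t)\Big)dt=\sigma(t)\,d\beta_t+\sigma^2(t)\,b(V(t),U_t)\,dt,
\]
by the definition (\ref{bee}) of $b$; so $U$ is a (time-changed) diffusion with generator $\mathcal L_s h(z)=\sigma^2(s)\big(b(V(s),z)h'(z)+\tfrac12 h''(z)\big)$, whose formal adjoint produces exactly the drift term $\sigma^2(s)\{-(b(V(s),z)g_s(z))_z+\tfrac12(g_s(z))_{zz}\}$ appearing in (\ref{spde0}). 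For step (b): from (\ref{SDEx}) and It\^o, $dR_t=dB^{X,Z}_t+(\tfrac{\rho_x}{\rho}\,a)(\ldots)dt+(\text{terms from }A)$; more directly, since $R_t=A(t,X_t)$ and $X$ has diffusion coefficient $a$ with drift $a^2\rho_x/\rho$, the same Itô computation as above yields $dR_t=dw_t+\big(\frac{p_x}{p}(t,R_t,Z_t\text{-argument})+b(t,R_t)\big)dt$ for a Brownian motion $w$ — here the point is that the drift $a^2\rho_x/\rho$ expressed in the $R$-coordinate becomes $p_x/p$, which is precisely what makes the innovation $dI^g_s=dR_s-(\int[\frac{p_x}{p}(s,R_s,z)+b(s,R_s)]g_s(z)dz)ds$ an $\cF^R$-Brownian motion. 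The correction term with coefficient $g_s(z)\big(\frac{p_x}{p}(s,R_s,z)-\int g_s \frac{p_x}{p}\big)$ is the standard ``gain'' term of the filtering equation, being the conditional covariance of the signal density with the observation drift. The initial condition $g_0=p(0,R_0,\cdot)=\Gamma(0,0;c,\cdot)$ follows from $X_0=0$, $\mu(dz)=G(0,0;c,z)dz$, and the definitions (\ref{GviaGamma}), (\ref{d:rho}), (\ref{rhoviap}).

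The main obstacle is step (b): justifying the applicability of the filtering equations in \cite{ls} in this non-classical setup, where the ``signal'' $U$ and the ``observation'' $R$ are driven by \emph{independent} Brownian motions $\beta$ and $B$ but the observation drift $\frac{p_x}{p}(s,R_s,U_s)$ depends on the signal in a genuinely nonlinear (not merely additive-noise) way, and the observation $R$ appears in its own drift. One must verify the integrability conditions (finiteness of $\bbE\int_0^t(\frac{p_x}{p})^2\,ds$ on $[0,T]$, $T<1$), which will rely on the Gaussian-type bounds (\ref{est0})--(\ref{est1}) on $\Gamma$ and $\Gamma_x$ together with Assumption \ref{AssZ}.2 controlling the blow-up rate as $t\uparrow1$; away from $t=1$ these bounds are uniform, so the difficulty is only near the terminal time, and on $[0,T]$ with $T<1$ it does not arise. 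A secondary subtlety is that (\ref{spde0}) is asserted only as a \emph{weak} solution of the SPDE, so it suffices to establish the equation tested against smooth compactly supported functions $\phi$, i.e. to show $t\mapsto\int_{\bbR}\phi(z)g_t(z)dz$ satisfies the integrated form — this sidesteps any need for spatial regularity of $g_t$ beyond what part (1) and Proposition \ref{Gexists} already give, and reduces the proof to It\^o's formula for $\bbE[\phi(U_t)\mid\cF^R_t]$ plus the manipulation of the generator and its adjoint via integration by parts.
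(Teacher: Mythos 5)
Your treatment of part (1) is exactly the paper's: a change of variables in the conditional expectation using the strict monotonicity of $A(t,\cdot)$ and the fact that $\cF^R_t=\cF^X_t$. No issues there.

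Your plan for part (2), however, contains a circularity that would make it fail. You propose to (b) apply the Kushner--Stratonovich filtering equation to obtain the SPDE satisfied by the conditional density $g_s$ of $U_s$ given $\cF^R_s$, and then (c) ``identify $g_s$ with $p(s,R_s,\cdot)$ using part (1).'' But part (1) is an \emph{iff} between two conditional-density statements, neither of which has been established at that point; it gives you no way to identify the true conditional density with $p(t,R_t,\cdot)$. Indeed, proving that $\rho(t,X_t,\cdot)$ (equivalently $p(t,R_t,\cdot)$) \emph{is} the conditional density is precisely the goal of the later Corollary \ref{pdfU}, which relies on Lemma \ref{pequivrho}(2) together with the uniqueness Theorem \ref{munique}. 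So you cannot use that conclusion as an ingredient in proving Lemma \ref{pequivrho}(2).

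The statement to be proven is purely about the explicit process $t\mapsto p(t,R_t,\cdot)$, and the paper proves it by direct computation, not by filtering theory. The key observation you are missing is that by (\ref{GviaGamma}) and (\ref{rhoviap}) one has $p(t,x,z)=\Gamma(t,x;V(t),z)$, and $\Gamma(t,x;u,z)$ solves both the Kolmogorov forward equation (\ref{eq:pde1}) in $(u,z)$ and the backward/adjoint equation (\ref{eq:adj1}) in $(t,x)$ (Proposition \ref{Gexists}). Since $u=V(t)$ with $V'=\sigma^2$, the chain rule combines the two into the single PDE (\ref{pderho}):
\[
p_t(t,x,z)+b(t,x)p_x(t,x,z)+\tfrac12 p_{xx}(t,x,z)=-\sigma^2(t)\big(b(V(t),z)p(t,x,z)\big)_z+\tfrac12\sigma^2(t)p_{zz}(t,x,z).
\]
One then applies It\^o's formula to $t\mapsto p(t,R_t,z)$ for fixed $z$ using the dynamics $dR_t=dB_t+\big(\tfrac{p_x}{p}(t,R_t,U_t)+b(t,R_t)\big)dt$, and the left-hand side of (\ref{pderho}) absorbs the drift $b(t,R_t)dt$ plus the It\^o correction, leaving exactly
\[
p(t,R_t,z)=\Gamma(0,0;c,z)+\int_0^t\sigma^2(s)\Big\{-\big(b(V(s),z)p(s,R_s,z)\big)_z+\tfrac12\big(p(s,R_s,z)\big)_{zz}\Big\}ds+\int_0^t p_x(s,R_s,z)\big[dR_s-b(s,R_s)ds\big].
\]
Finally, (\ref{nodrift}) gives $\int_{\bbR}p_x(s,R_s,z)\,dz=0$, so when $g_s(z)=p(s,R_s,z)$ the innovation reduces to $dI^g_s=dR_s-b(s,R_s)\,ds$ and the last integrand matches the gain term in (\ref{spde0}). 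This is a self-contained verification with no appeal to what the true filter is, which is exactly what the logical structure of the paper requires.
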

\begin{proof}
Notice that since $A(t, \cdot)$ is strictly increasing, $\cF^R_t=\cF^X_t$ for every $t \in [0,1)$ and there is a one-to-one correspondence between the conditional density of $Z$  and that of $U$. More precisely,
\[
\bbP[Z_t \in dz |\cF^X_t]=\bbP[U_t \in dA(V(t),z)|\cF^R_t].
\]
Thus, if $\bbP[Z_t \in dz |\cF^X_t]=\rho(t, X_t,z)\,dz$, then
\bean
\bbP[U_t \in dz |\cF^R_t]&=&\bbP[Z_t \in dA^{-1}(V(t),z)|\cF^X_t] \\
&=&\rho(t, X_t, A^{-1}(V(t),z)) dA^{-1}(V(t),z) \\
&=&\rho(t, X_t, A^{-1}(V(t),z))a(V(t), A^{-1}(V(t),z))\, dz\\
&=&\rho(t, A^{-1}(t,R_t), A^{-1}(V(t),z))a(V(t), A^{-1}(V(t),z))\,dz\\
&=&p(t,R_t,z)dz
\eean by (\ref{rhoviap}). The reverse implication can be proved similarly.

In order to prove the second assertion observe that due to (\ref{GviaGamma}) and (\ref{rhoviap}) we have
\be \label{pviaGamma}
p(t,x,z)=\Gamma(t,x;V(t),z).
\ee
We have seen in Proposition \ref{Gexists} that $\Gamma(t,x;u,z)$ solves (\ref{eq:pde1}) for fixed $(t,x)$ and it also solves (\ref{eq:adj1}) for fixed $(u,z)$. Combining these two facts yields that $p$ satisfies
 \be \label{pderho}
 p_t(t,x,z) + b(t,x) p_x(t,x,z) +\half p_{xx}(t,x,z)= -\sigma^2(t) (b(V(t),z) p(t,x,z))_z + \half \sigma^2(t) p_{zz}(t,x,z).
\ee
Using It\^{o}'s formula and (\ref{pderho}), we get
\bean
p(t,R_t, z)&=&\Gamma(0,0;c,z)+ \int_0^t \sigma^2(s) \left\{-(b(V(s),z) p(s,R_s,z))_z + \half (p(s,R_s,z))_{zz}\right\} ds \\
&&+\int_0^t p_x(s,R_s, z) [dR_s - b(s,R_s) ds]
\eean
Due to (\ref{nodrift}), $dI^g_s= dR_s - b(s,R_s)ds$ when $g_t(z) =p(t,R_t, z)$. By repeating this argument we arrive at the desired conclusion.
\end{proof}\\

Before we give a proof of Theorem \ref{mainresult}, we will first investigate the Gaussian case, i.e. $a \equiv 1$.

\section{Gaussian case} \label{gaussian}

Under the assumption $a \equiv 1$, $Z$ becomes a Gaussian martingale and its transition density is given by $G(t,x;u,z)=\frac{1}{\sqrt{2 \pi (u-t)}}\exp(-\frac{(x-z)^2}{2 (u-t)})$ since it is a time-changed Brownian motion where the time-change is deterministic.  In
this case, the equation (\ref{SDEx}) reduces to
\be \label{sde:g}
dX_t=dB_t+ \frac{Z_t-X_t}{V(t)-t}dt.
\ee
This equation along with various properties of its
solution is discussed in Danilova \cite{Danilova}, F\"ollmer, et al.
\cite{fwy} and Wu \cite{Wu}.
\begin{theorem} \label{t:gaussian} Suppose $a\equiv 1$ and $\rho$ is given by (\ref{d:rho}). Then, Theorem \ref{mainresult} holds.
\end{theorem}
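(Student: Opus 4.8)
The plan is to specialize every ingredient of Theorem \ref{mainresult} to the case $a\equiv 1$, where the construction collapses to explicit Gaussian computations. First I would record that the transition kernel of $\xi$ is the heat kernel $G(t,x;u,z)=(2\pi(u-t))^{-1/2}\exp(-(x-z)^2/(2(u-t)))$, so $\rho(t,x,z)=G(t,x;V(t),z)$ is the density of the $N(x,V(t)-t)$ law, and the SDE (\ref{SDEx}) reduces to the linear equation (\ref{sde:g}). For the existence/uniqueness assertion and \textbf{C1}: $(X,Z)$ jointly solves the two--dimensional system $dX_t=dB_t+\frac{Z_t-X_t}{V(t)-t}dt$, $dZ_t=\sigma(t)\,d\beta_t$ driven by the independent pair $(B,\beta)$, with $X_0=0$ and $Z_0\sim\mu=N(0,c)$. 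By Assumption \ref{AssZ}.1 the map $t\mapsto (V(t)-t)^{-1}$ is continuous and bounded on each $[0,T]$, $T<1$, and $\sigma$ is bounded (Assumption \ref{AssZ}.3), so the coefficients are globally Lipschitz in $(x,z)$, uniformly on $[0,T]$; hence a unique strong solution exists there (and, $X$ being linear in itself, one even has the closed form $X_t=\lambda(t)\int_0^t\lambda(s)^{-1}(dB_s+\frac{Z_s}{V(s)-s}\,ds)$). The usual flow property then makes $(\Omega,\cG,(\cG_t),(X_t,Z_t),(P^{x,z}))$ a Markov family with initial law $\delta_0\otimes\mu$, which is \textbf{C1} with $\alpha(t,x,z)=(z-x)/(V(t)-t)$.

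Next I would establish \textbf{C3} via linear filtering. By Lemma \ref{lmart} it suffices to show that under $\bbP$ the $\cF^X_t$--conditional law of $Z_t$ is $N(X_t,V(t)-t)$, i.e.\ that $\rho(t,X_t,\cdot)$ is the conditional density of $Z_t$. Since $Z_0$ is Gaussian and the dynamics are linear, $(X,Z)$ is a jointly Gaussian process, so $Z_t\mid\cF^X_t$ is Gaussian with mean $m_t=\bbE[Z_t\mid\cF^X_t]$ and a \emph{deterministic} variance $\gamma_t$; by the Kalman--Bucy equations (\cite{ls}),
\be
\dot\gamma_t=\sigma^2(t)-\frac{\gamma_t^2}{(V(t)-t)^2},\quad\gamma_0=c,\qquad dm_t=\frac{\gamma_t}{V(t)-t}\left(dX_t-\frac{m_t-X_t}{V(t)-t}\,dt\right).
\ee
Plugging in the guess $\gamma_t=V(t)-t$ makes both sides of the Riccati equation equal $\sigma^2(t)-1$ and matches $\gamma_0=V(0)=c$; since its right-hand side is locally Lipschitz in $\gamma$ on $[0,T]$ this is the unique solution. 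Then $dm_t=dX_t-\frac{m_t-X_t}{V(t)-t}\,dt$, i.e.\ $d(m_t-X_t)=-\frac{m_t-X_t}{V(t)-t}\,dt$ with $m_0-X_0=\bbE[Z_0]=0$, forcing $m_t\equiv X_t$. Hence $\bbP[Z_t\in dz\mid\cF^X_t]=\rho(t,X_t,z)\,dz$ and Lemma \ref{lmart} yields \textbf{C3}. (Since here $A(t,x)=x$ and $b\equiv0$, one could instead check that the Gaussian family solves the filtering SPDE (\ref{spde0}); the route above is shorter.)

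For \textbf{C2} I would work under each $P^{0,z}$ (so $Z_0=z$) and study the error process $Y_t:=Z_t-X_t$, which satisfies $dY_t=\sigma(t)\,d\beta_t-dB_t-\frac{Y_t}{V(t)-t}\,dt$, $Y_0=z$. Since $\lambda'(t)=-\lambda(t)/(V(t)-t)$, the integrating factor $\lambda(t)^{-1}$ gives $d(Y_t/\lambda(t))=\lambda(t)^{-1}(\sigma(t)\,d\beta_t-dB_t)$, hence
\be
Y_t=\lambda(t)\big(z+M_t\big),\qquad M_t:=\int_0^t\lambda(s)^{-1}\big(\sigma(s)\,d\beta_s-dB_s\big),
\ee
where $M$ is a continuous local martingale with $\langle M\rangle_t=\int_0^t(1+\sigma^2(s))\lambda(s)^{-2}\,ds=\Lambda(t)$. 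As $\int_0^1\sigma^2=1-c<\infty$, $Z_t\to Z_1:=\lim_{t\uparrow1}Z_t$ exists $P^{0,z}$-a.s., and $\lambda(t)\to0$ (Remark \ref{rem:lambda_to_0}). If $\Lambda(1)<\infty$, $M$ converges a.s.\ and $Y_t\to0$. If $\Lambda(1)=\infty$, write $M_t=W_{\Lambda(t)}$ for a Brownian motion $W$ (Dambis--Dubins--Schwarz); the law of the iterated logarithm gives $|W_{\Lambda(t)}|\le 2\sqrt{2\,\Lambda(t)\log\log\Lambda(t)}$ a.s.\ for $t$ near $1$, so
\be
\lambda(t)|M_t|\le 2\sqrt{2}\,\sqrt{\lambda^2(t)\Lambda(t)\log\log\Lambda(t)}\le 2\sqrt{2}\,\sqrt{\lambda^2(t)\Lambda(t)\log\Lambda(t)}\longrightarrow 0
\ee
as $t\uparrow1$, by Assumption \ref{AssZ}.\ref{tomodify}; with $\lambda(t)z\to0$ this again gives $Y_t\to0$ a.s. In both cases $\lim_{t\uparrow1}X_t=\lim_{t\uparrow1}(Z_t-Y_t)=Z_1$ $P^{0,z}$-a.s., which is \textbf{C2}.

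I expect the only genuinely delicate step to be \textbf{C2} in the regime $\Lambda(1)=\infty$: one must balance the a.s.\ pathwise growth of the time-changed Brownian motion $M$ against the decay of $\lambda$, and Assumption \ref{AssZ}.\ref{tomodify} is exactly what makes this work. A secondary, purely bookkeeping, point is the justification of the jointly-Gaussian filtering step---right-continuity and completeness of $\cF^X$ and well-definedness of $\bbP$---which is supplied by the general apparatus set up before the theorem.
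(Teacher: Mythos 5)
Your proposal is correct.  For {\bf C1} and {\bf C3} you follow essentially the same route as the paper: existence, uniqueness and the Markov property from Lipschitz estimates on $(z-x)/(V(t)-t)$ on each $[0,T]$, $T<1$, and then Kalman--Bucy filtering to identify the $\cF^X_t$-conditional law of $Z_t$ with $N(X_t,V(t)-t)$, after which Lemma \ref{lmart} finishes.  (You write the Riccati equation as $\dot\gamma_t=\sigma^2(t)-\gamma_t^2/(V(t)-t)^2$ whereas the paper's equation (\ref{e:cvar}) reads $\dot\gamma_t=\sigma^2(t)-\gamma_t/(V(t)-t)$; yours is the standard Kalman--Bucy form, and in any case both are solved by $\gamma_t=V(t)-t$ with $\gamma_0=c$, so the discrepancy is harmless.)

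The proof of {\bf C2} is where you genuinely diverge.  The paper constructs the positive supermartingale
$\varphi(t,X_t,Z_t)$ with $\varphi(t,x,z)=\tfrac{1}{\sqrt{2(\Lambda(t)+\ell)}}\exp\bigl(\tfrac{(x-z)^2}{2\lambda^2(t)(\Lambda(t)+\ell)}\bigr)$, shows $\varphi(t,x,z)\to\infty$ as $t\uparrow 1$ whenever $x\neq z$, and then invokes the supermartingale convergence theorem and Fatou to conclude $P^{0,z}(\lim X_t\neq Z_1)=0$.  You instead solve the linear SDE for $Y_t:=Z_t-X_t$ explicitly, writing $Y_t=\lambda(t)(z+M_t)$ with $\langle M\rangle_t=\Lambda(t)$, then apply Dambis--Dubins--Schwarz together with the law of the iterated logarithm to force $\lambda(t)M_t\to0$ under Assumption \ref{AssZ}.\ref{tomodify}.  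Both arguments pivot on the same quantitative condition $\lambda^2(t)\Lambda(t)\log\Lambda(t)\to0$; your pathwise argument makes the role of that condition completely transparent and avoids cooking up a Lyapunov function, while the paper's supermartingale gives a uniform recipe that they reuse essentially verbatim in the Ornstein--Uhlenbeck example and, via Girsanov, in the non-Gaussian case (Proposition \ref{gconv}).  Your LIL argument would also transfer to the general case through the same change of measure, since it only needs the Gaussian system obtained after Girsanov, so there is no loss of scope --- it is just a less ``portable'' piece of machinery.  One minor point of bookkeeping: you should also note that under each $P^{0,z}$ the limit $Z_1=\lim_{t\uparrow1}Z_t$ exists a.s.\ (you do, via $\int_0^1\sigma^2<\infty$), a step the paper never states explicitly but also relies on.
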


The proof of the above theorem will be done in several steps, first of which being the following proposition.
\begin{proposition} \label{c1:g} There exists a unique strong solution to (\ref{sde:g}) over [0,1). Moreover, $((X_t,Z_t))_{t \in [0,1)}$ is strong Markov.
\end{proposition}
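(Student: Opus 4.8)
The plan is to establish existence and uniqueness of a strong solution to \eqref{sde:g} on every interval $[0,T]$ with $T<1$ first, and then patch these solutions together to obtain a solution on $[0,1)$. Fix $T<1$. On $[0,T]$ the drift coefficient $(t,x)\mapsto \frac{Z_t(\omega)-x}{V(t)-t}$ is, for $\bbQ$-a.e.\ $\omega$, jointly measurable, affine (hence Lipschitz) in $x$ with Lipschitz constant $\frac{1}{V(t)-t}$, which is bounded on $[0,T]$ by Assumption \ref{AssZ}.1 (since $V(t)-t$ is continuous and strictly positive on $[0,T]$, it is bounded away from zero there). The process $Z$ has continuous paths, so $t\mapsto \frac{Z_t}{V(t)-t}$ is a.s.\ continuous on $[0,T]$, hence locally bounded. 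Therefore the standard existence-and-uniqueness theorem for SDEs with random, time-dependent Lipschitz coefficients (e.g.\ a Picard iteration argument, or Theorem 2.9 combined with Problem 2.10 in Karatsuba-Shreve \cite{ks}, applied $\omega$-wise on the product space) yields a unique strong solution $X$ on $[0,T]$, adapted to $\cF^{B,Z}$. Letting $T\uparrow 1$ along a sequence and using pathwise uniqueness to glue the solutions, we obtain a unique strong solution on $[0,1)$.

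Next I would verify the strong Markov property of the pair $(X,Z)$. Since $Z$ solves \eqref{SDEsignal} driven by $\beta$, and $X$ solves \eqref{sde:g} driven by $B$, with $B$ and $\beta$ independent, the pair $(X,Z)$ is the unique strong solution on $[0,1)$ of the two-dimensional SDE
\[
d\begin{pmatrix} X_t \\ Z_t\end{pmatrix} = \begin{pmatrix} 1 & 0 \\ 0 & \sigma(t) a(V(t),Z_t)\end{pmatrix} d\begin{pmatrix} B_t \\ \beta_t\end{pmatrix} + \begin{pmatrix} \frac{Z_t-X_t}{V(t)-t} \\ 0 \end{pmatrix} dt,
\]
with initial law $\delta_0\otimes\mu$. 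The coefficients of this system are, on each $[0,T]$ with $T<1$, locally Lipschitz in $(x,z)$ with the drift Lipschitz constant uniformly bounded (again because $V(t)-t$ is bounded below on $[0,T]$) and the diffusion coefficient Lipschitz by Assumption \ref{AssZ}.5; they are also jointly measurable in $(t,x,z)$. Uniqueness in law for such a time-inhomogeneous SDE on $[0,T]$ gives, by the usual argument, that $(X,Z)$ is a (time-inhomogeneous, hence strong after the standard space-time augmentation) Markov process on $[0,T]$ for every $T<1$, and therefore on $[0,1)$. Concretely, one shows that for $t<1$ and a bounded measurable $\phi$ on $\bbR^2$, $\bbE[\phi(X_u,Z_u)\mid \cF^{X,Z}_t]$ depends only on $(X_t,Z_t)$ by conditioning on $\cF^{X,Z}_t$, using the independence of the increments of $(B,\beta)$ after $t$ from $\cF^{X,Z}_t$, and invoking uniqueness of the solution started afresh at time $t$ from $(X_t,Z_t)$; the strong Markov property then follows from the Feller-type continuity of $(x,z)\mapsto P^{x,z}$ in the solution (continuous dependence on initial data for Lipschitz SDEs) together with the right-continuity of paths, in the usual way.

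The main obstacle is purely the behaviour at the terminal time $t=1$: the Lipschitz constant $\frac{1}{V(t)-t}$ blows up as $t\uparrow 1$ because $V(1)=1$, so none of the standard theorems apply on the closed interval $[0,1]$, and one genuinely only obtains a solution on the half-open interval $[0,1)$ — which is exactly what the proposition claims. Continuous extendability of $X$ to $t=1$ with $X_1=Z_1$ (condition {\bf C2}) is a separate and more delicate matter, not asserted here, and it is where Assumption \ref{AssZ}.2 will be needed later. For the present proposition the only care required is to make the Lipschitz/measurability bookkeeping uniform on each compact subinterval $[0,T]\subset[0,1)$; everything else is routine.
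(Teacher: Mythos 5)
Your proposal is correct and follows essentially the same route as the paper: exploit the fact that on each $[0,T]$ with $T<1$ the drift $(t,x)\mapsto \frac{Z_t-x}{V(t)-t}$ is (random-coefficient) Lipschitz in $x$ with constant bounded by $\sup_{t\le T}\frac{1}{V(t)-t}<\infty$, invoke standard Lipschitz SDE theory (the paper cites Protter, Ch.~V, Thm.~38; you use Karatzas--Shreve/Picard iteration) for existence and pathwise uniqueness on $[0,T]$, glue as $T\uparrow 1$, and then obtain the strong Markov property of the pair $(X,Z)$ from the two-dimensional Lipschitz SDE formulation (the paper cites Karatzas--Shreve, Thm.~5.4.20; your argument unpacks what is behind that reference). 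The only cosmetic issue is the typo ``Karatsuba-Shreve'' for ``Karatzas--Shreve.''
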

\begin{proof}
Since $\frac{z-x}{V(t)-t}$ is Lipschitz over any $[0,T]$ for $T <1$,
there exists a unique strong solution to the above equation with
$X_0=0$ by Theorem 38 of Chap. V in \cite{Protter}.  Moreover, Theorem 5.4.20 in \cite{ks} yields $(X,Z)$ has strong Markov property.
\end{proof}\\

The above proposition shows that condition {\bf C1} of the bridge construction is satisfied. We next show that the solution to (\ref{sde:g}) satisfies condition {\bf C2} and then conclude this section with a proof of Theorem \ref{t:gaussian}.
\begin{lemma}
Let Assumption \ref{AssZ} hold. Let $\lambda (t)=\exp \left\{
-\int_{0}^{t}\frac{1}{V(s)-s}ds\right\} $ and $\Lambda
(t)=\int_{0}^{t}\frac{1+\sigma^2(s)}{\lambda^{2}(s)}ds$ be as in Assumption \ref{AssZ}, and $\ell >0$ be the associated constant
in Assumption \ref{AssZ}. Define
\be \varphi(t,x,z)=\frac{1}{\sqrt{2(\Lambda
(t)+\ell)}}e^{\frac{(x-z)^2}{2 \lambda^2(t)(\Lambda(t)+\ell)}}
\ee
Then, $(\varphi(t,X_t,Z_t))_{t\in [0,1)}$ is a positive supermartingale and  \be
\label{zero_varphi} \lim_{t\uparrow 1} \varphi (t,x,z) =
+\infty,\quad x\neq z \ee \end{lemma}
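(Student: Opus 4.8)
\textit{Proof proposal.} The plan is to collapse the two‑dimensional process $(X,Z)$ onto the scalar difference $Y_t:=X_t-Z_t$ and to recognise $\varphi(t,X_t,Z_t)$ as a space–time harmonic functional of a deterministically time‑changed Brownian motion, after which the supermartingale property is essentially automatic and the blow‑up is a deterministic computation.

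First I would record the dynamics of $Y$. From the SDE (\ref{sde:g}) one has $dX_t=dB_t-\frac{Y_t}{V(t)-t}\,dt$, and from (\ref{SDEsignal}) with $a\equiv 1$ one has $dZ_t=\sigma(t)\,d\beta_t$; hence $Y$ solves the linear SDE $dY_t=dN_t-\frac{Y_t}{V(t)-t}\,dt$, where $N_t:=B_t-\int_0^t\sigma(s)\,d\beta_s$ is a continuous martingale with $\la N\ra_t=\int_0^t(1+\sigma^2(s))\,ds$ (using $B\perp\beta$). Since $\lambda'(t)=-\lambda(t)/(V(t)-t)$, the deterministic integrating factor $\lambda(t)^{-1}$ turns this into $d\big(Y_t/\lambda(t)\big)=\lambda(t)^{-1}\,dN_t$, whence $Y_t=\lambda(t)\,(Y_0+M_t)$ with $M_t:=\int_0^t\lambda(s)^{-1}\,dN_s$ a continuous $(\cG_t)$-local martingale satisfying $\la M\ra_t=\int_0^t\frac{1+\sigma^2(s)}{\lambda^2(s)}\,ds=\Lambda(t)$, and $Y_0=-Z_0$ because $X_0=0$. (Since $N$, and hence $M$, is also $(\cF^{X,Z}_t)$-adapted, everything below holds verbatim for that filtration too.)

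Substituting, the exponent in $\varphi$ becomes $\frac{Y_t^2}{2\lambda^2(t)(\Lambda(t)+\ell)}=\frac{(Y_0+M_t)^2}{2(\Lambda(t)+\ell)}$, so $\varphi(t,X_t,Z_t)=\frac1{\sqrt2}\,\Psi\big(\Lambda(t),Y_0+M_t\big)$ with $\Psi(s,w):=(s+\ell)^{-1/2}\exp\big(\frac{w^2}{2(s+\ell)}\big)$. A one‑line computation gives $\Psi_s+\half\Psi_{ww}\equiv0$ on $[0,\infty)\times\bbR$ (morally, $\Psi$ is a Gaussian kernel with ``negative variance''). Since $\Lambda$ is absolutely continuous with $d\Lambda_t=d\la M\ra_t$, Itô's formula on each $[0,T]$, $T<1$, yields $d\Psi(\Lambda_t,Y_0+M_t)=\big(\Psi_s+\half\Psi_{ww}\big)(\Lambda_t,Y_0+M_t)\,d\Lambda_t+\Psi_w(\Lambda_t,Y_0+M_t)\,dM_t=\Psi_w(\Lambda_t,Y_0+M_t)\,dM_t$, so $\varphi(\cdot,X_\cdot,Z_\cdot)$ is a nonnegative continuous $(\cG_t)$-local martingale on $[0,1)$. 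As $\varphi(0,X_0,Z_0)=(2\ell)^{-1/2}e^{Z_0^2/(2\ell)}$ is integrable (here $Z_0\sim\mu$ is centred Gaussian of variance $c<\ell$), a nonnegative local martingale with integrable initial value is a genuine supermartingale by Fatou; hence $\varphi(\cdot,X_\cdot,Z_\cdot)$ is a positive supermartingale.

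It remains to prove the deterministic claim (\ref{zero_varphi}): $\lim_{t\uparrow1}\varphi(t,x,z)=+\infty$ for fixed $x\neq z$. By Remark \ref{rem:lambda_to_0}, $\lambda(t)\to0$, and $\lambda^2(t)\Lambda(t)\to0$ — trivially if $\Lambda(1-):=\lim_{t\uparrow1}\Lambda(t)<\infty$, and by Assumption \ref{AssZ}.\ref{tomodify} otherwise. If $\Lambda(1-)<\infty$ the prefactor $(2(\Lambda(t)+\ell))^{-1/2}$ stays bounded away from $0$ while $\frac{(x-z)^2}{2\lambda^2(t)(\Lambda(t)+\ell)}\to+\infty$, so $\varphi\to+\infty$. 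If $\Lambda(1-)=\infty$, write $\log\varphi(t,x,z)=-\half\log\big(2(\Lambda(t)+\ell)\big)+\frac{(x-z)^2}{2\lambda^2(t)(\Lambda(t)+\ell)}$; since $\frac1{\lambda^2(t)(\Lambda(t)+\ell)}\ge\frac12\,\frac1{\lambda^2(t)\Lambda(t)}$ eventually and $\frac1{\lambda^2(t)\Lambda(t)\log\Lambda(t)}\to\infty$ by Assumption \ref{AssZ}.\ref{tomodify}, the positive term is of strictly larger order than $\half\log\Lambda(t)$, so $\log\varphi(t,x,z)\to+\infty$. This competition between the vanishing prefactor $\Lambda(t)^{-1/2}$ and the exploding exponential — which is precisely what Assumption \ref{AssZ}.\ref{tomodify} is tailored to control — is the only genuinely delicate point; the rest is bookkeeping.
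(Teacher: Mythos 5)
Your argument is correct, but it takes a genuinely different route than the paper. The paper proves the supermartingale property by checking directly that $\varphi$ satisfies the parabolic PDE $\varphi_t+\frac{z-x}{V(t)-t}\varphi_x+\frac12\varphi_{xx}+\frac{\sigma^2(t)}{2}\varphi_{zz}=0$ (i.e.\ that $\varphi$ is annihilated by the generator of $(t,X,Z)$) and then invokes It\^o's formula. You instead collapse the two-dimensional state to the scalar difference $Y=X-Z$, solve its linear SDE explicitly via the integrating factor $\lambda^{-1}$ to obtain $Y_t=\lambda(t)(Y_0+M_t)$ with $\langle M\rangle_t=\Lambda(t)$, and then recognize $\varphi$ as a one-dimensional space--time harmonic function $\Psi(s,w)=(s+\ell)^{-1/2}e^{w^2/(2(s+\ell))}$ evaluated along the deterministically time-changed martingale $(\Lambda(t),Y_0+M_t)$. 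This is more conceptual: it explains where the otherwise mysterious form of $\varphi$ comes from (a backward Gaussian kernel), and it makes manifest that $\varphi$ depends on $(x,z)$ only through $x-z$. The blow-up analysis at $t\uparrow 1$, split into $\Lambda(1-)<\infty$ and $\Lambda(1-)=\infty$, is essentially identical to the paper's.

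One small inaccuracy in your parenthetical remark about integrability of the initial value: you assert that $Z_0\sim\mu$ is centred Gaussian of variance $c<\ell$. In the paper $\ell$ \emph{is} the constant $c$ fixed in Assumption \ref{AssZ}, so $c<\ell$ is false, and indeed $E_\mu[\exp(Z_0^2/(2\ell))]=\infty$ when $\ell=c$. This does not break the proof, however, because the supermartingale property is asserted and used (in Proposition \ref{bpconv}) under $P^{0,z}$, where $(X_0,Z_0)=(0,z)$ is deterministic; hence $\varphi(0,X_0,Z_0)=\varphi(0,0,z)$ is a finite constant and integrability of the initial value is automatic. You should phrase that step in terms of $P^{0,z}$, not the mixture $\bbP$.
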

\begin{proof}  Direct calculations give
\be \label{rem:phi_PDE}
\varphi_t(t,x,z)+\frac{z-x}{V(t)-t}\varphi_x(t,x,z)+\frac{1}{2}\varphi_{xx}(t,x,z)+\frac{\sigma^2(t)}{2}\varphi_{zz}(t,x,z)=0
\ee
Thus, it follows from  It\^o's formula that $\varphi(t,X_t,Z_t)$ is a local martingale. Since it is obviously positive, it is a supermartingale.

In order to prove the convergence in the case of $x\neq z$, consider two cases:
\begin{itemize}
\item Case 1: $\lim_{t\uparrow 1}\Lambda(t)< +\infty$. Then, since due to the Remark \ref{rem:lambda_to_0} we have that $\lim_{t\uparrow 1}\lambda(t)=0$, we obtain that
    \be
    \lim_{t\uparrow 1}\varphi(t,x,z)=\lim_{t\uparrow 1}\frac{1}{\sqrt{2(\Lambda
(t)+\ell)}}e^{\frac{(x-z)^2}{2 \lambda^2(t)(\Lambda(t)+\ell)}}=+\infty
    \ee
    \item Case 2: $\lim_{t\uparrow 1}\Lambda(t)= +\infty$. In this case we will have:
    \[
    \lim_{t\uparrow 1}\log\varphi(t,x,z)=\lim_{t\uparrow 1}\log(2(\Lambda
(t)+\ell))\left[\frac{(x-z)^2}{2 \lambda^2(t)(\Lambda(t)+\ell)\log(2(\Lambda
(t)+\ell))}-\frac{1}{2}\right]=+\infty
    \]
    where the last equality is due to Assumption \ref{AssZ}.\ref{tomodify}. Indeed, the condition yields that
    \[
    \lim_{t\uparrow 1}\lambda^2(t)2(\Lambda(t)+\ell)\log(2(\Lambda(t)+\ell))=0
     \]
     since when $\lim_{t \uparrow 1}\Lambda(t)=\infty$, $\lim_{t \uparrow 1}\frac{\log(2(\Lambda(t)+\ell))}{\log(\Lambda(t))}=1$ and $\lim_{t\uparrow 1}\lambda^2(t)\Lambda(t)=0$. Therefore we have $\lim_{t\uparrow 1}\varphi(t,x,z)=+\infty$
\end{itemize}
The proof is now complete. \end{proof}

\begin{proposition} \label{bpconv}  $P^{0,z}(\lim_{t \uparrow 1}
X_t=Z_1)=1$ where $X$ is the unique strong solution to (\ref{sde:g}).
\end{proposition}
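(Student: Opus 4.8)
The plan is to exploit the positive supermartingale $\varphi(t,X_t,Z_t)$ constructed in the previous lemma, following the classical strategy for proving that a diffusion bridge hits its target. First I would invoke the supermartingale convergence theorem: since $(\varphi(t,X_t,Z_t))_{t\in[0,1)}$ is a nonnegative supermartingale under $P^{0,z}$, it converges $P^{0,z}$-almost surely to a finite limit as $t\uparrow 1$. In particular, $\limsup_{t\uparrow 1}\varphi(t,X_t,Z_t)<\infty$ almost surely. On the other hand, the lemma shows that $\varphi(t,x,z)\to+\infty$ whenever $x\neq z$, and (by the explicit form, since $\lambda(t)\to 0$ and $\Lambda(t)+\ell$ stays bounded below) this blow-up is uniform on sets where $|x-z|$ is bounded away from zero. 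Hence on the event that $X_t$ does not converge to $Z_1$, one can produce a sequence $t_n\uparrow 1$ along which $|X_{t_n}-Z_{t_n}|\geq\delta>0$ (using also that $Z_t\to Z_1$ a.s. because $Z$ is a square-integrable martingale), which would force $\varphi(t_n,X_{t_n},Z_{t_n})\to+\infty$, contradicting the finiteness of the limit.

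To make this precise I would argue as follows. Fix the exceptional null set aside. Since $Z$ is a continuous $L^2$-bounded martingale on $[0,1]$ (by Assumption \ref{AssZ}.3 and \eqref{SDEsignal}), $Z_t\to Z_1$ $P^{0,z}$-a.s. Now suppose, for contradiction, that with positive probability $X_t\not\to Z_1$ as $t\uparrow 1$. Then on that event there is $\delta>0$ and $t_n\uparrow 1$ with $|X_{t_n}-Z_1|\geq 2\delta$; combined with $Z_{t_n}\to Z_1$, for $n$ large we get $|X_{t_n}-Z_{t_n}|\geq\delta$. The key quantitative input is that
\[
\varphi(t,x,z)=\frac{1}{\sqrt{2(\Lambda(t)+\ell)}}\,e^{\frac{(x-z)^2}{2\lambda^2(t)(\Lambda(t)+\ell)}}\geq \frac{1}{\sqrt{2(\Lambda(t)+\ell)}}\,e^{\frac{\delta^2}{2\lambda^2(t)(\Lambda(t)+\ell)}}
\]
on $\{|x-z|\geq\delta\}$, and the right-hand side tends to $+\infty$ as $t\uparrow 1$ exactly by the two cases analyzed in the preceding lemma (using Assumption \ref{AssZ}.\ref{tomodify} in Case 2 and $\lambda(t)\to 0$ in Case 1). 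Therefore $\varphi(t_n,X_{t_n},Z_{t_n})\to+\infty$ along the subsequence, contradicting the a.s. finite limit of the supermartingale. Hence $X_t\to Z_1$ $P^{0,z}$-a.s.

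One point that needs a little care, and which I expect to be the main obstacle, is the measurability/continuity bookkeeping needed to guarantee that $\lim_{t\uparrow 1}X_t$ actually \emph{exists} (not merely that it equals $Z_1$ whenever it exists): a priori the supermartingale argument only rules out oscillations that keep $X_t$ away from $Z_t$, so one should argue that since $Z_t$ converges, the only way $X_t$ can fail to converge is to oscillate around the (converging) path of $Z$, and any such oscillation with amplitude $\geq\delta$ again makes $\varphi$ blow up along a subsequence. Concretely, if $X_t$ does not converge then $\liminf_{t\uparrow 1}X_t<\limsup_{t\uparrow 1}X_t$, and since $Z_t$ has a limit, at least one of $\liminf X_t$, $\limsup X_t$ differs from $\lim Z_t$; picking $t_n\uparrow 1$ realizing that extreme value gives $|X_{t_n}-Z_{t_n}|$ bounded away from $0$, and we conclude as above. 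This simultaneously yields existence of the limit and its identification with $Z_1$, which is precisely the assertion of the proposition; it is worth remarking that this also verifies condition \textbf{C2} in the Gaussian case.
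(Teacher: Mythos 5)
Your proof is correct and follows the same strategy as the paper's: use the supermartingale convergence theorem to get a finite a.s.\ limit for $\varphi(t,X_t,Z_t)$, then invoke the blow-up of $\varphi$ off the diagonal to force $X_t-Z_t\to 0$, hence $X_t\to Z_1$ by continuity of $Z$. The paper's own proof is more terse --- it simply cites (\ref{zero_varphi}) and concludes --- whereas you correctly make explicit the two points that are implicit there: (i) that the blow-up of $\varphi$ must be uniform on $\{|x-z|\geq\delta\}$ (not just pointwise for fixed $(x,z)$) in order to rule out oscillation, which you justify from the explicit monotone dependence of $\varphi$ on $|x-z|$ and the Case 1/Case 2 analysis; and (ii) that the conclusion includes \emph{existence} of $\lim_{t\uparrow 1}X_t$, not merely its identification, which you extract by combining the uniform blow-up with $Z_t\to Z_1$. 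The only cosmetic difference is that the paper additionally applies Fatou's lemma to reconfirm finiteness of $M_1$; that step is already subsumed in the nonnegative-supermartingale convergence theorem, which you invoke directly. Your filling-in of the uniformity step is the right thing to do; no gap.
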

\begin{proof} Let $M_t:= \varphi(t,X_t,Z_t)$. Then, $M=(M_t)_{t \in [0,1)}$ is a positive supermartingale by the previous lemma. Using the supermartingale convergence theorem, there exists an $M_1 \geq 0$
such that $\lim_{t \uparrow 1} M_t=M_1,$ $P^{0,z}$-a.s.. Using Fatou's lemma and the fact that $M$ is a supermartingale, we have
\[
M_0 \geq \liminf_{t \uparrow 1}E^{0,z}[M_t] \geq E^{0,z}[M_1]=E^{0,z}\left[\lim_{t \uparrow 1}\varphi(t,X_t,Z_t)\right],
\]
where $E^{0,z}$ is the expectation operator with respect to $P^{0,z}$.
Since $M_0$ is finite, one has  $\lim_{t\uparrow 1} \varphi(t,X_t,Z_t)$ is finite $P^{0,z}$-a.s.. Therefore,  $P^{0,z}(\lim_{t\uparrow 1}  X_t \neq Z_1)=0$ in view of  (\ref{zero_varphi}).
\end{proof}
\begin{proposition} \label{c3:g} Let $a\equiv 1$, $\mu(dz)=G(0,0; c, z)dz$ where $c \in (0,1)$ is
 the real number fixed in Assumption \ref{AssZ} and $G$ is given by (\ref{GviaGamma}). Then,
 \[
 \bbP[Z_t \in dz|\cF^X_t]=G(t,X_t;V(t),z)\,dz.
 \]
 \end{proposition}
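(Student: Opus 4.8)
The plan is to identify the true $\cF^X_t$-conditional density of $Z_t$ with $\rho(t,X_t,\cdot)=G(t,X_t;V(t),\cdot)$ by verifying that \emph{both} objects solve the same nonlinear filtering equation from the same initial datum, and then invoking uniqueness for that equation. Throughout one uses the simplifications forced by $a\equiv 1$: then $A(t,x)=x$, so in the notation of Lemma \ref{pequivrho} one has $R_t=X_t$, $U_t=Z_t$, $b\equiv 0$, and $\Gamma(t,x;u,z)=(2\pi(u-t))^{-1/2}\exp(-(x-z)^2/(2(u-t)))$, whence $p(t,x,z)=\rho(t,x,z)=G(t,x;V(t),z)$ and $\tfrac{p_x}{p}(t,x,z)=\tfrac{z-x}{V(t)-t}$. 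In particular (\ref{SDEx}) becomes (\ref{sde:g}), namely $dX_t=dB_t+\tfrac{Z_t-X_t}{V(t)-t}dt$, which by Proposition \ref{c1:g} has a unique strong solution on $[0,1)$ with $(X,Z)$ strong Markov, so the measure $\bbP$ of (\ref{d:bbP}) is well defined.

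First I would record that the \emph{candidate} solves the filtering SPDE: by Lemma \ref{pequivrho}(2) applied with $a\equiv 1$ (so that $p=\rho$, $R=X$, $b\equiv 0$), the process $t\mapsto\rho(t,X_t,\cdot)=G(t,X_t;V(t),\cdot)$ is a weak solution of (\ref{spde0}), with initial value $\Gamma(0,0;c,\cdot)$. Next, the \emph{true filter} solves the same SPDE: fix $T<1$; on $[0,T]$ the map $(t,z,x)\mapsto\tfrac{z-x}{V(t)-t}$ is bounded since $\inf_{[0,T]}(V(s)-s)>0$, and $\sup_{s\le T}\bbE[(Z_s-X_s)^2]<\infty$ because $(X,Z)$ is Gaussian with locally bounded variances; hence standard nonlinear filtering theory (cf.\ \cite{ls}) yields that $Z_t$ admits a conditional density $g_t$ given $\cF^X_t$ and that $(g_t)_{t<1}$ solves the Kushner--Stratonovich equation of the system. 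Since here the observation drift equals $\tfrac{p_x}{p}(t,X_t,Z_t)$ and the generator of $Z$ is $\half\sigma^2(t)\,\partial_{zz}$, this Kushner--Stratonovich equation is precisely (\ref{spde0}) with $b\equiv 0$, $R=X$; and its initial condition is the law of $Z_0$, i.e.\ $\mu(dz)=G(0,0;c,z)\,dz=\Gamma(0,0;c,z)\,dz$, because $\cF^X_0$ is $\bbP$-trivial ($X_0\equiv 0$). Thus $g$ and $\rho(\cdot,X_\cdot,\cdot)$ are two solutions of (\ref{spde0}) started from the same datum.

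It then suffices to show that (\ref{spde0}) admits at most one solution started from $\Gamma(0,0;c,\cdot)$; this forces $g_t(\cdot)=\rho(t,X_t,\cdot)$ a.s.\ for every $t<1$, i.e.\ $\bbP[Z_t\in dz\,|\,\cF^X_t]=G(t,X_t;V(t),z)\,dz$, as claimed. This uniqueness is the one genuinely delicate point. I would obtain it by passing from the normalized filter to the unnormalized (Zakai) density $q_t$, using the exponential factor built from the innovations $I^g$: $q$ then solves a \emph{linear} parabolic SPDE with coefficients that are smooth and bounded on $[0,T]$, for which uniqueness is classical via an energy/Gronwall estimate (the Gaussian bounds of Proposition \ref{Gexists} controlling the $L^2$-norms involved), and $g_t=q_t/\int_{\bbR}q_t(z)\,dz$. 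Alternatively, and more cheaply in this Gaussian setting, one may bypass (\ref{spde0}) altogether and compute the filter by Kalman--Bucy: $Z_t$ given $\cF^X_t$ is Gaussian $N(m_t,\gamma_t)$ with $\gamma_t$ deterministic, solving the Riccati equation $\dot\gamma_t=\sigma^2(t)-\gamma_t^2/(V(t)-t)^2$ with $\gamma_0=c$, whose unique solution on $[0,T]$ is $\gamma_t=V(t)-t$; feeding this back into the filter equation gives $d(m_t-X_t)=-\tfrac{m_t-X_t}{V(t)-t}\,dt$ with $m_0-X_0=0$, so $m_t\equiv X_t$, and the conditional law is $N(X_t,V(t)-t)$, which has density $G(t,X_t;V(t),z)$.

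I expect this last step to be the main obstacle: the first two steps are either recorded machinery (Lemma \ref{pequivrho}) or routine checking of hypotheses, whereas rigorously justifying uniqueness of the filtering equation — equivalently, justifying the applicability of Kalman--Bucy — on the interval $[0,1)$ with its degenerating denominator $V(t)-t$ is where the real content lies. Everything else (that $\rho$ integrates to one, via (\ref{nodrift}); that $(X,Z)$ is jointly Gaussian with finite moments on $[0,T]$) is straightforward.
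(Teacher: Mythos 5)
Your ``alternative'' argument at the end is exactly the paper's own proof of Proposition \ref{c3:g}: invoke the conditionally Gaussian structure (Theorem 11.1 of \cite{ls}), write the Kalman--Bucy equations for $(\widehat Z_t,\gamma_t)$ from Theorem 10.3 of \cite{ls}, verify $\gamma_t=V(t)-t$ against the Riccati equation with $\gamma_0=c=V(0)$, and then conclude $\widehat Z_t\equiv X_t$ from the mean equation. Your observation that $d(m_t-X_t)=-\tfrac{m_t-X_t}{V(t)-t}\,dt$ with zero initial condition immediately gives $m_t\equiv X_t$ is in fact a cleaner closure than the paper's appeal to strong uniqueness of an SDE. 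Incidentally, you write the Riccati equation in the standard form $\dot\gamma_t=\sigma^2(t)-\gamma_t^2/(V(t)-t)^2$, whereas the paper prints $\sigma^2(t)-\gamma_t/(V(t)-t)$; yours is the form given by Theorem 10.3 of \cite{ls} (and the two happen to agree along the solution $\gamma_t=V(t)-t$), so the paper's display appears to contain a typo that your version corrects.

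Your primary route --- showing both the candidate density $\rho(t,X_t,\cdot)$ and the true filter $g_t$ satisfy the Kushner--Stratonovich SPDE (\ref{spde0}) and then invoking filtering uniqueness --- is a genuinely different path from the paper's Gaussian-case proof, but it is precisely the strategy the authors use in the \emph{general} case (Theorem \ref{munique} via Kurtz--Ocone and Corollary \ref{pdfU}). That approach is correct but carries much heavier machinery than needed here: the whole point of isolating the Gaussian case in Section \ref{gaussian} is that the Kalman--Bucy calculation makes (\ref{spde0})-uniqueness unnecessary, sidestepping the unnormalized/Zakai argument you sketch (and which the paper itself addresses only via \cite{ko}, not an energy estimate). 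You correctly flag uniqueness as the delicate point in the SPDE route; but since you then give the Kalman--Bucy argument in full, the proof is complete as written.
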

 \begin{proof} As $(X,Z)$ is jointly Gaussian the conditional distribution of $Z_t$ given $\cF^X_t$ is also Gaussian (see Theorem 11.1 in \cite{ls}). Thus, it suffices to find the conditional mean $\widehat{Z}_t$ and the variance $\gamma_t$ in order to characterize the distribution completely. Theorem 10.3 in  \cite{ls} yields
 \be \label{e:cmean}
 d\widehat{Z}_t=\frac{\gamma_t}{V(t)-t}\left\{dX_t- \frac{\widehat{Z}_t -X_t}{V(t)-t}dt\right\},
 \ee
 and
 \be \label{e:cvar}
 \frac{d\gamma_t}{dt}=\sigma^2(t)-\frac{\gamma_t}{V(t)-t},
 \ee
 with the initial conditions that $\widehat{Z_0}=\bbE[Z_0]=0$ and $\gamma_0=c$ due to the choice of $\mu$. In particular, $\gamma_t$ is deterministic. One can verify directly that $\gamma_t=V(t)-t$ satisfies (\ref{e:cvar}) and the initial condition since $V(0)=c$ by Assumption \ref{AssZ}. Thus, (\ref{e:cmean}) becomes
 \[
 d\widehat{Z}_t=dX_t- \frac{\widehat{Z}_t -X_t}{V(t)-t}dt,
 \]
 i.e. $\widehat{Z}$ is a solution to the following SDE:
\be \label{e:cmean2}
 dY_t=dX_t- \frac{Y_t -X_t}{V(t)-t}dt,
 \ee
Clearly, choosing $Y_t=X_t$ will solve this SDE. Moreover, as the function $\frac{y-x}{V(t)-t}$ is Lipschitz on $[0,T]$ for any $T <1$, it follows from Theorem 7 in Chap. V of \cite{Protter} that (\ref{e:cmean2}) has a unique solution. Thus, $\widehat{Z}_t=X_t$, which in turn yields that the conditional distribution of $Z_t$ is Gaussian with mean $X_t$ and variance $V(t)-t$. Note that the density associated to this distribution is given by $G(t, X_t; V(t), \cdot)$ when $a \equiv 1$.
 \end{proof}\\

\noindent {\sc Proof of Theorem \ref{t:gaussian}.}\hspace{3mm} Propositions \ref{c1:g} and \ref{bpconv} establish that conditions {\bf C1} and {\bf C2} are satisfied. Finally, {\bf C3} is satisfied as well due to Proposition \ref{c3:g} in view of Lemma \ref{lmart}.
\qed

\section{The general case}\label{general}
We now go back to proving Theorem \ref{mainresult}. The proof is structured in several steps in the following way. We  first show that there exists a strong solution, which is also Markov, to the system of SDEs given by (\ref{SDEsignal}) and (\ref{SDEx}) on the time interval $[0,1)$. Then we show that $\lim_{t \uparrow 1} X_t$ exists and equals $Z_1$, $P^{0,z}$-a.s.  implying that there is no explosion until time $1$ so that the solution can be continuously extended to the whole interval $[0,1]$ and satisfies the bridge condition. Then we characterize the conditional distribution of $Z_t$ given $\cF^X_t$ and identify it with  $\rho(t,X_t, \cdot)$ which will in turn imply that $X$ is a local martingale in its own filtration via Lemma \ref{lmart}. Finally, we provide an application of our method to the construction of Ornstein-Uhlenbeck bridges.

\subsection{Existence of a strong solution on the time interval $[0,1)$ and the bridge property}
Recall from Lemma \ref{pequivrho} that $U_t=A(V(t),Z_t)$, $R_t=A(t,X_t)$ where $A$ is defined in (\ref{ef}) and $\rho$ is related to $p$ via (\ref{rhoviap}). Since $A$ is strictly increasing, the existence of the strong  solution with Markov property to the system of SDEs given by (\ref{SDEsignal}) and (\ref{SDEx}) and the convergence of $X_t$ to $Z_1$ is equivalent to the existence of a strong solution with a Markov property of the following system, which can be obtained by an application of It\^o's formula, \bea
dU_t&=&\sigma(t)d\beta_t +\sigma^2(t)b(t,U_t)dt  \nn \\
dR_t&=&dB_t + \left\{ \frac{p_x (t, R_t, U_t)}{p(t,R_t,U_t)} +
b(t,R_t)\right\}dt, \label{sdeR} \eea
and convergence of $ R_t$ to $U_1$.

 First observe that due to (\ref{pviaGamma}), we have $\frac{p_x (t, x, z)}{p(t,x,z)} +
b(t,x)=\frac{\Gamma_x(t,x;V(t),z)}{\Gamma(t,x;V(t),z)}+b(t,x)$. Thus, due to Lemma \ref{Gambounds} and Assumption \ref{fs}, $\frac{p_x (t, x, z)}{p(t,x,z)} +
b(t,x)$ is locally  Lipschitz for $t \in [0,T]$ for any $T <1$\footnote{Lemma \ref{Gambounds} gives a lower bound for $p$. This implies that $\frac{p_x}{p}(t,x,z)$ has locally bounded derivatives with respect to $x$ and $z$ since $\Gamma$ has continuous second derivatives. Moreover, $b$ is Lipschitz by assumption, thus the claim holds. Note that we require $T<1$ so that $V(t)-t$ is bounded away from $0$ for $t \in [0,T]$.} and therefore $R$ is the
unique strong solution to the corresponding stochastic differential
equation on $[0,1)$ up to an explosion time (see Theorem 38 of Chap. V in \cite{Protter}).  Moreover, the solution will have strong Markov property for any stopping time strictly less than the explosion time by Theorem 5.4.20 in \cite{ks}.  We shall now see that there won't be any explosion until time $1$ and, indeed, $R_t$ converges to $U_1$.

\begin{proposition} \label{gconv} Suppose that Assumptions \ref{AssZ}, \ref{fs} and \ref{bt} are satisfied.
Then \[P^{0,z}(\lim_{t \uparrow 1} R_t= U_1)=1.\]
 \end{proposition}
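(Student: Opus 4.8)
The plan is to mimic the proof of Proposition~\ref{bpconv} from the Gaussian case, replacing the explicit supermartingale $\varphi(t,X_t,Z_t)$ by one built from the true transition density $\Gamma$. Concretely, I would look for a nonnegative function $\psi(t,x,z)$ on $[0,1)\times\bbR^2$ such that (i) It\^{o}'s formula applied along $(R_t,U_t)$ produces a local martingale, i.e. $\psi$ solves the parabolic equation associated with the generator of $(R,U)$ in \eqref{sdeR}, and (ii) $\lim_{t\uparrow 1}\psi(t,x,z)=+\infty$ whenever $x\neq z$. Given such a $\psi$, the argument is identical to Proposition~\ref{bpconv}: $M_t:=\psi(t,R_t,U_t)$ is a nonnegative supermartingale (positive local martingale), so it converges $P^{0,z}$-a.s. to some finite $M_1$; Fatou's lemma gives $M_0\geq E^{0,z}[M_1]=E^{0,z}[\lim_{t\uparrow1}\psi(t,R_t,U_t)]<\infty$, whence $\lim_{t\uparrow1}\psi(t,R_t,U_t)<\infty$ a.s., and property (ii) forces $R_1=U_1$ a.s. A by-product is that $R$ does not explode before time $1$, since along any sequence approaching a putative explosion time $<1$ the supermartingale would have to blow up, contradicting its a.s. convergence; this justifies the continuous extension of the solution to $[0,1]$.

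The core of the work is therefore constructing $\psi$. The natural candidate, following the logic that the drift of $R$ was chosen precisely so that $p(t,R_t,\cdot)$ filters correctly, is to take something like $\psi(t,x,z)=h(\Gamma(t,x;V(t),z))/\sqrt{\Theta(t)}$ for an appropriate increasing normalization $\Theta$ and a convex increasing $h$; more simply, one expects $\psi$ to be comparable to $\frac{1}{\sqrt{\Lambda(t)+\ell}}\exp\!\big(\frac{(x-z)^2}{2\lambda^2(t)(\Lambda(t)+\ell)}\big)$ as in the Gaussian lemma, but with $(x-z)^2$ replaced by a quantity controlling $\Gamma$ from below. Here I would invoke the Gaussian lower bound for $\Gamma$ — the Aronson-type estimate, analogous to \eqref{est0}, which under Assumption~\ref{fs} gives $\Gamma(t,x;u,z)\geq c'\,(u-t)^{-1/2}\exp\!\big(-C'(x-z)^2/(u-t)\big)$ (this is the content of Lemma~\ref{Gambounds}, referenced in the excerpt) — to reduce blow-up of $-\log\Gamma(t,R_t;V(t),U_t)$ to blow-up of $(R_t-U_t)^2/(V(t)-t)$, which is exactly what the Gaussian $\varphi$ handled. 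The supermartingale property then follows from \eqref{pderho}: since $p(t,x,z)=\Gamma(t,x;V(t),z)$ satisfies the PDE combining \eqref{eq:pde1} in $(u,z)$ and \eqref{eq:adj1} in $(t,x)$, a function of $p$ of the right form is a space-time (sub/super)solution for the generator $\half\partial_{xx}+(\tfrac{p_x}{p}+b)\partial_x+\half\sigma^2\partial_{zz}+\sigma^2 b\,\partial_z$ of \eqref{sdeR}, killing the It\^{o} drift terms.

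The main obstacle I anticipate is verifying the blow-up property (ii) uniformly enough, and matching the normalization $\Theta(t)$ to the specific rate encoded in Assumption~\ref{AssZ}.\ref{tomodify}. In the Gaussian case the computation in \eqref{rem:phi_PDE} is exact; here I only have two-sided Gaussian bounds on $\Gamma$ with mismatched constants, so I cannot get an exact local-martingale identity from $\psi=$ function of $\Gamma$ alone. The fix is to abandon trying to use $\Gamma$ directly in the It\^{o} argument: instead, construct $\psi$ as the \emph{explicit} Gaussian-type expression $\frac{1}{\sqrt{2(\Lambda(t)+\ell)}}\exp\!\big(\frac{(R_t-U_t)^2}{2\lambda^2(t)(\Lambda(t)+\ell)}\big)$ and check \emph{directly}, via It\^{o}'s formula using $dU_t=\sigma\,d\beta_t+\sigma^2 b\,dt$ and \eqref{sdeR}, that this is a local martingale plus a \emph{nonpositive} finite-variation part — here the bounded drifts $b(t,R_t)$, $\sigma^2(t)b(t,U_t)$ and the cross term $\frac{p_x}{p}(R_t-U_t)$ must be dominated, using the lower bound on $\Gamma$ to bound $|\frac{p_x}{p}|$ and hence to absorb those terms into the $\frac{1}{V(t)-t}$-sized negative curvature term for $t$ near $1$. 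That absorption step — showing the genuinely nonlinear drift $\frac{p_x}{p}(t,R_t,U_t)$ does not destroy the supermartingale property near $t=1$ — is where the real estimation lives, and it is exactly where Assumptions~\ref{AssZ}.\ref{tomodify}, \ref{fs} and \ref{bt} are consumed; everything else is a transcription of the Gaussian proof.
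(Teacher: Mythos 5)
Your plan is genuinely different from the paper's, and as written it has a gap. The paper does not attempt a direct Lyapunov/supermartingale argument for the non-Gaussian system. Instead, it exploits Proposition \ref{hxlim}, which shows that $h_x/h$ is \emph{uniformly bounded}, to perform a Girsanov change of measure: since the drift of $R$ decomposes as $\frac{U_t-R_t}{V(t)-t}+\big(b(t,R_t)+\frac{h_x}{h}(t,R_t;V(t),U_t)\big)$ and the second bracket, together with the drift $\sigma^2(t)b(t,U_t)$ of $U$, is bounded, one can define an equivalent measure $\overline P^{0,z}$ under which $(R,U)$ satisfies exactly the Gaussian SDE \eqref{amc}. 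The non-explosion and a.s. bridge property under $\overline P^{0,z}$ then follow verbatim from Propositions \ref{c1:g} and \ref{bpconv}, and equivalence of measures transports them back to $P^{0,z}$. This is cleaner: it consumes the assumptions through one estimate (boundedness of $h_x/h$) and one measure change, with no new PDE computation.

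The gap in your approach is in the ``absorption step'' you flagged. You propose to use the unchanged Gaussian supermartingale $\varphi(t,x,z)=\frac{1}{\sqrt{2(\Lambda(t)+\ell)}}\exp\bigl(\frac{(x-z)^2}{2\lambda^2(t)(\Lambda(t)+\ell)}\bigr)$ along $(R_t,U_t)$. But under the Gaussian dynamics $\varphi$ satisfies \eqref{rem:phi_PDE} with \emph{equality}: the It\^o drift is identically zero, not negative. Adding the genuine drifts $b(t,R_t)+\frac{h_x}{h}$ and $\sigma^2(t)b(t,U_t)$ contributes to the drift of $\varphi(t,R_t,U_t)$ a term proportional to $\bigl(b(t,R_t)+\frac{h_x}{h}-\sigma^2(t)b(t,U_t)\bigr)\frac{R_t-U_t}{\lambda^2(t)(\Lambda(t)+\ell)}\varphi$, which is bounded in the prefactor but \emph{signed} and unbounded in $|R_t-U_t|$; there is no ``$\frac{1}{V(t)-t}$-sized negative curvature term'' in the Gaussian identity to absorb it into. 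To rescue the direct route you would have to replace $\varphi$ by a deformed version — e.g.\ put a factor $\gamma\in(0,1)$ inside the exponential — so that the Gaussian part of the It\^o drift is strictly negative of order $(1-\gamma)\bigl(1+\frac{\gamma(R_t-U_t)^2}{D(t)}\bigr)/D(t)$ with $D(t)=\lambda^2(t)(\Lambda(t)+\ell)$, and then check by an AM--GM estimate that this dominates the linear-in-$|R_t-U_t|$ perturbation for $t$ near $1$ (using $D(t)\to 0$). That computation is not in your proposal, and without it the supermartingale claim is unsupported. The measure-change route avoids the issue entirely.
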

 \begin{proof}
As observed before there exists a strong solution to  (\ref{sdeR}) up to an explosion time. Let's denote this explosion time with $\tau$. We will first argue that $P^{0,z}(\tau<1)=0$. Recall that $p(t,x,z)=\Gamma(t,x;V(t),z)$ (see (\ref{pviaGamma})) and define \be \label{defh}
h(t,x;u,z):=\frac{\Gamma(t,x;u,z)}{q(u-t,x,z)}
\ee
where $q$ is the transition density of a standard Brownian motion. This yields that
\[
\frac{p_x(t,x,z)}{p(t,x,z)}=\frac{z-x}{V(t)-t}+\frac{h_x(t,x;V(t),z)}{h(t,x; V(t),z)}.
\]
Proposition \ref{hxlim} in the Appendix states that $\frac{h_x(t,x;V(t),z)}{h(t,x; V(t),z)}$ is uniformly bounded on $[0,1]\times\bbR \times [0,1]\times\bbR$, thus we can define an equivalent probability measure $\overline P^{0,z}$ on $\cG_1$ (recall that all the stochastic processes have been defined on $(\Om, \cG, (\cG_t))$ which is introduced at the beginning of Section \ref{bridgesec}) by the following:
\[
\frac{d\overline P^{0,z}}{dP^{0,z}} := \mathcal E\left(\int_0 ^\cdot \sigma(s)b(s,U_s)d\beta_s\right)_1 \mathcal E\left(\int_0 ^\cdot \left\{b(s,R^{\tau}_s)+\frac{h_x(s,R^{\tau}_s;V(s),Z_s)}{h(s,R^{\tau}_s; V(s),Z_s)}\right\}dB_s\right)_1,
\]
where $R^{\tau}$  is the process $R$ stopped at $\tau$ and $\mathcal E(\cdot)_t$ denotes the Dol\'eans-Dade stochastic exponential taken at time $t$. Recall that $\sigma$ and $b$ are bounded by Assumptions \ref{AssZ} and \ref{fs}.
Under this new measure $\overline P^{0,z}$, (\ref{sdeR}) becomes
\begin{eqnarray}
dU_t &=& \sigma(t)d\overline \beta_t \nn\\
dR^{\tau}_t &=& d\overline B_t+\frac{p_x(t,R^{\tau}_t,U_t)}{p(t,R^{\tau}_t,U_t)}dt=d\overline B_t+\frac{U_t-R^{\tau}_t}{V(t)-t}dt, \label{amc}
\end{eqnarray}
where $\overline \beta$ and $\overline B$ are two $(\cG_t)$-Brownian motions under $\overline P^{0,z}$. First, observe that if $P^{0,z}(\tau<1)>0$, so is $\overline P^{0,z}(\tau<1)$. However, we have shown in Proposition \ref{c1:g} that  (\ref{amc}) has a non-exploding solution over $[0,1)$. This contradiction implies that  $P^{0,z}(\tau<1)=0$ . Moreover, Proposition \ref{bpconv} shows that $\overline P^{0,z}(\lim_{t \uparrow 1} R_t=U_1)=1$. This yields  $P^{0,z}(\lim_{t \uparrow 1} R_t=U_1)=1$ due to the equivalence of the measures.

 \end{proof}\\

 The next proposition and its corollary sum up what we have achieved so far in this section.
\begin{proposition} \label{m-estimate} Under Assumptions \ref{AssZ}, \ref{fs} and \ref{bt}, there exists a unique strong solution to the SDE
\[
dR_t=dB_t + \left\{ \frac{p_x (t, R_t, U_t)}{p(t,R_t,U_t)} +
b(t,R_t)\right\}dt,  \]
with $R_0=0$ such that $(R,U)$ has strong Markov property over the interval $[0,1)$. Moreover, $P^{0,z}(\lim_{t \uparrow 1}R_t=U_1)=1$.
\end{proposition}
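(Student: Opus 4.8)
The plan is to assemble Proposition \ref{m-estimate} purely as a summary of the work already done in this subsection, so there is essentially nothing new to prove; the task is to stitch together the local-existence argument from the beginning of the subsection with the non-explosion and convergence statement of Proposition \ref{gconv}. First I would recall that, by the discussion preceding Proposition \ref{gconv}, the drift coefficient $\frac{p_x(t,R_t,U_t)}{p(t,R_t,U_t)}+b(t,R_t)$ equals $\frac{\Gamma_x(t,x;V(t),z)}{\Gamma(t,x;V(t),z)}+b(t,x)$, and by Lemma \ref{Gambounds} together with Assumption \ref{fs} this is locally Lipschitz in $(x,z)$ uniformly on $[0,T]$ for every $T<1$ (the restriction $T<1$ being needed so that $V(t)-t$ stays bounded away from $0$). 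Hence Theorem 38 of Chap.~V in \cite{Protter} gives a unique strong solution up to an explosion time $\tau$, and Theorem 5.4.20 in \cite{ks} gives the strong Markov property of $(R,U)$ for stopping times strictly below $\tau$.

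Next I would invoke Proposition \ref{gconv} directly: it asserts $P^{0,z}(\lim_{t\uparrow 1}R_t=U_1)=1$, and its proof already establishes $P^{0,z}(\tau<1)=0$ via the Girsanov change of measure that turns $(R,U)$ into the Gaussian bridge system \eqref{amc}, which is non-exploding on $[0,1)$ by Proposition \ref{c1:g}. So the solution is in fact defined on the whole of $[0,1)$, the Markov property holds on $[0,1)$, and the convergence $R_t\to U_1$ as $t\uparrow 1$ holds $P^{0,z}$-a.s. This is exactly the content of the Proposition, so the proof is a one-paragraph assembly: ``By the discussion preceding Proposition \ref{gconv} the drift is locally Lipschitz on $[0,T]$ for $T<1$, whence local existence, uniqueness and the strong Markov property follow from \cite{Protter,ks}; Proposition \ref{gconv} shows $\tau\geq 1$ a.s.\ and $R_t\to U_1$, which completes the proof.''

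There is no genuine obstacle here, since all the analytic work — the lower bound on $\Gamma$ from Lemma \ref{Gambounds}, the boundedness of $h_x/h$ from Proposition \ref{hxlim}, and the supermartingale argument underlying Proposition \ref{bpconv} — has been carried out earlier. The only point requiring a little care is making explicit that local existence up to $\tau$ plus $P^{0,z}(\tau<1)=0$ upgrades to a genuine solution on all of $[0,1)$, and that the strong Markov property, initially stated for stopping times below $\tau$, then holds unconditionally on $[0,1)$. If one wanted to be scrupulous, the mild subtlety is that the Girsanov density in the proof of Proposition \ref{gconv} involves the stopped process $R^\tau$, so one should note that the contradiction argument there is precisely what removes the stopping and justifies working on $[0,1)$ afterward; I would simply point back to that proof rather than repeat it.

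\begin{proof}
By the discussion preceding Proposition \ref{gconv}, the drift term satisfies $\frac{p_x(t,x,z)}{p(t,x,z)}+b(t,x)=\frac{\Gamma_x(t,x;V(t),z)}{\Gamma(t,x;V(t),z)}+b(t,x)$, which, in view of Lemma \ref{Gambounds} and Assumption \ref{fs}, is locally Lipschitz in the space variables, uniformly for $t\in[0,T]$ with $T<1$. Hence, by Theorem 38 of Chap.~V in \cite{Protter}, there is a unique strong solution $R$ to the stated SDE with $R_0=0$ on $[0,1)$ up to an explosion time $\tau$, and by Theorem 5.4.20 in \cite{ks} the pair $(R,U)$ enjoys the strong Markov property for any stopping time strictly less than $\tau$. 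Proposition \ref{gconv} shows that $P^{0,z}(\tau<1)=0$, so that the solution is in fact defined on the whole interval $[0,1)$ and $(R,U)$ is strong Markov there, and moreover that $P^{0,z}(\lim_{t\uparrow 1}R_t=U_1)=1$.
\end{proof}
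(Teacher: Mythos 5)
Your proof is correct and follows essentially the same route as the paper's: establish local existence, uniqueness, and the strong Markov property up to the explosion time via local Lipschitz drift (Lemma \ref{Gambounds}, Assumption \ref{fs}, Theorem 38 of Chap.~V in \cite{Protter}, Theorem 5.4.20 in \cite{ks}), then invoke Proposition \ref{gconv} for non-explosion and the convergence $R_t\to U_1$. The only cosmetic difference is that the paper cites Propositions \ref{gconv} and \ref{hxlim} jointly, whereas you cite \ref{gconv} directly and note that \ref{hxlim} is an input to it; this is equivalent.
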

\begin{proof} Propositions \ref{gconv} and \ref{hxlim} imply that $P^{0,z}(\lim_{t \uparrow 1}R_t=U_1)=1$, which in turn yields that there is no explosion until time $1$. Thus, Theorem 38 of Chap. V in \cite{Protter} gives that $R$ is indeed the strong solution of the SDE over the time interval $[0,1)$. Moreover,  due to Theorem 5.4.20 in \cite{ks} $(R,U)$ has strong Markov property.
 \end{proof}\\

 The following corollary is immediate due to the one-to-one relationship, via the strictly monotone transformation $A$, between $(R,U)$ and $(X,Z)$.
\begin{corollary} \label{xexists} Under Assumptions \ref{AssZ}, \ref{fs} and \ref{bt}, there exists a unique strong solution to (\ref{SDEx}) on $[0,1]$ such that $(X,Z)$ has a  strong Markov property. Moreover, $\lim_{t \uparrow 1} X_t$ exists $P^{0,z}$-a.s. and $X_1:=\lim_{t \uparrow 1} X_t=Z_1$, $P^{0,z}$-a.s..
\end{corollary}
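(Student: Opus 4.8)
The plan is to transfer everything from Proposition \ref{m-estimate}, which already carries out all the analytic work at the level of the pair $(R,U)$, back to $(X,Z)$ via the deterministic, time-dependent bijection $A$. Recall that for each fixed $t$ the map $x\mapsto A(t,x)$ defined in (\ref{ef}) is strictly increasing, since $A_x=1/a$ and $a\geq \epsilon>0$ by Assumption \ref{AssZ}.4; hence it is a bijection of $\bbR$ onto $\bbR$, and since $a\in C^{1,2}$ is bounded away from $0$ the maps $A$ and $A^{-1}$ are jointly smooth in $(t,x)$ up to and including $t=1$. Set $X_t:=A^{-1}(t,R_t)$, where $R$ is the strong solution produced by Proposition \ref{m-estimate}, and note similarly $Z_t=A^{-1}(V(t),U_t)$. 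Applying It\^{o}'s formula to $A^{-1}(t,R_t)$ and using the $R$-dynamics (\ref{sdeR}) together with the definition (\ref{bee}) of $b$ --- that is, reversing the computation (\ref{zeta})--(\ref{bee}) that produced $R$ from $X$ --- one checks that $X$ satisfies (\ref{SDEx}) on $[0,1)$ with $X_0=A^{-1}(0,0)=0$; the only identities needed are $A_x=1/a$, $\partial_x A^{-1}(t,x)=a(t,A^{-1}(t,x))$, and (\ref{rhoviap}), which together turn $\frac{p_x}{p}(t,R_t,U_t)$ into $a(t,X_t)\frac{\rho_x}{\rho}(t,X_t,Z_t)$. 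Conversely, if $\widetilde X$ is any strong solution of (\ref{SDEx}) on some $[0,T]$, $T<1$, then $A(t,\widetilde X_t)$ solves the $R$-SDE, so uniqueness for the latter (Proposition \ref{m-estimate}) forces $\widetilde X=X$; this gives existence and pathwise uniqueness on $[0,1)$.

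For the Markov property, observe that $\cF^{X,Z}_t=\cF^{R,U}_t$ because $A(t,\cdot)$ and $A(V(t),\cdot)$ are deterministic bijections, and $(X_t,Z_t)=\Phi_t(R_t,U_t)$ where $\Phi_t(x,z):=(A^{-1}(t,x),A^{-1}(V(t),z))$ is, for each $t$, a homeomorphism of $\bbR^2$. Hence the strong Markov property of $(R,U)$ established in Proposition \ref{m-estimate}, applied at any $\cF^{R,U}$-stopping time $\tau<1$, transfers verbatim to $(X,Z)$, its transition semigroup being the pushforward of that of $(R,U)$ under $\Phi$.

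Finally, the bridge property follows from Proposition \ref{m-estimate}, which gives $\lim_{t\uparrow 1}R_t=U_1$ $P^{0,z}$-a.s. Since $(t,x)\mapsto A^{-1}(t,x)$ is jointly continuous up to $t=1$, we may pass the limit inside:
\[
\lim_{t\uparrow 1}X_t=\lim_{t\uparrow 1}A^{-1}(t,R_t)=A^{-1}(1,U_1)=A^{-1}(1,A(V(1),Z_1))=A^{-1}(1,A(1,Z_1))=Z_1,
\]
using $V(1)=1$ from Assumption \ref{AssZ}.1. In particular $\lim_{t\uparrow 1}X_t$ exists and is finite $P^{0,z}$-a.s., so there is no explosion before time $1$, and $X$ extends continuously to $[0,1]$ with $X_1=Z_1$, $P^{0,z}$-a.s. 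Together with the preceding paragraphs this establishes {\bf C1} (existence, uniqueness, Markov) and {\bf C2}. I do not expect a genuine obstacle here --- the corollary is essentially a change-of-variables restatement of Proposition \ref{m-estimate} --- and the only points requiring a line of care are the smoothness and invertibility of $A$ \emph{up to $t=1$}, which makes the terminal limit legitimate, and the normalization $V(1)=1$, which makes that limit equal to $Z_1$ rather than to some other function of it.
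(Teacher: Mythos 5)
Your proof is correct and follows exactly the paper's approach: the paper dispatches the corollary in a single line as ``immediate due to the one-to-one relationship, via the strictly monotone transformation $A$, between $(R,U)$ and $(X,Z)$,'' and you have simply filled in the change-of-variables details (It\^o in one direction, uniqueness transfer in the other, the bijection for the Markov property, and continuity of $A^{-1}$ together with $V(1)=1$ for the bridge limit). One minor slip worth noting: $a\geq\epsilon$ gives $A_x=1/a\leq 1/\epsilon$, which makes $A(t,\cdot)$ Lipschitz and strictly increasing but does not by itself force surjectivity onto $\bbR$ --- indeed the paper's own proof of Theorem \ref{t31} writes $\int_{A(1,-\infty)}^{A(1,\infty)}$, implicitly allowing these limits to be finite --- but this is harmless for your argument, since you only ever need $A(t,\cdot)$ to be a bijection onto its range, which always holds, and all the composites $A^{-1}(t,A(t,\cdot))$ you use are therefore well defined.
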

\subsection{{Conditional distribution of  $Z$}}
We now turn to proving $\rho(t,X_t, \cdot)$ is the conditional density of $Z_t$ given $\cF^X_t$, which will in turn imply that the solution of (\ref{SDEx}) is a local martingale in its own filtration via Lemma \ref{lmart}. In order to find the conditional density of $Z$ we will first  find the conditional density of $U$ given $(\cF^R_t)$ and then use Lemma  \ref{pequivrho}. The reader is asked to review  the notation introduced after the statement of Theorem \ref{mainresult} at this point. Recall that $U_t=A(V(t),Z_t)$ and $R_t=A(t,X_t)$ where $A$ is the function defined by (\ref{ef}).  Under this transformation $U_0$ has a probability density given by $ \bbP(U_0 \in dz) =\Gamma(0, 0; c, z)\,dz$, where the measure $\bbP$ is defined by (\ref{d:bbP}).

Next,  fix a $T <1$ and  let $\bbP_T:=\bbP|_{\cF^{X,Z}_T}$ be the restriction of $\bbP$ to $\cF^{X,Z}_T$. The reason for this restriction is due to the fact that the drift term in (\ref{sdeR}) is not defined at $t=1$ and this will lead to inapplicability of the results of \cite{ko} that we cite later in this subsection.  Note that $\bbP[U_t \in dz |\cF^R_t]= \bbP_T[U_t \in dz |\cF^R_t]$ for $t \in [0,T]$ and, since $T$ is arbitrary, this identity will allow us to obtain all the conditional distributions of $Z_t$ for $t<1$.

The remainder of this subsection is devoted to the proof of that
\[
\bbP_T[U_t\in dz|\cF^R_t]=p(t,R_t,z)\, dz,
\]
where $p$ is defined by (\ref{rhoviap}). In order to achieve this goal we will use the characterization of the conditional distributions obtained by Kurtz and Ocone \cite{ko}. We refer the reader to \cite{ko} for all unexplained details and terminology.
\begin{remark} \label{X-sm} Let $\overline{\bbP}_T$ be the absolutely continuous measure on the same space defined by the Radon-Nikod\'ym derivative \[
\exp\left\{-\half\int_0^T  \left( \frac{p_x (s, R_s, U_s)}{p(s,R_s,U_s)} +
b(s,R_s)\right)^2ds - \int_0^T  \left( \frac{p_x (s, R_s, U_s)}{p(s,R_s,U_s)} +
b(s,R_s)\right)dB_s\right\}.\]
Note that, under $\overline{\bbP}_T$, $R$ is a Brownian motion independent of $U$. Moreover,  as  we have just seen, there is no explosion before time $1$ for the system of SDEs for $(U,R)$.  Thus, it follows from the no-explosion criterion (see Exercise 2.10 in Chap. IX of \cite{RY}) that $\overline{\bbP}_T$ is a probability measure equivalent to $\bbP_T$. As the natural filtration of a Brownian motion is right continuous, this in turn implies that $(\cF^X_t)_{t\in[0,T]}$ is right continuous, too.
\end{remark}
Let $\cP$ be the set of probability measures on the Borel sets of $\bbR$ topologized by weak convergence.  Given $m \in \cP$ and $m-$integrable $f$ we write $m f:=\int_{\bbR} f(z) m(dz)$.  The next result is Lemma 1.1. in \cite{ko}:
\begin{lemma} There is a $\cP$-valued $\cF^X$-optional process $\pi_t(\om, dx)$ such that
\[
\pi_t f= \bbE[f(U_t)|\cF^R_t]
\]
for all bounded measurable $f$. Moreover, $\pi_t$ has a right continuous version.
\end{lemma}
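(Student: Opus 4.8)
The plan is to read the statement off Lemma 1.1 of \cite{ko}, so the substantive part is to verify that our setting falls under the hypotheses of that paper; for self-containedness I would also outline the underlying construction. First I would record the three structural ingredients that are needed. The signal $U_t=A(V(t),Z_t)$ takes values in the Polish space $\bbR$ and has continuous paths, since $a$ is continuous and bounded away from $0$ (so $A$ is jointly continuous) and $V$ is continuous. The pair $(R,U)$ is a bona fide continuous process on every interval $[0,T]$ with $T<1$ by Proposition \ref{m-estimate}. And the observation filtration $\cF^R=\cF^X$ satisfies the usual conditions: it is complete by the convention fixed after Theorem \ref{mainresult}, and it is right continuous on $[0,T]$ by Remark \ref{X-sm}. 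Granted these, Lemma 1.1 of \cite{ko} applies and delivers the $\cP$-valued, $\cF^R$-optional process $\pi$ with the stated conditioning property, admitting a right-continuous version.

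For concreteness I would also indicate the construction. Fix a countable convergence-determining family $\{f_k\}_{k\geq 1}$ of bounded continuous functions on $\bbR$ with $f_1\equiv 1$, say a countable subalgebra that is sup-norm dense in the continuous functions on the one-point compactification of $\bbR$. Since $(\cF^R_t)$ satisfies the usual conditions, for each $k$ the optional projection theorem provides an $\cF^R$-optional process $Y^k$ with $Y^k_t=\bbE[f_k(U_t)\mid\cF^R_t]$ a.s.\ for every $t\in[0,T]$. A routine argument — controlling the exceptional null sets and using that $\{f_k\}$ is convergence determining — then produces, simultaneously for all $t$ and off a single null set, a unique $\pi_t(\om,\cdot)\in\cP$ with $\int_{\bbR} f_k\, d\pi_t(\om,\cdot)=Y^k_t(\om)$ for all $k$; setting $\pi_t(\om,\cdot)=\delta_0$ on the exceptional set preserves optionality, and a monotone-class argument gives $\pi_t f=\bbE[f(U_t)\mid\cF^R_t]$ for every bounded measurable $f$.

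For the right-continuous version I would fix $k$ and a sequence $t_n\downarrow t$ in $[0,T]$ and split $\pi_{t_n}f_k-\pi_t f_k$ into $\bbE[f_k(U_{t_n})-f_k(U_t)\mid\cF^R_{t_n}]$ plus $\bbE[f_k(U_t)\mid\cF^R_{t_n}]-\bbE[f_k(U_t)\mid\cF^R_t]$. The first term tends to $0$ in $L^1$ because $f_k(U_{t_n})\to f_k(U_t)$ boundedly by path-continuity of $U$, and the second tends to $0$ a.s.\ and in $L^1$ by right continuity of $\cF^R$ and L\'evy's downward convergence theorem. Diagonalising over $k$ and over a countable dense set of sequences $t_n\downarrow t$, and then invoking an optional-section argument, yields a version of $\pi$ whose paths are right continuous in $\cP$ equipped with the (metrizable) topology of weak convergence.

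The step I expect to be the main obstacle is this last one: upgrading the fixed-time equality $\pi_t f=\bbE[f(U_t)\mid\cF^R_t]$ a.s.\ — where a priori the null set depends on $t$ and $f$ — to a single process that is jointly $\cP$-valued optional and right-continuous. Controlling those null sets uniformly in $t$ and patching together the optional projections of the $f_k$ is precisely what Lemma 1.1 of \cite{ko} takes care of, which is why it is cleanest to invoke that result once its hypotheses have been checked as above.
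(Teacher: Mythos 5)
Your proposal takes essentially the same route as the paper, which simply presents this as a quotation of Lemma 1.1 of \cite{ko} without further argument; your verification of the hypotheses (path continuity of $U$, the usual conditions for $\cF^R=\cF^X$ via Remark \ref{X-sm} and the completion fixed after Theorem \ref{mainresult}) is correct and is exactly the implicit check the paper relies on. The additional outline of the construction via optional projections of a convergence-determining family and the right-continuity argument is a faithful sketch of what is inside \cite{ko} and adds detail beyond what the paper records, but it is not a different approach.
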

Let's recall  the {\em  innovation process}
\[
I_t= R_t - \int_0^t \pi_s \kappa_s ds
\]
where $\kappa_s(z):= \frac{p_x (s, R_s, z)}{p(s,R_s,z)} +b(s,R_s)$. The next lemma will show that $R$ is an integrable process and, thus, $\pi_s \kappa_s$ exists for all $s <1$ since $Z$ is integrable and $\frac{p_x (s, x, z)}{p(s,x,z)} = \frac{z-x}{V(s)-s} +$  a bounded function, due to Proposition \ref{hxlim}.
\begin{lemma} Let $R$ be the unique strong solution of (\ref{sdeR}) under Assumptions \ref{AssZ}, \ref{fs} and \ref{bt}. Then, for every $T<1$,
\[
\bbE\left[R_t^2\right] \leq  C(1+\nu_T^{-2}) e^{C \nu_T^{-2} t},
\]
for every $t \leq T$ where $C$ is a constant and $\nu_T:=\inf_{t \leq T}(V(t)-t)$.
\end{lemma}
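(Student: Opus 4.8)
The plan is to expand $R_t^2$ by It\^o's formula, take expectations after a localization, and close the resulting inequality by Gronwall's lemma. The one delicate point is that the drift of $R$ contains the singular term $\frac{U_t-R_t}{V(t)-t}$: the estimate goes through precisely because the associated $-R_t^2/(V(t)-t)$ contribution has the favourable sign, while all remaining terms are controlled using the fact that $V(s)-s\ge\nu_T>0$ on $[0,T]$.

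First I would record two structural inputs. Recalling the definition (\ref{defh}) of $h$ and the identity $\frac{p_x(s,x,z)}{p(s,x,z)}=\frac{z-x}{V(s)-s}+\frac{h_x(s,x;V(s),z)}{h(s,x;V(s),z)}$ established in the proof of Proposition \ref{gconv}, together with the uniform bound on $h_x/h$ from Proposition \ref{hxlim} and on $b$ from Assumption \ref{fs}, the drift of $R$ splits as
\[
\frac{p_x(t,R_t,U_t)}{p(t,R_t,U_t)}+b(t,R_t)=\frac{U_t-R_t}{V(t)-t}+\psi_t,\qquad \sup_{t\in[0,1]}|\psi_t|\le K,
\]
for some constant $K$. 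Moreover $U$ solves $dU_t=\sigma(t)\,d\beta_t+\sigma^2(t)b(t,U_t)\,dt$ with coefficients uniformly bounded on $[0,1]$ by Assumptions \ref{AssZ} and \ref{fs}, and $\bbE[U_0^2]<\infty$ since $\bbP(U_0\in dz)=\Gamma(0,0;c,z)\,dz$ has Gaussian tails by (\ref{est0}); a routine It\^o plus Gronwall argument then yields $C_U:=\sup_{t\in[0,1]}\bbE[U_t^2]<\infty$.

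Next, fix $T<1$ and set $\tau_n:=\inf\{t:|R_t|\ge n\}\wedge T$; since $R$ does not explode before time $1$ (Proposition \ref{m-estimate}), $\tau_n\uparrow T$ almost surely. Applying It\^o's formula to $R_{t\wedge\tau_n}^2$ and using that $\int_0^{t\wedge\tau_n}2R_s\,dB_s$ is a genuine martingale (its integrand is bounded on the stopped interval), one gets
\[
\bbE\left[R_{t\wedge\tau_n}^2\right]=\bbE\int_0^{t\wedge\tau_n}\left[\frac{2R_sU_s-2R_s^2}{V(s)-s}+2R_s\psi_s+1\right]ds.
\]
Using $2R_sU_s\le R_s^2+U_s^2$ and $V(s)-s\ge\nu_T$ on $[0,T]$ one bounds $\frac{2R_sU_s-2R_s^2}{V(s)-s}\le\frac{U_s^2-R_s^2}{V(s)-s}\le\nu_T^{-1}U_s^2$, and $2R_s\psi_s\le R_s^2+K^2$, so the integrand is at most $R_s^2+\nu_T^{-1}U_s^2+K^2+1$. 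Since $R_s^2$ vanishes for $s>\tau_n$ while $R_s=R_{s\wedge\tau_n}$ for $s\le\tau_n$, writing $f_n(t):=\bbE[R_{t\wedge\tau_n}^2]\le n^2$ gives
\[
f_n(t)\le\int_0^t f_n(s)\,ds+\left(\nu_T^{-1}C_U+K^2+1\right)t ,
\]
whence Gronwall's lemma yields $f_n(t)\le(\nu_T^{-1}C_U+K^2+1)\,t\,e^{t}$ uniformly in $n$. Letting $n\to\infty$ and applying Fatou's lemma, $\bbE[R_t^2]\le(\nu_T^{-1}C_U+K^2+1)\,t\,e^{t}$ for all $t\le T$, and since $\nu_T\le V(0)=c\le1$ we have $\nu_T^{-1}\le\nu_T^{-2}$ and $t\,e^{t}\le e^{2\nu_T^{-2}t}$, so after enlarging the constant the asserted bound $\bbE[R_t^2]\le C(1+\nu_T^{-2})e^{C\nu_T^{-2}t}$ follows. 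The only genuine difficulty is the bookkeeping around the singular coefficient: one must retain the $-2R_s^2/(V(s)-s)$ term rather than discard it, so that estimating $2R_sU_s/(V(s)-s)$ produces no term with a bad sign and the linear-growth constant degrades only like $\nu_T^{-1}$.
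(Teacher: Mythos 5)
Your proof is correct and complete; it takes a modestly different route from the paper's. The paper writes $R_t = B_t + \int_0^t \big(\frac{p_x}{p}+b\big)(s,R_s,U_s)\,ds$, squares, applies Cauchy--Schwarz to the drift integral, and bounds $\bbE\big[\big(\frac{p_x}{p}+b\big)^2\big]$ by $C\nu_T^{-2}\big(1+\bbE[R_s^2]\big)$ using the same decomposition $\frac{p_x}{p}(s,x,z)=\frac{z-x}{V(s)-s}+\frac{h_x}{h}$ (with $h_x/h$ and $b$ uniformly bounded, via Proposition \ref{hxlim} and Assumption \ref{fs}) that you invoke; Gronwall then closes the estimate. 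Your It\^{o}-on-$R^2$ route buys two things. First, the localization via $\tau_n$ and Fatou makes the a priori finiteness of $\bbE[R_t^2]$ explicit, whereas the paper's manipulation tacitly takes it for granted before applying Gronwall. Second, by retaining the favourable sign of the $-2R_s^2/(V(s)-s)$ term rather than squaring the whole drift, you obtain a sharper intermediate estimate (prefactor of order $\nu_T^{-1}$ and a $\nu_T$-independent exponential) which you then coarsen to the stated $C(1+\nu_T^{-2})e^{C\nu_T^{-2}t}$. One cosmetic point: the exact Gronwall output from $f_n(t)\le bt+\int_0^t f_n(s)\,ds$ is $f_n(t)\le b(e^t-1)$, which is dominated by your written $bt\,e^t$ since $e^t-1\le t e^t$ for $t\ge 0$, so the conclusion stands in either form.
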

\begin{proof} Note that
 \bean
\bbE\left[R_t^2\right] &\leq& C \left(t + \int_0^t \bbE\left(\frac{p_x (s, R_s, U_s)}{p(s,R_s,U_s)} + b(s,R_s)\right)^2\, ds\right) \\
 & \leq & C \left( 1+ \nu_T^{-2}+ \nu_T^{-2} \int_0^t \bbE\left[R_s^2\right] ds\right) \\
 &\leq & C(1+\nu_T^{-2}) +  C \nu_T^{-2}  \int_0^t \bbE\left[R_s^2\right] ds,
\eean
where $C$ is a generic constant. The result then follows from Gronwall's inequality.
\end{proof}\\

In order to be able to use the results of \cite{ko} we first need to  establish the Kushner-Stratonovich equation satisfied by $(\pi_t)_{t \in [0,T]}$. To this end, let $B(\bbR^2) $ denote the set of bounded Borel measurable real valued functions on $\bbR^2$ and consider  the operator $\cA_0: B(\bbR_+ \times \bbR) \mapsto B(\bbR^2)  $ defined by
\[
\cA_0 \phi (t, x)=\frac{\partial \phi}{\partial t}(t,x) +  \half \sigma^2(t) \frac{\partial^2  \phi}{\partial x^2}(t,x) + \sigma^2(t)  b(t,x) \frac{\partial \phi}{\partial x}(t,x),
\]
with the domain $\mathcal{D}(\cA_0)= C^{\infty}_c(\bbR_+ \times \bbR)$, where  $C^{\infty}_c$ is the class of infinitely differentiable functions with compact support.  Due to  Assumptions \ref{AssZ},  \ref{fs} and \ref{bt} imposed on $\sigma$ and $b$,  it is well-known, see e.g. Remark 5.4.17 in \cite{ks}, that the martingale problem for $\cA_0$ is well-posed and its unique solution is given by $(t, U_t)$.  Moreover, the Kushner-Stratonovich equation for the conditional distribution of $U$ is given by the following:
\be \label{ks}
\pi_t f = \pi_0 f + \int_0^t \pi_s( \cA_0 f) ds + \int_0^t \left[ \pi_s(\kappa_s f) -\pi_s \kappa_s \pi_s f \right] dI_s,
\ee
for all $f \in C^{\infty}_c(\bbR)$(see Theorem 4.3.1 in \cite{Be}) Note that $f$ can be easily made an element of $\mathcal{D}(\cA_0)$ by redefining it as $f \mathbf{n}$ where $\mathbf{n} \in  C^{\infty}_c(\bbR_+)$ is such that  $\mathbf{n}(t)=1$ for all $t \in [0,1]$. Thus, the above expression is rigorous. The following theorem is a corollary to Theorem  4.1 in \cite{ko}.
\begin{theorem} \label{munique} Suppose the conditions in  Assumptions \ref{AssZ},  \ref{fs} and \ref{bt} hold. Let $(m_t)$ be an $\cF^X$-adapted \cadlag $\cP$-valued process such that
\be \label{ks1}
m_t f = \pi_0 f + \int_0^t m_s( \cA_0 f) ds + \int_0^t \left[ m_s(\kappa_s f) -m_s \kappa_s m_s f \right] dI^m_s,
\ee
for all $f \in C^{\infty}_c(\bbR)$, where $I^m_t=R_t-\int_0^t m_s\kappa_s\, ds.$ Then, $m_t=\pi_t$ for all $t<T$, a.s..
\end{theorem}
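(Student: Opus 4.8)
The plan is to recognize that equation (\ref{ks1}) is the \emph{filtered martingale problem} associated with the signal generator $\cA_0$ and the sensor function $\kappa_s(z)=\frac{p_x(s,R_s,z)}{p(s,R_s,z)}+b(s,R_s)$, and that Theorem~\ref{munique} is then a direct instance of Theorem 4.1 in \cite{ko}, which asserts that this filtered martingale problem admits at most one $\cF^R$-adapted \cadlag $\cP$-valued solution, necessarily equal to the filter $\pi$. So the work consists in checking the hypotheses of \cite{ko} in our concrete situation: (i) the signal is the unique solution of a well-posed martingale problem; (ii) the observation $R$ is, after an equivalent change of measure, a functional of the signal plus an independent Brownian motion; (iii) the integrability needed to make sense of the innovation processes $I$ and $I^m$ and of the stochastic integral terms in (\ref{ks}) and (\ref{ks1}).

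For (i): by Remark 5.4.17 in \cite{ks} the martingale problem for $\cA_0$ on $\mathcal{D}(\cA_0)=C^{\infty}_c(\bbR_+\times\bbR)$ is well-posed, its unique solution being the law of the $\bbR_+\times\bbR$-valued process $(t,U_t)$; this is exactly the structure \cite{ko} requires of the signal. For (ii): Remark~\ref{X-sm} shows that the measure $\overline{\bbP}_T$ defined there is a probability measure equivalent to $\bbP_T$, with density a Girsanov exponential built from $\kappa$, and that under $\overline{\bbP}_T$ the process $R$ is a Brownian motion independent of $U$. This is precisely the reference-measure (Kallianpur--Striebel) set-up underlying \cite{ko}. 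It also follows from Remark~\ref{X-sm} that $\cF^X_t=\cF^R_t$ is right-continuous on $[0,T]$, which is the filtration regularity needed for the optional-projection and innovation arguments.

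For (iii): by Proposition~\ref{hxlim} one has $\kappa_s(z)=\frac{z-R_s}{V(s)-s}+(\text{a function of }(s,R_s,z)\text{ bounded uniformly on }[0,1]\times\bbR^2)$, while the preceding lemma gives $\sup_{t\le T}\bbE[R_t^2]<\infty$ and $\nu_T:=\inf_{t\le T}(V(t)-t)>0$. Together with the square integrability of $Z$, hence of $U_t=A(V(t),Z_t)$, this yields $\bbE\int_0^T\pi_s(|\kappa_s|)\,ds<\infty$ and $\bbE\int_0^T\pi_s(\kappa_s^2)\,ds<\infty$, so that $I_t=R_t-\int_0^t\pi_s\kappa_s\,ds$ (and, under the hypotheses of the theorem, $I^m_t=R_t-\int_0^t m_s\kappa_s\,ds$) is a well-defined $\cF^X$-Brownian motion, and for each $f\in C^{\infty}_c(\bbR)$ the coefficients $\pi_s(\kappa_s f)-\pi_s\kappa_s\,\pi_s f$ and their $m$-analogues are integrable on $[0,T]$; hence both (\ref{ks}) and (\ref{ks1}) are rigorously posed and their driving integrals are genuine (local) martingales.

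Granting (i)--(iii), Theorem 4.1 of \cite{ko} applies and gives that every $\cF^X$-adapted \cadlag $\cP$-valued solution $(m_t)$ of (\ref{ks1}) satisfies $m_t=\pi_t$ for all $t<T$, a.s., which is the claim, since $\pi_t f=\bbE[f(U_t)\mid\cF^R_t]$ by Lemma 1.1 of \cite{ko} and $\cF^R_t=\cF^X_t$. The main obstacle is point (iii): the sensor $\kappa_s$ is unbounded because of the factor $1/(V(s)-s)$, which explodes as $s\uparrow1$, and it moreover depends on the observation path through $R_s$ itself; all the estimates and the applicability of \cite{ko} must therefore be secured uniformly on each compact $[0,T]\subset[0,1)$, which is exactly why the statement is restricted to $t<T<1$ and why the a priori second-moment bound of the preceding lemma is indispensable.
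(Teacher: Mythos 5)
There is a genuine gap here, and it is precisely the one the paper itself flags. You assert that Theorem \ref{munique} is ``a direct instance of Theorem 4.1 in \cite{ko}''; but Theorem 4.1 of Kurtz--Ocone is proved under the structural hypothesis that the observation drift (the sensor $\kappa$) depends only on the \emph{signal}, whereas here $\kappa_s(z)=\frac{p_x(s,R_s,z)}{p(s,R_s,z)}+b(s,R_s)$ depends on time $s$ and on the observation path through $R_s$ itself. You notice this dependence near the end of your write-up, but you treat it as merely an integrability/uniformity wrinkle to be absorbed by restricting to $[0,T]$ with $T<1$, rather than as a deviation from the hypotheses of the cited theorem. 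That is the wrong diagnosis: the $1/(V(s)-s)$ factor is handled by restriction to $[0,T]$, but the dependence of $\kappa$ on $R_s$ is a structural mismatch with \cite{ko} that no compactification fixes.

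The reason this matters is that Kurtz--Ocone's uniqueness argument rests on Proposition 2.2 of \cite{ko}, i.e.\ on well-posedness of the \emph{joint} martingale problem for the signal--observation pair $(U,R)$, and in \cite{ko} this well-posedness is deduced from the fact that the drift of the observation is a function of the signal alone. Your verification (i) establishes well-posedness of the martingale problem for $\cA_0$, i.e.\ for the signal $(t,U_t)$ by itself; that is not the same thing and does not supply the missing hypothesis. The paper's proof is therefore \emph{not} a citation of Theorem 4.1 but a statement that one can rerun its proof in the present setting, with the key substitute being that the joint $(R,U)$-martingale problem on $[0,T]$ is well-posed because the system (\ref{sdeR}) has a unique strong solution with coefficients that are bounded on compacts over $[0,T]$ (invoking Proposition 5.3.20 and Remark 5.4.17 in \cite{ks}). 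If you want to repair your argument, replace the claim of direct applicability by: (a) verify well-posedness of the joint $(R,U)$-martingale problem on $[0,T]$ via strong uniqueness for (\ref{sdeR}); (b) check that the remaining steps of the proof of Theorem 4.1 in \cite{ko} (the filtered martingale problem comparison and the Kallianpur--Striebel/reference-measure argument, for which your observations (ii) and (iii) are the right ingredients) go through verbatim once (a) is in hand. Without step (a) explicitly, the uniqueness claim is unsupported.
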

\begin{proof} Proof follows along the same lines as the proof of Theorem 4.1 in \cite{ko}, even though, differently from \cite{ko}, we allow the drift of $R$ to depend on $t$ and $R_t$, too. This is due to the fact that  \cite{ko} used the assumption that the drift depends only on the signal process, $U$, in order to ensure that the joint martingale problem $(R,U)$ is well-posed, i.e. conditions of Proposition 2.2 in \cite{ko} are satisfied.  Note that the relevant martingale problem is well posed in our case  since the system of SDEs in (\ref{sdeR}) has a unique strong solution and the drift and dispersion coefficients are bounded on compact domains over the interval $[0,T]$ (see Proposition 5.3.20 and Remark 5.4.17 in \cite{ks} in this regard).
\end{proof}\\

Now, we can state and prove the following corollary.
\begin{corollary} \label{pdfU} Suppose the conditions in  Assumptions \ref{AssZ},  \ref{fs} and \ref{bt} hold. Then,
\[
\pi_t f = \int_{\bbR} f(z) p(t,R_t,z)\, dz,
\]
for any bounded measurable $f$.
Therefore,
\[
\bbE[f(Z_t)|\cF^X_t]=\int_{\bbR} f(z) \rho(t,X_t,z)\, dz.
\]
\end{corollary}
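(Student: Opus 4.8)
The plan is to verify that the process $m_t(dz) := p(t,R_t,z)\,dz$ satisfies the hypotheses of Theorem \ref{munique}, so that uniqueness forces $\pi_t(dz) = p(t,R_t,z)\,dz$, and then translate back to $(X,Z)$ via Lemma \ref{pequivrho}. First I would check that $m_t$ is a well-defined $\cP$-valued process: since $p(t,x,z) = \Gamma(t,x;V(t),z)$ by (\ref{pviaGamma}) and $\Gamma(t,x;V(t),\cdot)$ is the transition density of $\zeta$ from time $t$ to time $V(t)$ started at $x$ (hence a genuine probability density integrating to $1$ in $z$ by Proposition \ref{Gexists}), $m_t$ indeed takes values in $\cP$; its $\cF^X$-adaptedness and continuity in $t$ follow from the regularity $\Gamma \in C^{1,2,1,2}$ and the continuity of $t \mapsto R_t$. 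The integrability needed to make $m_s \kappa_s$ meaningful is exactly what the preceding lemma on $\bbE[R_t^2]$ (together with Proposition \ref{hxlim}) provides.

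The heart of the argument is to show $m_t$ solves the filtering SPDE (\ref{ks1}). This is precisely the content of part (2) of Lemma \ref{pequivrho}: applying It\^o's formula to $p(t,R_t,z)$ and using the PDE (\ref{pderho}) satisfied by $p$ (which itself comes from combining the forward equation (\ref{eq:pde1}) in $(t,x)$ and the backward/adjoint equation (\ref{eq:adj1}) in $(u,z)$ for $\Gamma$) yields the weak-form stochastic PDE (\ref{spde0}), whose drift term $\sigma^2(s)\{-(b(V(s),z)g_s(z))_z + \half(g_s(z))_{zz}\}$ is exactly $\cA_0^* g_s$ in weak form, i.e. testing against $f \in C_c^\infty(\bbR)$ and integrating by parts gives $m_s(\cA_0 f)$; the stochastic integral term, after writing $dI^g_s = dR_s - b(s,R_s)\,ds - (m_s(\partial_x p / p))\,ds$, matches $[m_s(\kappa_s f) - m_s\kappa_s\, m_s f]\,dI^m_s$ once one uses $\kappa_s(z) = p_x(s,R_s,z)/p(s,R_s,z) + b(s,R_s)$ and the fact that $\int_{\bbR} p_x(s,R_s,z)\,dz = 0$ from (\ref{nodrift}) so the $b(s,R_s)$ part drops out of the bracket. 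I would also confirm the initial condition: $p(0,R_0,z) = p(0,0,z) = \Gamma(0,0;c,z)$, which is $\pi_0 f$ since $\mu(dz) = G(0,0;c,z)\,dz$ translates under $A$ to $\bbP(U_0 \in dz) = \Gamma(0,0;c,z)\,dz$.

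With (\ref{ks1}) verified for $m_t = p(t,R_t,\cdot)\,dz$, Theorem \ref{munique} gives $\pi_t = m_t$ for all $t < T$, and since $T<1$ is arbitrary, for all $t < 1$; this is the first displayed equation of the corollary. For the second, I would invoke part (1) of Lemma \ref{pequivrho}: the identity $\bbP[U_t \in dz \mid \cF^R_t] = p(t,R_t,z)\,dz$ is equivalent to $\bbP[Z_t \in dz \mid \cF^X_t] = \rho(t,X_t,z)\,dz$ (using $\cF^R_t = \cF^X_t$ since $A(t,\cdot)$ is a strictly increasing bijection), and integrating $f$ against the latter gives $\bbE[f(Z_t)\mid\cF^X_t] = \int_{\bbR} f(z)\rho(t,X_t,z)\,dz$. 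The main obstacle is the first paragraph's SPDE verification — specifically the careful integration-by-parts bookkeeping to identify the weak drift with $m_s(\cA_0 f)$ and to show the $b(s,R_s)$ contributions cancel in the innovation correction term — but this is essentially already packaged in Lemma \ref{pequivrho}, so the corollary is then a matter of assembling the pieces and citing Theorem \ref{munique}.
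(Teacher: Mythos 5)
Your proposal follows essentially the same route as the paper: invoke part (2) of Lemma \ref{pequivrho} to identify $m_t(dz)=p(t,R_t,z)\,dz$ as a solution of the Kushner--Stratonovich equation (\ref{ks1}), apply the uniqueness result Theorem \ref{munique} to conclude $\pi_t=m_t$, and then transfer to $(X,Z)$ via part (1) of Lemma \ref{pequivrho}. The paper's own proof is a terse version of exactly this; your additional bookkeeping (well-definedness of $m_t$, the cancellation of $b(s,R_s)$ in the innovation bracket, the initial condition $\Gamma(0,0;c,\cdot)$) is correct and simply makes explicit what the paper leaves implicit.
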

\begin{proof} We have seen in Lemma \ref{pequivrho}  that $p(t,R_t, \cdot)$ satisfies (\ref{spde0}), i.e., $m_t(dz):=p(t,R_t,z)dz$ solves (\ref{ks1}). Then, it follows from Theorem \ref{munique} that $p(t,R_t, \cdot)$ is the conditional density of $U_t$, which gives the first assertion. The second assertion follows from the explicit relationship between $p$ and $\rho$ as described in Lemma \ref{pequivrho} .
\end{proof}

\subsection{Proof of the main result and application to Ornstein-Uhlenbeck bridges}
Now we have all the results necessary to prove Theorem \ref{mainresult}.

\vspace{0.1in}

\noindent \begin{pf-main} The strong solution, Markov property  and $P^{0,z}(\lim_{t \uparrow 1} X_t= Z_1)=1$ follow from Corollary \ref{xexists}. Moreover, it
follows from Corollary \ref{pdfU}  that
$\rho(t,X_t,z)$ is the conditional density of $Z_t$ given $\cF^X_t$.
Thus, $X$ is a local martingale in its own filtration by Lemma \ref{lmart}
\end{pf-main}\\

Note that using the methods employed in this section one can prove the following theorem as well.
\begin{theorem} Let $Z$ be the unique strong solution on $(\Omega , \cG , (\cG_t) , \bbQ)$  to
\[
Z_t=Z_0 + \int_0^t \sigma(s)d\beta_s + \int_0^t \sigma^2(s) b(s, Z_s)ds,
\]
where $b \in C_b^{1,2}$ with bounded derivatives and $\sigma$ is as before.  Suppose $P(Z_0 \in dz) = \Gamma(0,0; c, z) dz$ for some $c \in (0,1)$. Let $\rho(t,x,z):=\Gamma(t,x;V(t),z)$ where $V$ is as defined earlier and $\Gamma(t,x;u,z)$ is the fundamental solution  of (\ref{eq:pde1}). Define $X$ by
\[
dX_t= dB_t + \left\{b(s,X_s)+ \frac{\rho_x(t, X_t, Z_t)}{\rho(t,X_t,Z_t)}\right\}dt,
\]
for $ t \in (0,1)$ with $X_0=0$. Then
\begin{enumerate}
\item In the filtration generated by $X$
\[
X_t-\int_0^t b(s,X_s)ds
\]
defines a standard Brownian motion;
\item $X_1=Z_1$, $P^{0,z}$-a.s. where $P^{0,z}$ is the law of  $(X,Z)$ with $Z_0=z$
and $X_0=0$.
\end{enumerate}
\end{theorem}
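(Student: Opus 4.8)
The plan is to show that this final theorem is essentially the already-established Theorem~\ref{mainresult} transported through the change of variable $A$, since the hypotheses here are exactly the structural conditions imposed on the pair $(R,U)$ in Section~\ref{general}. First I would observe that the process $Z$ of the present statement plays the role of $U_t=A(V(t),Z_t)$ in Lemma~\ref{pequivrho}, and the process $X$ defined here plays the role of $R_t=A(t,X_t)$: indeed the SDE
\[
dX_t=dB_t+\left\{b(t,X_t)+\frac{\rho_x(t,X_t,Z_t)}{\rho(t,X_t,Z_t)}\right\}dt
\]
with $\rho(t,x,z)=\Gamma(t,x;V(t),z)$ is precisely \eqref{sdeR}, and the SDE for $Z$ here is the $U$-equation $dU_t=\sigma(t)d\beta_t+\sigma^2(t)b(t,U_t)dt$ once one recalls that, in the notation of Section~\ref{general}, $b$ is exactly the drift coefficient of $\zeta$. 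The initial law $P(Z_0\in dz)=\Gamma(0,0;c,z)dz$ matches the initial law of $U_0$. So the content of the theorem follows verbatim from the results already proved for $(R,U)$, with no transformation needed: there is no uniform-ellipticity issue because we are working directly at the level of the transformed process.

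The key steps, in order, are: (1) cite Proposition~\ref{m-estimate} to get existence of a unique strong solution $(X,Z)$ on $[0,1)$ with the strong Markov property, noting that the drift $b(t,x)+\Gamma_x(t,x;V(t),z)/\Gamma(t,x;V(t),z)$ is locally Lipschitz on $[0,T]$ for every $T<1$ by Lemma~\ref{Gambounds} and Assumption~\ref{fs}, exactly as in the discussion preceding Proposition~\ref{gconv}; (2) invoke Proposition~\ref{gconv} (equivalently the conclusion of Proposition~\ref{m-estimate}) to conclude $P^{0,z}(\lim_{t\uparrow1}X_t=Z_1)=1$, which gives both the no-explosion up to time $1$ and claim~(2) of the theorem; (3) run the filtering argument of Corollary~\ref{pdfU}: the Kushner--Stratonovich equation \eqref{ks} for the conditional law of $Z_t$ given $\cF^X_t$ is well-posed with unique solution characterized by Theorem~\ref{munique}, and by Lemma~\ref{pequivrho} (the PDE identity \eqref{pderho}, which only uses that $\Gamma$ solves \eqref{eq:pde1} in $(t,x)$ and \eqref{eq:adj1} in $(u,z)$, both granted by Proposition~\ref{Gexists}) the candidate density $\rho(t,X_t,\cdot)$ satisfies \eqref{ks1}; hence $\rho(t,X_t,\cdot)$ is the conditional density; (4) finally, since $\int_{\bbR}\rho_x(t,x;V(t),z)\,dz=0$ by \eqref{nodrift}, standard filtering (Theorem~8.1 in \cite{ls}) gives that in $\cF^X$ the process $X_t-\int_0^t b(s,X_s)\,ds$ has zero drift and unit diffusion coefficient, hence is a standard Brownian motion, which is claim~(1).

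The only point requiring a word of care — and the place I'd expect a referee to push — is that in Section~\ref{general} all these results were stated for the $A$-transformed coordinates $(R,U)$, whereas here they are stated directly in the original $(X,Z)$ coordinates with a general $b\in C^{1,2}_b$. But this is not really an obstacle: the present theorem simply amounts to \emph{dropping} the requirement that $b$ arise from some $a$ via \eqref{bee}, and taking the diffusion coefficient of $Z$ to be $\sigma(t)$ rather than $\sigma(t)a(V(t),Z_t)$. Every lemma cited above (Proposition~\ref{m-estimate}, Proposition~\ref{gconv}, Theorem~\ref{munique}, Corollary~\ref{pdfU}, Lemma~\ref{pequivrho}) uses only the boundedness and regularity of $b$, $b_x$, $b_t$, $\sigma$ (Assumptions~\ref{AssZ}, \ref{fs}, \ref{bt}) and the PDE properties of $\Gamma$, never the specific functional form \eqref{bee}. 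One should therefore remark explicitly that the proofs go through \emph{mutatis mutandis} with $R$ replaced by $X$ and $U$ replaced by $Z$, and point out that here one actually avoids the change of variable altogether. I would phrase the proof as: ``The proof is a line-by-line repetition of the arguments of Section~\ref{general}, now carried out directly for $(X,Z)$ instead of $(R,U)$; we only indicate the changes,'' and then list steps (1)--(4) above with the appropriate citations.
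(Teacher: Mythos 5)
Your reading is correct, and it matches the paper's (unwritten) intention. The paper itself does not give a proof for this theorem; it merely says ``using the methods employed in this section one can prove the following theorem as well,'' so the expected argument is precisely what you propose: observe that the present $(X,Z)$ is, up to relabelling, the system $(R,U)$ of \eqref{sdeR} with $p(t,x,z)=\Gamma(t,x;V(t),z)$ and that the hypothesis $b\in C^{1,2}_b$ with bounded derivatives (together with $\sigma$ satisfying Assumption~\ref{AssZ}) implies Assumptions~\ref{fs} and~\ref{bt}, so Propositions~\ref{m-estimate}, \ref{gconv}, \ref{hxlim}, Theorem~\ref{munique}, Corollary~\ref{pdfU} and equation~\eqref{nodrift} apply verbatim with $(R,U)$ replaced by $(X,Z)$. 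Your step~(4) is the only place that differs cosmetically from Lemma~\ref{lmart}: the filtered drift is now $b(t,X_t)+\int_{\bbR}\Gamma_x(t,X_t;V(t),z)\,dz=b(t,X_t)$ rather than zero, which is exactly claim~(1). You are also right that the specific functional form~\eqref{bee} linking $b$ to $a$ is never used in the cited proofs — only boundedness/regularity of $b,b_x,b_t,\sigma$ and the PDE properties of $\Gamma$ — so dropping that link costs nothing; the only reason the original theorem needed the change of variable $A$ was to reduce a nonconstant diffusion coefficient $a$ to the unit one, and here there is no $a$ to remove. In short, the proposal is correct and coincides with the approach the paper indicates.
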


The following example show that boundedness of $b$ in the above theorem is not a necessary condition for the result to hold as long as $b$ depends linearly on $x$.
\begin{example} Suppose $Z$ is an Ornstein-Uhlenbeck type process, i.e.
\[
dZ_t= \sigma(t) d\beta_t - k \sigma^2(t) Z_t dt,
\]
where $k>0$ is a constant. Note that in this case the fundamental solution  of (\ref{eq:pde1}) is given by
\[
\Gamma(s,x;t,z)=q((1-e^{-2 k (t-s)})/ 2 k, x e^{-k (t-s)}, z).
\]
Let $X$ be defined by $X_0=0$ and
\[
dX_t= dB_t +\left\{2 k \frac{Z_t- X_t e^{-k (V(t)-t)}}{e^{k (V(t)-t)}-e^{- k (V(t)-t)}}-k X_t \right\}dt,
\]
for $t \in (0,1)$. Then, we claim that if $Z_0$ has a probability density given by $\Gamma(0,0;c,\cdot)$ then $X$ is an Ornstein-Uhlenbeck process in its own filtration and $X_1=Z_1, P^{0,z}$-a.s. under an appropriate modification of Assumption \ref{AssZ}.\ref{tomodify}, which we will state later. Using the method employed in the proof of Proposition \ref{c3:g} we have that $\Gamma(t,x;V(t),z)$ is the conditional density of $Z_t$ given $\cF^X_t$ for $t<1$. Thus, it remains to show that $X_1=Z_1$, $P^{0,z}$-a.s..

Note that the above convergence will be obtained if one can find a continuous function $b(t)$ with $b(1)=1$ such that $X_t- b(t) Z_t$ converges to $0$ as $t \uparrow 1$. We will choose this new function so that that $Y_t:=X_t- b(t) Z_t$ defines a Markov process. It can be checked directly that if $b$ satisfies the following ordinary differential equation
\be \label{exb}
b'(t)+\gamma(t)  b(t) = \theta(t),
\ee
where
\bean
\gamma(t)&=& \frac{e^{-c (V(t)-t)}+e^{c (V(t)-t)}}{e^{c (V(t)-t)}-e^{-c (V(t)-t)}}+c \sigma^2(t), \qquad \mbox{and} \\
\theta(t)&=& \frac{2 c}{e^{c (V(t)-t)}-e^{-c (V(t)-t)}},
\eean
then $Y$ satisfies the following SDE
\be \label{exY}
dY_t= dB_t-b(t)\sigma(t)d\beta_t - c \frac{e^{-c (V(t)-t)}+e^{c (V(t)-t)}}{e^{c (V(t)-t)}-e^{-c (V(t)-t)}} Y_t dt.
\ee
The solution to (\ref{exb}) with the boundary condition $b(1)=1$ is given by
\[
b(t)=\frac{\int_0^t e^{\int_0^s \gamma(r) dr} \theta(s) ds }{e^{\int_0^t \gamma(r) dr}}.
\]
In order to show $Y_t$ converges to $0$ as $t \uparrow 1$ consider the function $\varphi$ defined by
\[
\varphi(t,y):=\frac{1}{\sqrt{2(\Lambda(t) + \ell)}}e^{\frac{y^2}{2 \lambda^2(t)(\Lambda(t)+\ell)}},
\]
where
\bean
\lambda(t)&:=&\exp\left({-c \int_0^t  \frac{e^{-c (V(s)-s)}+e^{c (V(s)-s)}}{e^{c (V(s)-s)}-e^{-c (V(s)-s)}} ds}\right), \qquad \mbox{and} \\
\Lambda(t)&:=&\int_0^t \frac{1+ b^2(s) \sigma^2(s)}{\lambda^2(s)} ds.
\eean
A direct application of It\^{o}'s formula gives that $\varphi(t,Y_t)$ is a positive local martingale, hence a super-martingale. If  Assumption \ref{AssZ} holds with $\lambda$ and $\Lambda$ defined above, then we can imitate the proof of  Proposition \ref{bpconv} using $\varphi(t,y)$ defined above to conclude that $Y_t$ converges to $0$ as $t \uparrow 1$.
\end{example}

\section{Application to finance: A generalization of Back-Pedersen equilibrium model}\label{application}

We will use the previous bridge construction to solve an equilibrium
model with information asymmetry that can be viewed as a
non-Gaussian generalization of Back and Pedersen's \cite{BP}. We keep the notation of the previous sections, in particular all the stochastic processes will be defined on $(\Omega , \cG , (\cG_t) , \bbQ)$.

Consider a stock issued by a company with fundamental value given by the diffusion  process $\mathcal{Z}=(\Omega , \cG , (\cG_t) , (Z_t) ,
(P^z)_{z\in \bbR})$ with values in
$\bbR$, and satisfying
\begin{equation}
Z_t=Z_0+\int_0^t\sigma(s)a(V(s),Z_s)d\beta_s
\end{equation}
where $\beta$ is a standard Brownian motion adapted to $(\cG_t)$, $a$ and $\sigma$ are deterministic functions, $V(t)=c+\int_0^t\sigma^2(s)ds$, for some constant $c$ and the probability density of $Z_0$ is  $G(0, 0; c, \cdot)$ with $G$ given by (\ref{GviaGamma}). We will require $a$ and $\sigma$ satisfy some further assumptions which are  made precise in Assumption \ref{a:model} below.

Then, if the firm value is observable, the fair stock price should be a
function of $Z_{t}$ and $t$. However, the assumption of the company value being
discernible by the whole market in continuous time is counter-factual, and it
will be more realistic to assume that this information is revealed to the
market only at given time intervals (such as dividend payments times or when
balance sheets are publicized).

In this model we therefore assume, without loss of generality, that
the time of the next information release is $t=1$, and the market
terminates after that. Hence, in this setting the stock can be viewed as
a European option on the firm value with maturity  $T=1$ and payoff
$f(Z_{1})$. In addition to this risky asset, there is a riskless
asset that yields an interest rate normalized to zero for
simplicity of exposition.

The microstructure of the market, and the interaction of market
participants, is modeled as a generalization of \cite{BP}. There are
three types of agents: noisy/liquidity traders, an informed
trader (insider), and a market maker, all of whom are risk
neutral. The agents differ in their information sets, and objectives, as
follows.

\begin{itemize}
\item \textit{Noisy/liquidity traders} trade for liquidity reasons, and
their total demand at time $t$ is given by a standard $(\cG_t)$-Brownian motion $
B$ independent of $\beta$ and $Z_0$.

\item \textit{Market maker} observes only the total market order process $
Y_{t}=\theta _{t}+B_{t}$, where $\theta _{t}$ is the total order of the
insider at time $t$ which is an absolutely continuous process, and therefore $Y$ is a continuous semimartingale on $(\Om, \cG, (\cG_t), \bbQ)$.  This in particular implies that the market maker's  filtration is $\mathcal{F}_{t}^{Y}$.  Similar to \cite{Cho}, we assume that the market maker sets the price as a
function of weighted total order process at time $t$, i.e. we
consider pricing functionals $
S\left( Y_{[0,t]},t\right) $ of the following form%
\begin{equation} \label{eq:rule_mm}
S\left( Y_{[0,t]},t\right) =H\left(t, X_t\right), \qquad \forall t\in [0,1)
\end{equation}
where $X$ is the unique strong solution of
\begin{equation}\label{eq:signal_mm}
dX_t = w (t,X_t) dY_t,\quad \forall t\in [0,1), \, X_0 =0
\end{equation}
on $(\Om, \cG, (\cG_t), \bbQ)$ for some deterministic function $w(s,x)$ chosen by the market maker.  Moreover, a pricing rule $(H,w)$ has to be admissible in the sense of Definition \ref{def:prule}.  In particular, $H \in C^{1,2}$ and, therefore, $S$ is a semimartingale on $(\Om, \cG, (\cG_t), \bbQ)$ on $[0,1)$.
\item \textit{The informed investor} observes the price process $
S_{t}=H\left(t, X_t\right)$ where $X$ is given by (\ref{eq:signal_mm}), and the true firm value $Z_{t}$,
i.e. her filtration is given by
$(\mathcal{F}_{t}^{Z,S})$. Since she is
risk-neutral, her objective is to maximize the expected final
wealth, i.e.
\begin{equation}
\sup_{\theta \in \mathcal{A}(H,w)}E^{0,z}\left[ W_{1}^{\theta
}\right] =\sup_{\theta \in \mathcal{A}(H,w)}E^{0,z}\left[
(f(Z_{1})-S_{1-})\theta _{1}+\int_{0}^{1-}\theta _{s}dS_{s}\right]
\label{ins_obj}
\end{equation}%
where $E^{0,z}$ is the expectation with respect to the probability measure $P^{0,z}$ which is the law of $(X,Z)$ with $X_0=0$ and $Z_0=z$, and $\mathcal{A}(H,w)$ is the set of admissible trading strategies
for the given pricing rule $(H,w)$, which will be defined in Definition \ref{admissible}. That is, the insider maximizes the
expected value of her final wealth
$W_{1}^{\theta }$, where the first term on the right hand side of equation (%
\ref{ins_obj}) is the contribution to the final wealth due to a potential
differential between price and fundamental at the time of information
release, and the second term is the contribution to final wealth coming from
the trading activity.
\end{itemize}
\begin{remark} Note that by setting $\sigma \equiv 0$ and $c=1$, we obtain the ``static information market'' considered by \cite{B}. Moreover, setting $a\equiv 1$ results in the model studied by \cite{BP}.

In both cases, $V(t)-t$ was a measure of the uncertainty of the market about the value of $Z_t$ which is equivalent to the informational advantage of the insider in comparison with the market maker (see discussion at the beginning of p.~393 of \cite{BP}). As we will see later in Remark \ref{r:ia}, this observation remains valid in our generalized case.
\end{remark}

\begin{remark} We stress the fact that the total demand $Y=Y^\theta$ depends on insider's strategy $\theta$, so that the market maker's filtration $\mathcal F^Y = \mathcal F^{Y^\theta}$ depends also on $\theta$ (through $Y$). To avoid heavy notation, we will drop the superscript $\theta$ from total demand. So will simply write $Y$ and $\mathcal F^Y$ instead of $Y^\theta$ and $\mathcal F^{Y^\theta}$, respectively.
\end{remark}

Note also that the above market structure implies that the insider's
optimal trading strategy takes into account the \emph{feedback
effect }i.e. that prices react to her trading strategy according
to (\ref{eq:rule_mm}) and (\ref{eq:signal_mm}). Our goal is to find the rational
expectations equilibrium of this market, i.e. a pair consisting of
an \emph{admissible} pricing rule and an \emph{admissible}
trading strategy such that: \textit{a)} given the pricing rule
the trading strategy is optimal, \textit{b)} given the trading
strategy, there exists a unique strong solution, $X_t$, of (\ref{eq:signal_mm}) over the time interval $[0,1)$, and the pricing rule is {\em rational} in the following sense:
\be \label{mm_obj}
H(t,X_t)=S_t=\mathbb{E}^{\bbQ}\left[f(Z_1)|\mathcal{F}_t^Y\right]
\ee
with $S_1=f(Z_1)$.
To formalize
this definition of equilibrium, we first need to define the sets of admissible
pricing rules and trading strategies.

The definition of admissible pricing rules is a generalization
 of the one in \cite{B} and \cite{BP}. This generalization
allows the market maker to re-weight his past information with a weighting function $w$ as stated in (\ref{eq:signal_mm}).

\begin{definition}\label{def:prule} For a given  semimartingale $Y$ on $(\Om, \cG, (\cG_t), \bbQ)$, an {\em admissible
pricing rule} is any pair $(H,w)$ fulfilling the following
conditions:
\begin{enumerate}
\item $w : [0,1]\times \bbR \mapsto \bbR_+$ is a function in $C^{1,2}([0,1] \times \bbR)$  bounded away from $0$.
\item There exists a unique strong solution of
\begin{equation}\label{eq:signal_mm1}
dX_t = w (t,X_t) dY_t,\quad X_0 =0
\end{equation}
over the time interval $[0,1)$ on $(\Om, \cG, (\cF^Y_t), \bbQ)$;
\item $H \in C^{1,2}([0,1] \times \bbR)$;
\item $x \mapsto H (t,x)$ is strictly increasing for every $t\in [0,1]$;
\end{enumerate}
Moreover, given $\theta \in \cA(H,w)$, a pricing rule $(H,w)$ is said to be \emph{rational} if it satisfies (\ref{mm_obj}).
\end{definition}
\begin{remark} The strict monotonicity of $H$ in the space variable implies $H$ is invertible,
thus, the filtration of the insider is generated by $X$ and $Z$. Moreover, since $w$ is bounded away from $0$ the filtrations generated by $X$ and $Y$ are the same. This in turn implies that $(\cF^{S,Z}_t)=(\cF^{B,Z}_t)$, i.e. the insider has full information about the market.
\end{remark}

It is standard (see, e.g., \cite{BP}, \cite{Cho} or \cite{Wu}) in the insider trading literature to limit
the set of admissible strategies to absolutely continuous ones motivated by the result in Back \cite{B}, and we do so. The formal definition of the set
of admissible trading strategies is summarized in the following
definition.

\begin{definition} \label{admissible}
An $\cF^{B,Z}$-adapted  $\theta$ is said to be an  admissible trading
strategy for a  given pair $(H,w)$  if
\begin{enumerate} \item it is  absolutely continuous with respect to the Lebesgue
measure, i.e. $\theta_t = \int_0 ^t \alpha_s ds$;
\item There exists a unique strong solution, $X$, to the SDE\footnote{Note that this SDE is well defined on $(\Om, \cG, (\cG_t), \bbQ)$ since, due to absolute continuity of $\theta$, $Y=B+\theta$ is a semimartingale on it.} (\ref{eq:signal_mm1}) on $(\Om, \cG, (\cF^{B,Z}_t), \bbQ)$ over the interval $[0,1)$.
\item $(X,Z)$ is a Markov process adapted to $(\cG_t)$ with law $P^{0,z}$;
\item and no doubling strategies are
allowed i.e.
\begin{equation}
E^{0,z}\left[ \int_{0}^{1}H^{2}\left(t,X_t\right)dt\right] <\infty.
\label{theta_cond_2}
\end{equation}
 The set of admissible trading strategies for the  given pair $(H,w)$ is denoted with $\mathcal{A}(H,w)$.
\end{enumerate}
\end{definition}

Given these definitions of admissible pricing rules and trading strategies,
it is now possible to formally define the market equilibrium as follows.
\begin{definition} \label{eqd} A triplet $(H^{\ast},w^{\ast}, \theta^{\ast})$
is said to form an equilibrium if $(H^{\ast},w^{\ast})$ is an admissible pricing rule for the semimartingale $Y^{\ast}=B+\theta^{\ast}$,
$\theta^{\ast} \in \cA(H^{\ast},w^{\ast})$, and the following conditions are
satisfied:
\begin{enumerate}
\item {\em Market efficiency condition:} given $\theta^{\ast}$,
$(H^{\ast},w^{\ast})$ is a rational pricing rule. \item {\em Insider
optimality condition:} given $(H^{\ast},w^{\ast})$, $\theta^{\ast}$ solves
the insider optimization problem:
\[
E^{0,z}[W^{\theta^{\ast}}_1] = \sup_{\theta \in \cA(H^{\ast},w^{\ast})} E^{0,z} [W^{\theta}_1].
\]
\end{enumerate}
\end{definition}
Additionally, to define a well behaved problem we impose the following
technical conditions on the model parameters.

\begin{assumption}\label{assf} $f:\bbR \mapsto \bbR$ is a strictly increasing function belonging to $C^1$ such that
\[
|f(z)| \leq k_1 \exp\left(k_2 A(1,z)\right), \qquad \forall z \in \bbR
\]
for some constants $k_1$ and $k_2$ where $A$ is given by (\ref{ef}).
\end{assumption}
\begin{remark} The assumption that $f$ is
strictly increasing implies that the larger the signal $Z$ the
larger the value of the risky asset for the insider. This assumption
will also play a role in order to prove that the proposed
equilibrium pricing rule satisfies condition 4 in Definition
\ref{def:prule}. \end{remark}
\begin{assumption}\label{a:model}
We assume that the parameters of the model satisfy the following assumptions:
\begin{enumerate}
\item Both $a(t,z)$ and $\sigma(t)$ satisfy Assumption \ref{AssZ}.
\item $a(t,z)$ also satisfies a nonlinear PDE:
\begin{equation}\label{PDEa}
a_t(t,z)+\frac{a^2(t,z)}{2}a_{zz}(t,z)=0
\end{equation}
\end{enumerate}
\end{assumption}

\begin{remark} \label{rem:AG} Due to definition of $b$ given in (\ref{bee}) we have
\bean
 b(t,x)&=&A_t(t,A^{-1}(t,x))-\half a_z(t,A^{-1}(t,x))\\
&=&-\int_0^{A^{-1}(t,x)}\frac{a_t(t,y)}{a^2(t,y)}\,dy - \half a_z(t,A^{-1}(t,x))\\
&=&\half \int_0^{A^{-1}(t,x)}a_{zz}(t,y)\,dy - \half a_z(t,A^{-1}(t,x))\\
&=&-\half a_z(t,0),
\eean
where the second equality is due to the definition of $A$ (see \ref{ef}) and the third equality follows from Assumption \ref{a:model}.2. Therefore, $b$ is continuous and depends only on $t$.
Moreover, Assumptions  \ref{fs} and \ref{bt} are automatically satisfied since $a \in C^{1,2}$ and $t \in[0,1]$. In this case $U_t=A(V(t), Z_t)$, where $A$ is defined in (\ref{ef}), is a Gaussian process.  Moreover, in the next subsection we will give some heuristics indicating that $a(t,z)$ being a solution to (\ref{PDEa}) is a necessary condition for the existence of an equilibrium. This suggests the conclusion that the only possible form for the signal $Z$, for which an equilibrium in the sense of Definition \ref{eqd} exists, is $Z_t=\Phi(t, U_t)$ where $U$ is a Gaussian process and $\Phi$ is a deterministic function.
\end{remark}

\subsection{Equilibrium}

First, we shall provide some heuristics in order to motivate the PDE
(\ref{PDEa}) we imposed on $a(t,z)$.

Let $(H,w)$ be any rational pricing rule. First, notice that a standard application of
integration-by-parts formula applied to $W_1 ^\theta$ for any
$\theta \in \cA(H,w)$  gives
\be \label{eq:w2} W_1 ^\theta = \int_0^1 (f(Z_1)-S_s) \alpha_s \,ds .\ee Furthermore,
\be \label{eq:cew2}
E^{0,z}\left[\int_0 ^1 (f(Z_1)-S_s) \alpha_s ds \right]=E^{0,z}\left[\int_0 ^1 (E^{0,z}[f(Z_1)|\cF^{B,Z}_s]-S_s) \alpha_s ds \right].
\ee
Define the value, $P$, of the stock for the insider by
\be \label{d:P}
P_t:= E^{0,z} [f(Z_1)|\cF_t^{B,Z}]=E^{0,z} [f(Z_1)|\cF_t ^Z] = F(t,Z_t),
\ee
for some measurable function $F:[0,1]\times \bbR \mapsto \bbR$ (due to independence between $Z$ and $B$ and the Markov
property of $Z$). Note that this expectation is well defined since, due to Assumption \ref{assf}, $|f(Z_1)|\leq k_1\exp(k_2 U_1)$ where $U_1$ is a Gaussian random variable. Moreover, $P_1=f(Z_1), P^{0,z}$-a.s. for every $z \in \bbR$, and the function $F$ is given by
\be \label{d:f}
F(t,z)=\int_{\bbR} f(y) G(V(t),z;1,y)\,dy,
\ee
where $G$ is the function defined in Proposition \ref{Gexists}.
 Due to Assumption \ref{assf} on $f$, it follows from Theorem 12 in Chap. I of \cite{friedman} that  $F \in C^{1,2}([0,1] \times \bbR)$
and satisfies \be \label{pdeF} F_t(t,z) + \half \sigma^2(t)
a^2(V(t),z)F_{zz}(t,z)=0. \ee
In view of (\ref{eq:w2}) and (\ref{eq:cew2}), insider's optimization problem becomes
\begin{equation}\label{OptInsider}
\sup_{\theta \in \cA(H,w)} E^{0,z} [ W_1 ^\theta ] = \sup_{\theta \in \cA(H,w)}
E^{0,z}\left[\int_0 ^1 (F(s,Z_s) - H(s, X_s)) \alpha_s ds
\right].\end{equation}
Recall that the signal $Z_t$ follows
\[
dZ_t=\sigma(t)a(V(t),Z_t)d\beta_t.
\]
Suppose that $\theta_t=\int_0^t\alpha(s,X_s,Z_s)ds$ is a solution of
the problem (\ref{OptInsider}). Then the market price is given by
$H(t,X_t)$ with
\[
dX_t =w(t,X_t)\alpha(t,X_t,Z_t)dt + w(t,X_t)dB_t
\]
Let \[
 J(t,x,z) := \esssup_{\theta \in \cA(H,w)} E^{0,z} \left[ \int_t ^1
(F(s,Z_s)-H(s,X_s))d\theta_s | X_t =x, Z_t = z \right], \quad t\in
[0,1] \] be the associated value function of the insider's problem.
Applying formally the dynamic programming principle, we get the following HJB equation:
\begin{equation}
0=\sup_{\alpha}\left(\left[w(t,x) J_x
+F(t,z)-H(t,x)\right]\alpha \right)+J_t+\frac{1}{2} w^2(t,x)
J_{xx}+\frac{1}{2}\sigma^2(t)a^2(V(t),z)J_{zz}
\end{equation}
Thus, for the existence of an optimal $\alpha$ we need
\begin{eqnarray}\label{eq:wf_1}
w(t,x) J_x+F(t,z)-H(t,x)=0\\ \label{eq:wf_2} J_t+\frac{1}{2}w^2(t,x)
J_{xx}+\frac{1}{2}\sigma^2(t)a^2(V(t),z)J_{zz}=0
\end{eqnarray}
Differentiating (\ref{eq:wf_1}) with respect to $x$
and since from (\ref{eq:wf_1}) it follows that
$J_x=\frac{H(t,x)-F(t,z)}{w(t,x)}$, we get \be \label{eq:wf_4}
w^2(t,x) J_{xx}=H_x(t,x)w(t,x)+(F(t,z)-H(t,x))w_x(t,x) \ee Plugging
(\ref{eq:wf_4}) into (\ref{eq:wf_2}) yields: \be \label{eq:wf_5}
J_t+\frac{1}{2}\left(H_x(t,x)w(t,x)+(F(t,z)-H(t,x))w_x(t,x)\right)+\frac{1}{2}\sigma^2(t)a^2(V(t),z)J_{zz}=0
\ee Differentiating (\ref{eq:wf_1}) with respect to $z$ gives
$J_{xz}=-\frac{F_z(t,z)}{w(t,x)}$ and therefore
$J_{zzx}=-\frac{F_{zz}(t,z)}{w(t,x)}$. Thus, after differentiating
(\ref{eq:wf_5}) with respect to $x$ we obtain: \be \label{eq:wf_6}
J_{tx}+\frac{1}{2}\left(H_{xx}(t,x)w(t,x)+(F(t,z)-H(t,x))w_{xx}(t,x)\right)-\sigma^2(t)
\frac{a^2(V(t),z)}{2w(t,x)}F_{zz}(t,z)=0 \ee Since differentiation
(\ref{eq:wf_1}) with respect to $t$ gives
$$J_{xt}=\frac{w_t(t,x)}{w^2(t,x)}(F(t,z)-H(t,x))-\frac{1}{w(t,x)}(F_t(t,z)-H_t(t,x)),$$
(\ref{eq:wf_6}), in view of (\ref{pdeF}), implies \be \label{eq:wf_7} (H(t,x)
-F(t,z))\left\{w_t(t,x)+\frac{w^2(t,x)}{2}w_{xx}(t,x)\right\}=w(t,x)\left(H_t(t,x)+\half
w^2(t,x)H_{xx}(t,x)\right). \ee  Since the
right hand side of (\ref{eq:wf_7}) is not a function of $z$, we
must have
\bea w_t(t,x)+\frac{w^2(t,x)}{2}w_{xx}(t,x)&=&0, \label{eq:pdew} \\
H_t(t,x)+\half w^2(t,x)H_{xx}(t,x)&=&0. \label{pdeh} \eea

\begin{remark} In Proposition \ref{SuffCondIns} we show that if the system of PDEs given by (\ref{eq:pdew}) and (\ref{pdeh}) are satisfied, then there exists an optimal strategy for the insider.
Under further assumptions one can show that the requirement on $(H,w)$ posed by the PDEs (\ref{eq:pdew}) and (\ref{pdeh}) is in fact a necessary condition for the existence of an optimal solution for the insider. Indeed, if $f$ is bounded, and therefore, $F$ and $H$ are bounded, and there exists an optimal strategy for the insider such that the value function is in $C^{1,2,2}$, then  Theorem 4.3.1 in \cite{Pham} gives that $J$ has to satisfy simultaneously (\ref{eq:wf_1}) and (\ref{eq:wf_2}).   Thus, $w$
 satisfying the nonlinear PDE above is a necessary condition in
 order to have a smooth value function $J$.
\end{remark}

An examination of (\ref{pdeh}) suggests that $X$ associated with the optimal strategy is a martingale in its own
 filtration. In view of these observations,
 recalling the bridge construction in the previous section with certain properties, it is easily seen that $a(t,x)$
 is a natural candidate for the equilibrium weight function $w^{\ast}(t,x)$.
That is the  reason why we need to assume that $a$ satisfies PDE
(\ref{PDEa}).

\begin{remark} PDE (\ref{PDEa}) admits many explicit solutions satisfying the
properties listed in Assumption \ref{AssZ}. 
Here are few examples taken from \cite{PZ}, sections from 1.1.9.10 to 1.1.9.13 and from 1.1.9.18 to 1.1.9.20.
\begin{enumerate}
\item[(i)] $a(t,z)=a_0$ for some constant $a_0 >0$, which is the case already studied by Back and Pedersen \cite{BP};
\item[(ii)] $a(t,z)=\sqrt{k_1 (z+k_2)^2 +k_3 e^{-k_1 t} }$, where $k_1,k_2,k_3$ are positive constants. Indeed, since $t$ varies on $[0,1]$, $\inf_z a(t,z) \geq \sqrt{k_3}e^{-2k_1}$, so that $a(t,z)$ is uniformly bounded away from zero.
\item[(iii)] $a(t,z)=\frac{g(z)}{\sqrt{k_1 t + k_2}}$ where $g$ is solution to $\frac{k_1}{g}=g^{\prime\prime}$ and is bounded away from zero.
\item[(iv)] (Self-similar solution) $a(t,z)=y(z/\sqrt{t})$, where $y(x)$ satisfies $y^2 y_{xx} - y_x x = 0$  and is bounded away from $0$.
\item[(v)] (Generalized self-similar solution) $a(t,z)=e^{-2k_1 t}\, y(ze^{2k_1t})$, where $y(x)$ satisfies
\[-\frac{1}{2} y^2 y_{xx} = 2k_1 x y_x -2k_1 y\]
and is bounded away from $0$.
\end{enumerate}
\end{remark}

The next proposition describes the optimal insider's strategy in terms of the behavior of the resulting optimal demand at maturity.

\begin{proposition}\label{SuffCondIns}
Assume that $(H,w)$ satisfy
\begin{equation}\label{eq:opt_str_pde_pr_func_1}
H_t(t,x)+\frac{w(t,x)^2}{2}H_{xx}(t,x)=0
\end{equation}
and
\begin{equation}\label{eq:opt_str_pde_weight_func}
w_t(t,x)+\frac{w(t,x)^2}{2}w_{xx}(t,x)=0.
\end{equation}
If $\theta^{\ast} \in \cA(H,w)$ satisfies $S_1 := H(1, X^{\ast} _1) = F(1,Z_1), P^{0,z}$-a.s. for every $z \in \bbR$, where $X^{\ast}$ is the solution to $X_t = \int_0 ^t w (s,X_s) dY_s ^{\ast}$ with $Y^{\ast} = B +\theta^{\ast}$, and $(H,w)$ is admissible for $Y^{\ast}$, then $\theta^{\ast}$
is an optimal strategy, i.e.,
\[ E^{0,z} [W_1 ^{\theta^{\ast}}] \geq E^{0,z} [W_1 ^\theta ]\]
a.s. for all $\theta \in \cA(H,w)$.
\end{proposition}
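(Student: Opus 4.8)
The proof I would give is a verification argument built around an explicit ``potential'' for the insider's problem. By the reduction already performed in (\ref{eq:w2})--(\ref{OptInsider}), it suffices to show that $\theta^{\ast}$ maximizes $E^{0,z}\left[\int_0^1 (F(s,Z_s)-H(s,X_s))\alpha_s\,ds\right]$ over $\theta\in\cA(H,w)$, where now $dX_t=w(t,X_t)\alpha_t\,dt+w(t,X_t)\,dB_t$, $dZ_t=\sigma(t)a(V(t),Z_t)\,d\beta_t$, and $B$, $\beta$ are independent; write $W^{\theta}_t:=\int_0^t (F(s,Z_s)-H(s,X_s))\alpha_s\,ds$, consistently with the paper's $W^{\theta}_1$. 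Since $x\mapsto H(t,x)$ is strictly increasing, for each $(t,z)$ there is a unique $x^{\ast}(t,z)$ with $H(t,x^{\ast}(t,z))=F(t,z)$, and $x^{\ast}$ is smooth by the implicit function theorem (using $H,F\in C^{1,2}$ and $H_x>0$). Define
\[
\Phi(t,x,z):=\int_{x^{\ast}(t,z)}^{x}\frac{H(t,y)-F(t,z)}{w(t,y)}\,dy.
\]
Because $H(t,\cdot)$ is increasing and $w>0$, the integrand has the sign of $y-x^{\ast}(t,z)$; hence $\Phi\geq 0$ everywhere, with $\Phi(t,x,z)=0$ if and only if $H(t,x)=F(t,z)$. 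Moreover $\Phi\in C^{1,2,2}$ and $\Phi_x(t,x,z)=(H(t,x)-F(t,z))/w(t,x)$.

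The crucial point is that the generator of $(X,Z)$ applied to $\Phi$ carries no $x$-dependence. Put
\[
D(t,x,z):=\Phi_t(t,x,z)+\half w^2(t,x)\Phi_{xx}(t,x,z)+\half\sigma^2(t)a^2(V(t),z)\Phi_{zz}(t,x,z).
\]
Differentiating $D$ in $x$ and substituting $H_t=-\half w^2 H_{xx}$ and $w_t=-\half w^2 w_{xx}$ (equations (\ref{eq:opt_str_pde_pr_func_1}) and (\ref{eq:opt_str_pde_weight_func})) together with $F_t=-\half\sigma^2 a^2 F_{zz}$ (equation (\ref{pdeF})), one checks that every $x$-dependent term cancels, so $D_x\equiv 0$ and $D(t,x,z)=\psi(t,z)$ for a function $\psi$ of $(t,z)$ alone. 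Evaluating $D$ at $x=x^{\ast}(t,z)$, where $\Phi$ and all its first derivatives vanish, one finds $\psi(t,z)=\half w(t,x^{\ast})H_x(t,x^{\ast})+\half\sigma^2(t)a^2(V(t),z)F_z(t,z)^2/(w(t,x^{\ast})H_x(t,x^{\ast}))\geq 0$, where $x^{\ast}=x^{\ast}(t,z)$.

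Now apply It\^o's formula to $\Phi(t,X_t,Z_t)$. Since $\Phi_x\cdot w\alpha_t=(H-F)\alpha_t$, for any $\theta\in\cA(H,w)$,
\[
\Phi(t,X_t,Z_t)=\Phi(0,0,z)+\int_0^t\psi(s,Z_s)\,ds+\int_0^t\left(H(s,X_s)-F(s,Z_s)\right)\alpha_s\,ds+N_t,
\]
where $N$ is a local martingale with $N_0=0$. Localizing with $\tau_n\uparrow 1$, rearranging and taking expectations yields
\[
E^{0,z}\left[W^{\theta}_{\tau_n}\right]=\Phi(0,0,z)-E^{0,z}\left[\Phi(\tau_n,X_{\tau_n},Z_{\tau_n})\right]+E^{0,z}\left[\int_0^{\tau_n}\psi(s,Z_s)\,ds\right].
\]
Using $\Phi\geq0$ and $\psi\geq 0$, the right-hand side is at most $\Phi(0,0,z)+E^{0,z}\left[\int_0^1\psi(s,Z_s)\,ds\right]$; letting $n\to\infty$ (monotone convergence for the $\psi$-term, and $W^{\theta}_{\tau_n}\to W^{\theta}_1$) gives $E^{0,z}[W^{\theta}_1]\leq\Phi(0,0,z)+E^{0,z}\left[\int_0^1\psi(s,Z_s)\,ds\right]$, whose right-hand side does not depend on $\theta$ because the insider's trade does not affect the law of $Z$ under $P^{0,z}$. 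Finally, for $\theta^{\ast}$ the assumption $H(1,X_1^{\ast})=F(1,Z_1)$ forces $X_1^{\ast}=x^{\ast}(1,Z_1)$, hence $\Phi(1,X_1^{\ast},Z_1)=0$; since $\theta^{\ast}\in\cA(H,w)$ the associated local martingale is a true martingale, so all these inequalities become equalities for $\theta^{\ast}$, and therefore $E^{0,z}[W^{\theta^{\ast}}_1]\geq E^{0,z}[W^{\theta}_1]$ for every $\theta\in\cA(H,w)$.

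The PDE computation showing $D_x\equiv 0$ is purely mechanical; the real work is the integrability bookkeeping in the last step. One must justify that the local martingale terms drop out in expectation (a true-martingale, or supermartingale, argument via localization combined with $\Phi\geq0$ and Fatou's lemma), that $E^{0,z}\left[\int_0^1\psi(s,Z_s)\,ds\right]<\infty$, and that $E^{0,z}[W^{\theta}_{\tau_n}]\to E^{0,z}[W^{\theta}_1]$. All of this has to be drawn from the no-doubling condition (\ref{theta_cond_2}) and from the exponential growth bound on $f$ in Assumption \ref{assf}, exploiting that $U=A(V(\cdot),Z)$ is Gaussian; the precise form $|f(z)|\leq k_1\exp(k_2 A(1,z))$ is exactly what keeps $\psi(s,Z_s)$ and the terminal wealth integrable despite the possibly fast growth of $a(V(t),\cdot)$, of $F$, and hence of $\Phi$.
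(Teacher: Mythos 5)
Your argument is correct, and it takes a genuinely different route from the paper's. The paper works with the one\-parameter family of potentials
\[
\Psi^a(t,x)=\int_{\xi(t,a)}^x \frac{H(t,u)-a}{w(t,u)}\,du+\frac{1}{2}\int_t^1 H_x(s,\xi(s,a))\,w(s,\xi(s,a))\,ds,
\]
holds $a$ fixed, and shows $\Psi^a_t+\frac{1}{2}w^2\Psi^a_{xx}=0$ exactly; It\^o in $(t,X_t)$ alone then yields a pathwise identity, after which $a$ is replaced by $F(1,Z_1)$ and the $\theta$\-dependence of $E^{0,z}[W^\theta_1]$ is isolated into $-E^{0,z}[\Psi^{F(1,Z_1)}(1,X_1)]\le 0$. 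You instead build the full two\-variable value\-function candidate $\Phi(t,x,z)$ with the running $F(t,z)$ inside, apply the full $(X,Z)$\-generator, and show the drift $D(t,x,z)$ is $x$\-free, reducing to a nonnegative $\psi(t,z)$ that is independent of $\theta$. Both arguments are mechanically equivalent (the computation $D_x\equiv 0$ indeed follows from the two PDEs for $H$ and $w$ together with $F_t+\frac{1}{2}\sigma^2 a^2 F_{zz}=0$, and $\psi(t,z)=\frac{1}{2}wH_x+\frac{1}{2}\sigma^2a^2F_z^2/(wH_x)\ge 0$ at $x=x^*(t,z)$), but they differ in flavour: yours makes the HJB/verification structure completely explicit and identifies the value function, while the paper's trick freezes $a$ and thereby avoids ever differentiating in $z$. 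That buys the paper a lighter integrability burden: its sole stochastic\-integral term is $\int_0^1\frac{H-F(1,Z_1)}{w}\,dX$, controlled directly by condition (\ref{theta_cond_2}) and Assumption \ref{assf}, whereas your $N_t$ also carries the new integrand $\Phi_z\,\sigma a\,d\beta$ with $\Phi_z=-F_z\int_{x^*}^x w^{-1}dy$, and you must additionally show $E^{0,z}[\int_0^1\psi\,ds]<\infty$ and the uniform integrability of $\Phi(\tau_n,X_{\tau_n},Z_{\tau_n})$ at $\tau_n\uparrow 1$. You correctly flag this bookkeeping as the nontrivial part; it is doable from Assumption \ref{assf} and the Gaussian nature of $U=A(V(\cdot),Z)$, but it is genuinely heavier than what the paper needs, which is the price paid for the cleaner HJB interpretation.
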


\begin{proof} We will adapt Wu's proof of his Lemma 4.2 in \cite{Wu}. Consider the function
 \begin{equation}\label{def:generalG_a}
\Psi^a(t,x):=\int_{\xi(t,a)} ^x \frac{H(t,u)-a}{w(t,u)}du+\frac{1}{2}\int_t^1H_x(s,\xi(s,a))w(s,\xi(s,a))ds
\end{equation}
where $\xi(t,a)$ is the unique solution of $H(t,\xi(t,a))=a$.
Direct differentiation with respect to $x$ gives that
\begin{equation}\label{eq:G_a_x}
\Psi^a_x(t,x)w(t,x)=H(t,x)-a.
\end{equation}
Differentiating above with respect to $x$ gives
\begin{equation}\label{eq:G_a_xx}
\Psi^a_{xx}(t,x)w^2(t,x)=w(t,x)H_x(t,x)-\left(H(t,x)-a\right)w_x(t,x).
\end{equation}
Direct differentiation of $\Psi^a(t,x)$ with respect to $t$ gives
\begin{eqnarray}
\Psi^a_t(t,x) &=&\int_{\xi(t,a)} ^x \frac{H_t(t,u)}{w(t,u)}du-\int_{\xi(t,a)} ^x \frac{(H(t,u)-a)w_t(t,u)}{w^2(t,u)}du-\frac{1}{2}H_x(t,\xi(t,a))w(t,\xi(t,a)) \nn \\ \nn
           &=&\int_{\xi(t,a)} ^x \frac{H_t(t,u)}{w(t,u)}du+\frac{1}{2}\int_{\xi(t,a)} ^x (H(t,u)-a)dw_x(t,u)-\frac{1}{2}H_x(t,\xi(t,a))w(t,\xi(t,a))\\ \label{eq:G_a_t}
           &=&\frac{1}{2}\left((H(t,x)-a)w_x(t,x)-H_x(t,x)w(t,x)\right)
\end{eqnarray}
where in order to obtain the last equality we used
(\ref{eq:opt_str_pde_pr_func_1}) and integration by parts twice on
the second integral. Combining (\ref{eq:G_a_xx}) and
(\ref{eq:G_a_t}) gives
\begin{equation} \nn
\Psi^a_t+\frac{1}{2}w(t,x)^2\Psi^a_{xx}=0.
\end{equation}
Therefore from (\ref{eq:G_a_x}) and It\^o's formula it follows that,
\begin{equation} \nn
\Psi^a(1,X_1)-\Psi^a(0,X_0)=\int_0 ^1 \frac{H(t,X_t)-a}{w(t,X_t)} dX_t,
\end{equation}
and in particular, when $a=F(1,Z_1)$,
\begin{equation} \label{eq:opt_str_G_a}
\Psi^{F(1,Z_1)}(1,X_1)-\Psi^{F(1,Z_1)}(0,X_0)=\int_0 ^1 \frac{H(t,X_t)-F(1,Z_1)}{w(t,X_t)} dX_t.
\end{equation}
Using (\ref{eq:w2}), (\ref{eq:opt_str_G_a}) and admissibility properties of $\theta$, in particular $d\theta_t=\alpha_t dt$, the insider optimization problem becomes
\begin{eqnarray}
\sup_{\theta\in {\cal{A}}(H,w)}E^{0,z} [W^{\theta}_1]&=& \sup_{\theta\in {\cal{A}}(H,w)}E^{0,z} \left[\int_0 ^1 \left(F(1,Z_1)- H(t,X_t)\right) d\theta_t \right]\\
&=& E^{0,z} \left[\Psi^{F(1,Z_1)}(0,X_0)\right]-\inf_{\theta\in {\cal{A}}(H,w)}E^{0,z}\left[\Psi^{F(1,Z_1)}(1,X_1) \right]
\end{eqnarray}
where the last equality is due to (\ref{theta_cond_2}) in Definition \ref{admissible}, and
\[ E^{0,z} \left[\left(\int_0 ^1 F(1,Z_1)dB_t \right)^2 \right] = E^{0,z} \left[ F(1,Z_1)^2 \right]E^0[B^2 _1] < \infty,\]
since $Z$ and $B$ are independent.

The conclusion follows from the fact that $\Psi^{F(1,Z_1)}(1,X_1)=\int_{\xi(1,F(1,Z_1))} ^{X_1} \frac{H(1,u)-F(1,Z_1)}{w(t,u)}du$ which, due to the fact that $H(t,x)$ is increasing and $w(t,u)$ is positive, is positive unless $X_1=\xi(1,F(1,Z_1))$, that is, $H(1,X_1)=F(1,Z_1)$. Therefore, an insider trading strategy which gives $H(1,X_1)=F(1,Z_1)$ is optimal.
\end{proof}\\

We have the following sufficient condition for a triplet $(H^{\ast},w^{\ast},\theta^{\ast})$ to be an equilibrium.

\begin{lemma} \label{SuffEqui} A triplet $(H^{\ast},w^{\ast} ,\theta^{\ast})$ where $(H^{\ast},w^{\ast})$ is an admissible pricing rule for the semimartingale $Y^{\ast}=B+\theta^{\ast}$, and $\theta^{\ast} \in \cA(H^{\ast},w^{\ast})$, is an equilibrium if it fulfills the following four conditions
\begin{enumerate}
\item \label{pdeH} $H^{\ast}(t,x)$ satisfies the PDE $H^{\ast} _t (t,x) + \frac{1}{2} w^{\ast}(t,x)^2 H^{\ast} _{xx}(t,x)=0$ for any $(t,x)\in [0,1)\times \bbR$.
\item \label{pdew} Weighting function satisfies $w^{\ast}_t(t,x)+\frac{w^{\ast}(t,x)^2}{2}w^{\ast}_{xx}(t,x)=0$.
\item \label{optY} $Y_t ^{\ast} = B_t +\theta_t ^{\ast} $ is a standard BM in its own filtration.
\item \label{bridge} $H^{\ast}(1,X_1 ^{\ast}) = f(Z_1), P^{0,z}$-a.s. for every $z \in \bbR$ where $X ^{\ast}$ is the solution to $X_t = \int_0 ^t w (s,X_s) dY_s ^{\ast}$ with $Y^{\ast} = B +\theta^{\ast}$.
\item \label{hmart} $(H^{\ast}(t,X^{\ast}_t))_{t \in [0,1]}$ is an $(\cF^{Y^{\ast}}_t)$-martingale with respect to $\bbQ$.
\end{enumerate}
\end{lemma}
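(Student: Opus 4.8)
The plan is the following: the hypotheses already grant that $(H^*,w^*)$ is an admissible pricing rule for $Y^*=B+\theta^*$ and that $\theta^*\in\cA(H^*,w^*)$, so it remains only to verify the two items of Definition \ref{eqd}, namely the market efficiency (rationality) condition and the insider optimality condition. I would treat these two in turn.

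\emph{Insider optimality.} Here I would simply invoke Proposition \ref{SuffCondIns} with $(H,w)=(H^*,w^*)$. Conditions \ref{pdeH} and \ref{pdew} are precisely its PDE hypotheses (\ref{eq:opt_str_pde_pr_func_1})--(\ref{eq:opt_str_pde_weight_func}), and $(H^*,w^*)$ is admissible for $Y^*$ by assumption; the only point left to check is the terminal requirement of that proposition, $H^*(1,X^*_1)=F(1,Z_1)$ $P^{0,z}$-a.s. for every $z$. Condition \ref{bridge} gives $H^*(1,X^*_1)=f(Z_1)$, while $F(1,Z_1)=\lim_{t\uparrow1}F(t,Z_t)=P_1=f(Z_1)$ $P^{0,z}$-a.s. by the definition (\ref{d:P}) of $P$, the representation (\ref{d:f}), and the regularity $F\in C^{1,2}$ (Theorem 12 in Chap.~I of \cite{friedman}, using Assumption \ref{assf}). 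Hence the two terminal values agree, Proposition \ref{SuffCondIns} applies, and $E^{0,z}[W_1^{\theta^*}]\ge E^{0,z}[W_1^{\theta}]$ for every $\theta\in\cA(H^*,w^*)$, which is the insider optimality condition.

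\emph{Rationality.} Since $w^*$ is bounded away from zero one has $\cF^{X^*}_t=\cF^{Y^*}_t$, so $H^*(t,X^*_t)$ is $\cF^{Y^*}$-adapted; condition \ref{optY} is the inconspicuousness requirement, which guarantees that the market maker's information is genuinely the filtration generated by the observed order flow $Y^*$. By condition \ref{hmart}, $(H^*(t,X^*_t))_{t\in[0,1]}$ is a $\bbQ$-martingale for $(\cF^{Y^*}_t)$ on the closed interval $[0,1]$, hence it is closed by its terminal value: $H^*(t,X^*_t)=\bbE^{\bbQ}[H^*(1,X^*_1)\mid\cF^{Y^*}_t]$ for all $t\in[0,1)$. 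Substituting $H^*(1,X^*_1)=f(Z_1)$ from condition \ref{bridge} yields $S_t=H^*(t,X^*_t)=\bbE^{\bbQ}[f(Z_1)\mid\cF^{Y^*}_t]$ together with $S_1=f(Z_1)$, which is exactly (\ref{mm_obj}), so $(H^*,w^*)$ is a rational pricing rule given $\theta^*$. Combining the two parts, $(H^*,w^*,\theta^*)$ meets all the requirements of Definition \ref{eqd} and is an equilibrium.

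The argument is short once Proposition \ref{SuffCondIns} is in hand; the only step needing genuine care is matching the terminal data --- verifying that the bridge endpoint $H^*(1,X^*_1)=f(Z_1)$ coincides with $F(1,Z_1)$ --- which hinges on the continuity of $F$ up to $t=1$ and on the endpoint property built into condition \ref{bridge}. I do not anticipate any further real obstacle.
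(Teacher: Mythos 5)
Your proposal is correct and follows essentially the same route as the paper: invoke Proposition \ref{SuffCondIns} (using conditions \ref{pdeH}, \ref{pdew}, \ref{bridge}) for insider optimality, and use the martingale closure from conditions \ref{bridge} and \ref{hmart} for rationality. The one small thing you add which the paper leaves tacit is the explicit matching $H^{\ast}(1,X^{\ast}_1)=f(Z_1)=F(1,Z_1)$ needed to fit condition \ref{bridge} into the hypothesis of Proposition \ref{SuffCondIns}; the paper simply cites the proposition without remarking on this, relying on $F(1,\cdot)=f(\cdot)$ being understood (it is spelled out only later in the proof of Theorem \ref{t31}).
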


\begin{proof} Let $(H^{\ast},w^{\ast},\theta^{\ast})$ be a triplet satisfying conditions 1 to 4 above. By Proposition \ref{SuffCondIns}, conditions 1,2 and 4 imply that $\theta^{\ast}$ is optimal. On the other hand, 1, 3, 4 and 5 imply that the pricing rule $(H^{\ast},w^{\ast})$ is rational. \end{proof}\\

Combining Proposition \ref{SuffEqui} and the bridge construction given in the previous section, we can finally state and prove the main result of this section. We recall from Proposition \ref{Gexists} that the function $G=G(t,x;u,y)$ is the transition density of
\be \label{xii}
d\xi_t=a(t,\xi_t)\,d\beta_t,
\ee
and from Theorem \ref{mainresult} that there exists a unique strong solution under $\mathcal F^{B,Z}$ of the following SDE:
\[dX_t = a(t,X_t)dB_t + a^2(t,X_t) \frac{\rho_x (t,X_t,Z_t)}{\rho(t,X_t,Z_t)} dt,\quad X_0 =0. \]
\begin{theorem} \label{t31}
Under Assumptions \ref{assf} and \ref{a:model}  there exists an equilibrium $(H^{\ast},w^{\ast},\theta^{\ast})$, where
\begin{enumerate}
\item[(i)] \label{optHw} $H^{\ast}(t, x) =F(V^{-1}(t),x)$ where $F$ is given by (\ref{d:f}) and $w^{\ast}(t,x) = a(t,x)$ for all $(t,x)\in [0,1]\times \bbR$;
\item[(ii)] $\theta_t ^{\ast} = \int_0 ^t \alpha^{\ast} _s ds$ where $\alpha^{\ast} _s = a(s,X_s) \frac{\rho_x (s,X_s,Z_s)}{\rho(s,X_s,Z_s)}$ with $\rho(t,x,z)=G(t,x;V(t),z)$ and the process $X^{\ast}$ is the unique strong solution under $\mathcal F^{B,Z}$ of the following SDE:
\[dX_t = a(t,X_t)dB_t + a^2(t,X_t) \frac{\rho_x (t,X_t,Z_t)}{\rho(t,X_t,Z_t)} dt,\quad X_0 =0. \]
\end{enumerate}
\end{theorem}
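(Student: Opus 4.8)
The plan is to reduce everything to the sufficient condition for equilibrium in Lemma \ref{SuffEqui}: once the admissibility requirements of Definitions \ref{def:prule} and \ref{admissible} are checked and its five conditions are verified for the explicit triplet $(H^{\ast},w^{\ast},\theta^{\ast})$, Proposition \ref{SuffCondIns} and Lemma \ref{SuffEqui} immediately deliver the equilibrium. The observation linking the candidate to Section \ref{general} is that, with $w^{\ast}=a$ and $Y^{\ast}=B+\theta^{\ast}$, the equation $dX_t=w^{\ast}(t,X_t)\,dY^{\ast}_t$ becomes
\[
dX_t=a(t,X_t)\,dB_t+a^2(t,X_t)\frac{\rho_x(t,X_t,Z_t)}{\rho(t,X_t,Z_t)}\,dt ,
\]
which is precisely the dynamic-bridge SDE (\ref{SDEx}). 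Hence Corollary \ref{xexists} supplies a unique non-exploding strong solution $X^{\ast}$ on $[0,1)$, adapted to $\cF^{B,Z}$, with $(X^{\ast},Z)$ Markov of law $P^{0,z}$ and $X^{\ast}_1=Z_1$ $P^{0,z}$-a.s. This gives condition \ref{bridge} of Lemma \ref{SuffEqui}: indeed $H^{\ast}(1,x)=F(V^{-1}(1),x)=F(1,x)$ since $V(1)=1$, and $F(1,\cdot)=f$ by (\ref{d:f}) and the defining property (\ref{eq:fsb}) of the transition density $G$, so $H^{\ast}(1,X^{\ast}_1)=f(X^{\ast}_1)=f(Z_1)$. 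For condition \ref{optY}, $w^{\ast}=a\geq\epsilon>0$ forces $\cF^{Y^{\ast}}=\cF^{X^{\ast}}$, and Theorem \ref{mainresult} yields $dX^{\ast}_t=a(t,X^{\ast}_t)\,dB^X_t$ for an $\cF^{X^{\ast}}$-Brownian motion $B^X$; dividing by $a$ gives $Y^{\ast}=B^X$, a Brownian motion in its own filtration.

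Next I would dispatch the two PDEs. Condition \ref{pdew}, $w^{\ast}_t+\half(w^{\ast})^2 w^{\ast}_{xx}=0$, is exactly Assumption \ref{a:model}.2, equation (\ref{PDEa}). For condition \ref{pdeH}, write $H^{\ast}(t,x)=F(s,x)$ with $s=V^{-1}(t)$, so that $(V^{-1})'(t)=1/\sigma^2(s)$ and $V(s)=t$; the chain rule and (\ref{pdeF}) give
\[
H^{\ast}_t(t,x)+\half a^2(t,x)H^{\ast}_{xx}(t,x)=\frac{1}{\sigma^2(s)}\left(F_s(s,x)+\half\,\sigma^2(s)\,a^2(V(s),x)F_{zz}(s,x)\right)=0 .
\]
Here $H^{\ast}\in C^{1,2}$ follows from $F\in C^{1,2}$ (Theorem 12 in Chap.~I of \cite{friedman}, using Assumption \ref{assf}) and smoothness of $V^{-1}$, and the strict monotonicity of $x\mapsto H^{\ast}(t,x)$ demanded by Definition \ref{def:prule} follows by writing $H^{\ast}(t,x)=E[f(\xi_1)\mid\xi_t=x]$ for the diffusion $\xi$ of (\ref{xii}) and applying a pathwise comparison argument (legitimate since $a\in C^{1,2}$ is locally Lipschitz) together with strict monotonicity of $f$ (Assumption \ref{assf}).

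The core is condition \ref{hmart}. By Corollary \ref{pdfU}, $\rho(t,X^{\ast}_t,\cdot)=G(t,X^{\ast}_t;V(t),\cdot)$ is the $\cF^{X^{\ast}}_t$-conditional density of $Z_t$; on the other hand, independence of $B$ from $Z$ and the Markov property of $Z$ give $\bbE^{\bbQ}[f(Z_1)\mid\cF^{B,Z}_t]=F(t,Z_t)$ (cf. (\ref{d:P})--(\ref{d:f})). Since $X^{\ast}$ is $\cF^{B,Z}$-adapted and $\bbE^{\bbQ}[\cdot\mid\cF^{X^{\ast}}_t]=\bbE[\cdot\mid\cF^{X^{\ast}}_t]$, the tower property followed by the Chapman--Kolmogorov identity for $G$ yields
\[
\bbE^{\bbQ}[f(Z_1)\mid\cF^{Y^{\ast}}_t]=\int_{\bbR}F(t,z)\,G(t,X^{\ast}_t;V(t),z)\,dz=\int_{\bbR}f(y)\,G(t,X^{\ast}_t;1,y)\,dy=H^{\ast}(t,X^{\ast}_t),
\]
where the last equality uses (\ref{d:f}) and $H^{\ast}(t,x)=F(V^{-1}(t),x)$. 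Thus $(H^{\ast}(t,X^{\ast}_t))_{t<1}$ is a $\bbQ$-martingale, uniformly integrable because it is closed by $f(Z_1)$: by Assumption \ref{assf}, $|f(Z_1)|\leq k_1 e^{k_2 A(1,Z_1)}$ with $A(1,Z_1)=U_1$ Gaussian (Remark \ref{rem:AG}), so $f(Z_1)\in L^2$. This settles \ref{hmart} and the rationality (\ref{mm_obj}), and letting $t\uparrow1$ re-confirms $H^{\ast}(1,X^{\ast}_1)=f(Z_1)$. The same $L^2$ bound gives the no-doubling condition (\ref{theta_cond_2}) via $E^{0,z}[\int_0^1 H^{\ast 2}(t,X^{\ast}_t)\,dt]\leq\int_0^1 E^{0,z}[f(Z_1)^2]\,dt<\infty$, while the remaining admissibility items ($\theta^{\ast}$ absolutely continuous by definition; $(X^{\ast},Z)$ Markov of law $P^{0,z}$; $w^{\ast}=a\in C^{1,2}$ bounded away from $0$; unique non-exploding strong solution of $dX=w^{\ast}(t,X)\,dY^{\ast}$ on $\cF^{B,Z}$ and on $\cF^{Y^{\ast}}$) are contained in Theorem \ref{mainresult}/Corollary \ref{xexists} together with local Lipschitz continuity of $a$. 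With all the hypotheses of Lemma \ref{SuffEqui} in force, it applies and yields the equilibrium.

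I expect the main obstacle to be condition \ref{hmart}: establishing $H^{\ast}(t,X^{\ast}_t)=\bbE^{\bbQ}[f(Z_1)\mid\cF^{Y^{\ast}}_t]$ is where the bridge construction of Section \ref{general} (existence of $X^{\ast}$ and $X^{\ast}_1=Z_1$), the nonlinear-filtering identification of the law of $Z_t$ given $\cF^{X^{\ast}}_t$ (Corollary \ref{pdfU}), and the semigroup property of $G$ have to be combined, with care about the interplay between the filtrations $\cF^{B,Z}$, $\cF^{X^{\ast}}=\cF^{Y^{\ast}}$ and the measures $\bbP$ and $\bbQ$. The pathwise comparison needed for strict monotonicity of $H^{\ast}$ in $x$ is a secondary technical point, and the two PDE verifications are routine.
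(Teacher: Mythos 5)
Your plan follows the paper's overall strategy — verify admissibility of $(H^{\ast},w^{\ast})$ and $\theta^{\ast}$, then check the five conditions of Lemma \ref{SuffEqui} — and most of your individual steps are correct, two of them by routes different from the paper's.

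Where you diverge (both alternatives are valid): for strict monotonicity of $x\mapsto H^{\ast}(t,x)$ the paper computes $F_z$ explicitly by a change of variable and integration by parts using the Gaussian form of $\Gamma$ from Lemma \ref{j:tech}, while you invoke a one-dimensional pathwise comparison theorem plus strict monotonicity of $f$; that works. For condition \ref{hmart} the paper argues more directly: since in its own filtration $X^{\ast}$ satisfies $dX^{\ast}_t=a(t,X^{\ast}_t)\,dY^{\ast}_t$ with $Y^{\ast}$ a Brownian motion, $X^{\ast}$ has the same transition density $G$ as $\xi$ in (\ref{xii}), and since $X^{\ast}_1=Z_1$ a.s., $\bbE^{\bbQ}[f(Z_1)\mid\cF^{X^{\ast}}_t]=\bbE^{\bbQ}[f(X^{\ast}_1)\mid\cF^{X^{\ast}}_t]=\int f(y)G(t,X^{\ast}_t;1,y)\,dy$. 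Your tower-property-plus-Chapman--Kolmogorov computation through Corollary \ref{pdfU} reaches the same conclusion; it is a little longer and technically requires extending Corollary \ref{pdfU} from bounded $f$ to the exponentially growing $F(t,\cdot)$, but that is a routine truncation.

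There is, however, one genuine gap: the no-doubling condition (\ref{theta_cond_2}). You write $E^{0,z}[H^{\ast 2}(t,X^{\ast}_t)]\leq E^{0,z}[f(Z_1)^2]$ on the grounds that $H^{\ast}(t,X^{\ast}_t)$ is a martingale closed by $f(Z_1)$. But $H^{\ast}(t,X^{\ast}_t)=\bbE^{\bbQ}[f(Z_1)\mid\cF^{X^{\ast}}_t]$ is a $\bbQ$- (equivalently $\bbP$-) martingale, \emph{not} a $P^{0,z}$-martingale: under $P^{0,z}$ the initial value $Z_0=z$ is known, so $X^{\ast}$ retains its nonzero drift $a^2\rho_x/\rho$ in $\cF^{X^{\ast}}$, and It\^o's formula with the PDE $H^{\ast}_t+\tfrac12 a^2 H^{\ast}_{xx}=0$ gives $dH^{\ast}(t,X^{\ast}_t)=H^{\ast}_x a^2\frac{\rho_x}{\rho}\,dt+H^{\ast}_x a\,dB_t$, a process with nontrivial drift. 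So Jensen's inequality does not yield the stated bound under $P^{0,z}$; at best you would get the integrability for $\mu$-a.e. $z$, whereas Definition \ref{admissible} and Lemma \ref{j:tech} require it for every $z$. The paper closes this by solving the transformed SDE for $R_t=A(t,X^{\ast}_t)$ explicitly (using that $b$ depends only on $t$ under Assumption \ref{a:model}), showing $R_t$ is Gaussian with uniformly bounded mean and variance, bounding $|F|$ by $K\exp(k_2 A(V(t),z))$ via Assumption \ref{assf} and the moment generating function of a Gaussian increment, and concluding $E^{0,z}[H^{\ast 2}(t,X^{\ast}_t)]\leq K^2 E^{0,z}[\exp(2k_2 R_t)]$ is bounded in $t$. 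You need an argument of this kind; the martingale shortcut does not substitute for it.
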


\begin{proof}
We will first show that $(H^{\ast},w^{\ast})$ is admissible in the sense of Definition \ref{def:prule}. Note that since $F \in C^{1,2}$,  so is $H^{\ast}\in C^{1,2}([0,1]\times \bbR)$ and $w^{\ast}$ is bounded away from $0$ since $a(t,z)$ is assumed to be bounded away from $0$ in Assumption \ref{a:model}. We also have that $X^*$ is the unique strong solution to
\[
X_t=\int_0^t w^{\ast}(s,X_s)dY^{\ast}_s,
\]
on $(\Om, \cG, (\cF^{Y^{\ast}}_t), \bbQ)$ by condition ii) of the theorem and that $dX_t= a(t,X_t)dY^{\ast}_t= w^{\ast}(t,X_t)dY^{\ast}_t$.
In order to complete the proof of admissibility we next show that $x \mapsto H^{\ast}(t,x)$ is strictly increasing for every $t \in[0,1]$. Observe that this is equivalent to the analogous property for $F$. First, using  (\ref{d:f})  and (\ref{GviaGamma}) we obtain
\bean
F(t,z)&=&\int_{\bbR} f(y) \Gamma(V(t),A(V(t),z);1,A(1,y))\,dA(1,y).\\
&=&\int_{A(1,-\infty)}^{A(1,\infty)} f(A^{-1}(1,y))\Gamma(V(t),A(V(t),z);1,y)\,dy\\
&=&\int_{A(1,-\infty)}^{A(1,\infty)} f(A^{-1}(1,y))q\left(1-V(t),A(V(t),z)+ c(t),y\right)\,dy,
\eean
where the last line follows from Lemma \ref{j:tech} and $q$ is the transition density of standard Brownian motion given by
\[
q(t,x,y)=\frac{1}{\sqrt{2 \pi t}}\exp\left(-\frac{(x-y)^2}{2 t}\right),
\]
and $c(t)=\int_{V(t)}^1 b(s)ds$. Due to bounds on $f$ we can differentiate inside the integral to get that
\bean
F_z(t,z)&=&\int_{A(1,-\infty)}^{A(1,\infty)} f(A^{-1}(1,y))q_x\left(1-V(t),A(V(t),z)+ c(t),y\right)\frac{1}{a(V(t),z)}\,dy\\
&=&-\int_{A(1,-\infty)}^{A(1,\infty)} f(A^{-1}(1,y))q_y\left(1-V(t),A(V(t),z)+ c(t),y\right)\frac{1}{a(V(t),z)}\,dy\\
&=& \int_{A(1,-\infty)}^{A(1,\infty)} f'(A^{-1}(1,y))\frac{a(1, A^{-1}(1,y))}{a(V(t),z)}q\left(1-V(t),A(V(t),z)+ c(t),y\right)\,dy >0,
\eean
where the third equality follows from integration by parts, which is valid due to Assumption \ref{assf}. The final strict inequality is due to the fact that $f$ is strictly increasing and $a$ is strictly positive. Therefore, $(H^{\ast},w^{\ast})$ is admissible for the semimartingale $Y^{\ast}=B+\theta^{\ast}$.

Next, we turn to verify that $\theta^{\ast} \in \cA(H^{\ast},w^{\ast})$. By construction $\theta^{\ast}$ is absolutely continuous. Moreover, the conditions 2 and 3 of Definition \ref{admissible} follow from Theorem \ref{mainresult}. Finally, condition 4 follows from Lemma \ref{j:tech}.

To finish the proof, let us verify that the triplet
$(H^{\ast},w^{\ast},\theta^{\ast})$ given in the statement satisfy the five
conditions of  Lemma \ref{SuffEqui}. First, $H^{\ast}$ as defined  satisfies condition \ref{pdeH} in Lemma \ref{SuffEqui} due to (\ref{pdeF}). The second condition is trivially satisfied due to Assumption \ref{a:model}. For the third condition observe that $X^{\ast}$ is a local martingale in its own filtration due to Theorem \ref{mainresult}. However, since  $a(t,z)$ is uniformly bounded
away from $0$, it follows that the natural filtrations of $X^{\ast}$ and
$Y^{\ast}$ coincide. The conclusion that  $Y^{\ast}$ is a Brownian motion in its own
filtration follows as soon as one observes that $dY^{\ast}_t=\frac{1}{a(t,X^{\ast}_t)}dX^{\ast}_t$, i.e. $Y^{\ast}$ is a local martingale with $[Y^{\ast},Y^{\ast}]_t =t$.

In order to verify the fourth condition, observe that $H^{\ast}(1,x)=F(1,x)=f(x)$. Since by Theorem \ref{mainresult} we have $X^{\ast}_1=Z_1, P^{0,z}$-a.s., the condition holds.

Finally, to demonstrate the martingale property of $H^{\ast}(t,X^{\ast}_t)$ observe that the transition density of $X^{\ast}$ is given by $G(t,x;u,z)$ since in its own filtration
\[
dX^{\ast}_t=a(t,X_t^{\ast})dY^{\ast}_t,
\]
and $Y^{\ast}$ is a Brownian motion so that $X^{\ast}$ satisfies the same SDE (\ref{xii}) as the process $\xi$.
Therefore,
\bean
\bbE^{\bbQ}[f(Z_1)|\cF^{X^{\ast}}_t]&=&\bbE^{\bbQ}[f(X^{\ast}_1)|\cF^{X^{\ast}}_t] \\
&=&\int_{\bbR}f(y) G(t,X^{\ast}_t;1,y)\,dy\\
&=&F(V^{-1}(t), X^{\ast}_t)  \\
&=&H(t,X^{\ast}_t),
\eean
where one to the last equality is due to (\ref{d:f}).
\end{proof}
\begin{remark} \label{r:ia} Note that it follows from Corollary \ref{pdfU} that the conditional density of  $Z_t$ given $\cF^{X^{\ast}}_t$ is $G(t, X^{\ast}_t; V(t), z)$. Note that $G(t,x;u,z)$ converges to the delta function as $t$ converges to $u$. Therefore,  the closer $V(t)$ is  to $t$, the smaller is the uncertainty of the market maker about the value of $Z_t$. Hence, in our case,  as in \cite{BP}, $V(t)-t$ is  a good measurement of the informational advantage of the insider.
\end{remark}
\appendix
\section{Appendix}

 \begin{proposition} \label{hxlim} Suppose  Assumptions \ref{AssZ}, \ref{fs} and \ref{bt} are satisfied. Then, $\frac{h_x}{h}:[0,1] \times \bbR \times [0,1] \times \bbR \mapsto \bbR$ as defined in (\ref{defh}) is (uniformly) bounded.

\end{proposition}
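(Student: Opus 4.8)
The plan is to bypass the crude Gaussian estimates (\ref{est0})--(\ref{est1}), which on their own do not produce the bound (the upper and lower Gaussian exponents for $\Gamma$ do not match, yet the cancellation in $\frac{\Gamma_x}{\Gamma}+\frac{x-z}{u-t}$ is exact, as one sees on the constant-drift example where $h_x/h\equiv -b_0$), and instead to get a probabilistic representation of $h$ from which $\partial_x\log h$ is visibly bounded. Write $q(s,x,y)=\frac{1}{\sqrt{2\pi s}}e^{-(x-y)^2/(2s)}$, so that $h(t,x;u,z)=\Gamma(t,x;u,z)/q(u-t,x,z)$ by (\ref{defh}), and fix $0\le t<u\le 1$. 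Since $\Gamma\in C^{1,2,1,2}$ by Proposition \ref{Gexists} and $q>0$ is smooth, $h$ is strictly positive and differentiable in $x$, so $\frac{h_x}{h}=\partial_x\log h$ is well defined and it suffices to show that $x\mapsto\log h(t,x;u,z)$ is Lipschitz with a constant independent of $(t,x,u,z)$.

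First I would recall from the proof of Proposition \ref{Gexists} that $\Gamma$ is the transition density of $d\zeta_s=d\beta_s+b(s,\zeta_s)\,ds$ and that $b$ is bounded (Assumption \ref{fs}). Let $\bbP^{t,x}$ be the law of a Brownian motion $W$ with $W_t=x$, and $\bbE^{t,x}$ the corresponding expectation. By Girsanov's theorem (Novikov holds since $b$ is bounded, so the exponential martingale is a true martingale on $[0,1]$) and disintegration over the terminal value, $h(t,x;u,z)=\bbE^{t,x}[\mathcal E_u\mid W_u=z]$ with $\mathcal E_u=\exp\!\big(\int_t^u b(s,W_s)\,dW_s-\half\int_t^u b^2(s,W_s)\,ds\big)$. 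Next I would remove the stochastic integral: put $\mathpzc{B}(s,x):=\int_0^x b(s,y)\,dy$, which by Assumptions \ref{fs} and \ref{bt} is $C^2$ in $x$ with $\mathpzc{B}_{xx}=b_x$, and absolutely continuous in $s$ with $\mathpzc{B}_s(s,x)=\int_0^x b_t(s,y)\,dy$ locally bounded; a generalized It\^o formula applied to $\mathpzc{B}(s,W_s)$ gives, a.s., $\int_t^u b(s,W_s)\,dW_s=\mathpzc{B}(u,W_u)-\mathpzc{B}(t,x)-\int_t^u(\mathpzc{B}_s+\half b_x)(s,W_s)\,ds$. Hence, with $G:=\mathpzc{B}_s+\half b_x+\half b^2$,
\[
h(t,x;u,z)=e^{\mathpzc{B}(u,z)-\mathpzc{B}(t,x)}\,\Theta(t,x;u,z),\qquad \Theta(t,x;u,z):=\bbE^{t,x}\!\left[e^{-\int_t^u G(s,W_s)\,ds}\ \middle|\ W_u=z\right],
\]
and $\Theta=h\,e^{-(\mathpzc{B}(u,z)-\mathpzc{B}(t,x))}\in(0,\infty)$. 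The key gain is that $G(s,\cdot)$ is \emph{globally Lipschitz, uniformly in $s$}: $\mathpzc{B}_s(s,\cdot)$ has Lipschitz constant $\le\|b_t\|_\infty$, $b_x(s,\cdot)$ is Lipschitz by Assumption \ref{fs}, and $|b^2(s,x)-b^2(s,x')|\le 2\|b\|_\infty\|b_x\|_\infty|x-x'|$; write $K$ for the resulting constant, depending only on the bounds in Assumptions \ref{fs} and \ref{bt}.

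For the Lipschitz estimate I would use that, conditionally on $W_t=x$, $W_u=z$, the path $(W_s)_{s\in[t,u]}$ has the law of $\big(x\tfrac{u-s}{u-t}+z\tfrac{s-t}{u-t}+B^0_s\big)$ for a standard Brownian bridge $B^0$ from $0$ to $0$ that does not depend on $x$. Thus $\Theta(t,x;u,z)=\bbE[e^{-\Phi_x}]$ with $\Phi_x=\int_t^u G\big(s,\,x\tfrac{u-s}{u-t}+z\tfrac{s-t}{u-t}+B^0_s\big)\,ds$, and $|\Phi_x-\Phi_{x'}|\le K\int_t^u\tfrac{u-s}{u-t}\,ds\,|x-x'|=\tfrac{u-t}{2}K|x-x'|$. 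The elementary bound $\Theta(t,x;u,z)=\bbE[e^{-\Phi_{x'}}e^{\Phi_{x'}-\Phi_x}]\le e^{\frac{u-t}{2}K|x-x'|}\,\Theta(t,x';u,z)$, together with its symmetric counterpart, shows that $x\mapsto\log\Theta(t,x;u,z)$ is Lipschitz with constant $\le\tfrac{u-t}{2}K\le\tfrac12 K$. Combining with $\partial_x\mathpzc{B}(t,x)=b(t,x)$ and $|b|\le\|b\|_\infty$ yields that $x\mapsto\log h(t,x;u,z)$ is Lipschitz with constant $\le\|b\|_\infty+\tfrac12 K$, uniformly in $(t,u,z)$, i.e. $\big|\tfrac{h_x}{h}(t,x;u,z)\big|\le\|b\|_\infty+\tfrac12 K$ for all $0\le t<u\le 1$ and $x,z\in\bbR$, which is the claim.

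The only genuinely delicate point is the It\^o reduction: $b$ is merely absolutely continuous in $t$ with bounded $b_t$ (Assumption \ref{bt}), not $C^1$ in $t$, so $\mathpzc{B}$ is not classically $C^{1,2}$ and one must appeal to a generalized It\^o formula (valid here since $\mathpzc{B}_{xx}=b_x$ is continuous and $\mathpzc{B}_s$ is locally bounded); one also checks that the conditional expectation $\Theta$ is finite and strictly positive, which is immediate from $\Gamma$ being a finite, strictly positive transition density. Everything else is bookkeeping with the Lipschitz constants furnished by Assumptions \ref{fs}--\ref{bt}.
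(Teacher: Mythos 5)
Your proof is correct, and it rests on the same Girsanov decomposition as the paper's: the paper's $I(u,z)=\int_0^z b(u,y)\,dy$ is your $\mathpzc{B}$, the paper's $N_u$ integrand $I_t+\tfrac12 b_x+\tfrac12 b^2$ is your $G$, and the paper's representation $h(t,x;u,z)=e^{I(u,z)-I(t,x)}E^{x,q}_t[e^{-N_u}\mid\zeta_u=z]$ is exactly your $h=e^{\mathpzc{B}(u,z)-\mathpzc{B}(t,x)}\Theta$. Where you diverge is the estimation of $\partial_x\log\Theta$. The paper proves a separate lemma (Lemma \ref{derivativeN}) that differentiates under the conditional expectation, i.e. it shows $\partial_x E^{x,q}_t[e^{-N_u}\mid\zeta_u=z]=-E^{x,q}_t[e^{-N_u}\,\partial_x N_u\mid\zeta_u=z]$ and then bounds $\partial_x N_u$ pointwise; justifying that interchange (and the later identity involving test functions $f$) is the most delicate part of their argument and requires an auxiliary domination estimate on $e^{-N_u}$. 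You replace this with the Brownian bridge parametrization $W_s=x\tfrac{u-s}{u-t}+z\tfrac{s-t}{u-t}+B^0_s$, under which the dependence of the conditional law on $x$ is a deterministic translation of the path, so the Lipschitz bound on $\log\Theta$ follows from an elementary pathwise comparison $\Theta(t,x;u,z)\le e^{\frac{u-t}{2}K|x-x'|}\Theta(t,x';u,z)$ and no differentiation under the expectation is needed. This is cleaner and, as a bonus, yields the slightly sharper factor $\tfrac{u-t}{2}$ in place of $u-t$; the resulting bound uses the same ingredients from Assumptions \ref{fs} and \ref{bt} (boundedness of $b$, $b_x$, $b_t$ and Lipschitz continuity of $b_x$). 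The only point you should make fully explicit is the applicability of the generalized It\^o formula to $\mathpzc{B}(s,W_s)$ when $b$ is merely absolutely continuous in $t$ with bounded $b_t$, and you flag this correctly; the paper appeals to the same regularity (cf.\ Remark \ref{justify:bt}).
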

In order to prove the proposition above we need a few preliminary results. The first one is the following classical result due  to  \cite{aronson}.
\begin{lemma} \label{Gambounds} There exist positive constants, $\alpha_1, \alpha_2, M_1,$ and  $M_2$ such that
\[
M_1\, q(\alpha_1 (u-t), x, z) \leq \Gamma(t,x ; u,z) \leq M_2\, q(\alpha_2 (u-t), x, z),
\]
for all $(x,z) \in \bbR^2$ and $u >t$.
\end{lemma}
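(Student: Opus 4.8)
The estimate is the classical two-sided Gaussian (``Aronson'') bound for fundamental solutions of uniformly parabolic equations: rewriting (\ref{eq:pde1}) in divergence form as $w_u=\partial_z\big(\half w_z-b\,w\big)$, the principal part has the constant coefficient $\half$ and the drift $b$ is bounded by Assumption~\ref{fs}, so the statement follows verbatim from \cite{aronson} with $\alpha_1,\alpha_2,M_1,M_2$ depending only on $\|b\|_\infty$; that is the one-line proof. Because the principal part here is \emph{constant}, however, I would prefer to give a short self-contained proof that uses only probabilistic tools already present in the paper, and I sketch its plan below.

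Since $\Gamma$ is the transition density of $\zeta$, the solution of $d\zeta_s=d\beta_s+b(s,\zeta_s)\,ds$ (Proposition~\ref{Gexists}), Girsanov's theorem together with conditioning on the terminal value gives, for $u>t$,
\[
\Gamma(t,x;u,z)=q(u-t,x,z)\,\bbE^{\mathrm{br}}\!\left[\exp\!\Big(\int_t^u b(s,\omega_s)\,d\omega_s-\half\int_t^u b^2(s,\omega_s)\,ds\Big)\right]=:q(u-t,x,z)\,h(t,x;u,z),
\]
where $\bbE^{\mathrm{br}}$ is expectation under the law of a Brownian bridge $\omega$ from $x$ at time $t$ to $z$ at time $u$, which has the canonical decomposition $d\omega_s=dW_s+\tfrac{z-\omega_s}{u-s}\,ds$ for a Brownian motion $W$, and $q(\tau,y,y')=(2\pi\tau)^{-1/2}\exp(-(y-y')^2/2\tau)$. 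The point is that it now suffices to bound $h$ from above and below by $\exp\!\big(\pm C (x-z)^2/(u-t)\big)$, with $C$ as small as we wish in the upper bound (at the price of a larger constant): multiplying such a bound by $q(u-t,x,z)$ and completing the square yields precisely $M_i\,q(\alpha_i(u-t),x,z)$, with $\alpha_i$ determined by $C$.

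For the lower bound I would apply Jensen's inequality: $h\ge\exp\big(\bbE^{\mathrm{br}}[\int_t^u b\,d\omega-\half\int_t^u b^2\,ds]\big)$. Here $\bbE^{\mathrm{br}}[\int_t^u b\,dW_s]=0$; $\bbE^{\mathrm{br}}[\int_t^u \tfrac{|z-\omega_s|}{u-s}\,ds]\le|x-z|+c_0$, where $c_0$ is a universal constant obtained by rescaling the bridge to $[0,1]$ (one uses $u-t\le1$); and $\half\int_t^u b^2\,ds\le\half\|b\|_\infty^2$. Hence $h\ge c\,\exp(-\|b\|_\infty|x-z|)$, and the elementary Young inequality $\|b\|_\infty|x-z|\le\varepsilon(x-z)^2/(u-t)+\|b\|_\infty^2/(4\varepsilon)$ converts this into the required Gaussian lower bound. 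For the upper bound I would write $\int_t^u b\,d\omega=\int_t^u b\,dW_s+\int_t^u b\,\tfrac{z-\omega_s}{u-s}\,ds$ and use Cauchy--Schwarz: the factor coming from the martingale part has a bounded exponential moment ($\bbE^{\mathrm{br}}[\exp(2\int b\,dW_s-\int b^2\,ds)]\le e^{\|b\|_\infty^2}$, an exponential-martingale computation valid because $b$ is bounded), while the factor coming from the drift part is at most $e^{2\|b\|_\infty|x-z|}$ times a finite universal constant, again by rescaling the bridge to $[0,1]$. The same Young inequality, now with $\varepsilon<\half$, then gives $h\le C'\exp(\varepsilon(x-z)^2/(u-t))$ and hence the Gaussian upper bound.

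The technical points are: (i) justifying the Girsanov/conditioning representation up to $s=u$, where the bridge drift $\tfrac{z-\omega_s}{u-s}$ is singular but integrable because $b$ is bounded (standard; one can also read off the representation from the It\^o-converted form used in the proof of Proposition~\ref{hxlim}); and (ii) the finiteness, for every $\lambda>0$, of the bridge moment $\bbE[\exp(\lambda\int_0^1 |\widehat B_r|/(1-r)\,dr)]$ which appears after rescaling — this I would obtain by substituting $1-r=e^{-v}$ and using that $s\mapsto e^{s/2}\widehat B_{1-e^{-s}}$ is a stationary Ornstein--Uhlenbeck process, so that the integral is dominated by $\int_0^\infty e^{-v/2}|O_v|\,dv$ with $O$ stationary Gaussian. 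The genuine ``obstacle'' is thus only how much of this to spell out versus simply quoting \cite{aronson}; there is no conceptual difficulty once one exploits the constant principal part.
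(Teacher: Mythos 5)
The paper gives no proof beyond citing \cite{aronson}, so your first paragraph is exactly what the paper does: the claim is Aronson's two-sided Gaussian bound for uniformly parabolic divergence-form operators, applicable here because the principal coefficient is the constant $\tfrac12$ and $b,b_x$ are bounded, and that reference is the whole argument.

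Your self-contained probabilistic sketch is a genuinely different route and is sound in outline. Two small remarks. First, the representation $\Gamma=q\cdot h$ with $h$ a bridge expectation is best read off, as you indeed note, from the It\^o-converted form in the proof of Proposition~\ref{hxlim}; writing $\int_t^u b\,d\omega$ directly as an It\^o integral under the bridge law is fine but requires exactly that conversion to justify, since the raw stochastic integral $\int b\,dW$ is not a pathwise functional one can condition na\"{\i}vely. Second, the assertion that $v\mapsto e^{v/2}\widehat B_{1-e^{-v}}$ is a \emph{stationary} Ornstein--Uhlenbeck process is not quite right: its variance is $1-e^{-v}$ and its covariance for $v<w$ is $e^{(v-w)/2}-e^{-(v+w)/2}$, so it is only asymptotically stationary. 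This does not damage the argument, because the process is a centered Gaussian process with variance $\le 1$, hence dominated in the relevant sense by a stationary OU process, and the exponential moment $\bbE\exp\bigl(\lambda\int_0^\infty|\widehat B_{1-e^{-v}}|\,dv\bigr)<\infty$ still follows; but the phrasing should be corrected before the sketch could be promoted to a proof. The trade-off between the two routes is the usual one: citing \cite{aronson} is shorter and covers variable principal coefficients; your Girsanov/bridge computation is longer but exploits the constant diffusion coefficient and stays entirely within the probabilistic toolkit already used elsewhere in the Appendix (Proposition~\ref{hxlim}), which some readers may find more transparent.
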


Next  we need to obtain estimates on the function $h_x/h$. This will be done by following the approach employed in \cite{bc02}. For this purpose define the martingale $L$ by
\[ dL_u= - L_u b(u, \zeta_u)d\beta_u, \qquad u \geq t \] with  $L_t=1$ and let
\[ I(u,z):=\int_0^z b(u,y)dy \qquad N_u:=\int_t^u \left\{I_t(s,
\zeta_s)+ \half b_x(s,\zeta_s)+ \half b^2(s,\zeta_s)\right\}ds. \]
Recall that $\zeta_s = A(s,\xi_s)$ and $d\zeta_s = d \beta_s + b(s,\zeta_s)ds$, where the function $b$ has been defined in  (\ref{bee}).
\begin{remark} \label{justify:bt} Notice that Assumptions \ref{fs} and \ref{bt} ensure that the above formulation make sense.
\end{remark}

 Then,
\[
L^{-1}_u=\exp\left\{I(u,\zeta_u)-I(t,\zeta_t) -N_u\right\}
\]
and a straightforward application of Girsanov's theorem yields
\[
\Gamma(t,x;u,z)=\exp\left(I(u,z)-I(t,x)\right)E^{x, q}_t[\exp(-N_u)|\zeta_u=z]\,q(u-t,x,z),
\]
where $E^{x, q}_t$ is the expectation operator with respect to the
law of the standard Brownian motion starting at $x$ at time $t$. Therefore, (\ref{defh}) becomes
\[h(t,x;u,z)=\exp\left(I(u,z)-I(t,x)\right)E^{x, q}_t[\exp(-N_u)|\zeta_u=z].\]

Observe that $\frac{\partial I(t,x)}{\partial x}=b(t,x)$, which is bounded. Therefore, in order to establish the uniform boundedness of $h_x/h$, we need estimates on
\[
\frac{\frac{\partial}{\partial
x}E^{x, q}_t[\exp(-N_u)|\zeta_u=z]}{E^{x, q}_t[\exp(-N_u)|\zeta_u=z]}.
\]
The next lemma is going to give us an alternative representation of the numerator in the above expression which allows us to obtain a uniform bound on $h_x/h$.

\begin{lemma}\label{derivativeN} Suppose Assumptions \ref{fs} and \ref{bt} are satisfied and  $\zeta_t=x$. Then we have
\[
\frac{\partial N_u}{\partial x}=\int_t^u \left\{b_t(s, \zeta_s)+
\half b_{yy}(s,\zeta_s)+ b(s,\zeta_s)\,b_y(s,\zeta_s)\right\}ds.
\]
Moreover,
\[
\frac{\partial}{\partial
x}E^{x, q}_t[\exp(-N_u)|\zeta_u=z]=-E^{x, q}_t\left[\exp(-N_u)\frac{\partial
N_u}{\partial x}\bigg|\zeta_u=z\right].
\]

\end{lemma}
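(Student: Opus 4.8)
The plan is to handle the two displayed identities in turn, both by differentiation under an integral sign, the second being the delicate one because of the conditioning. Throughout recall that, after the Girsanov change of measure performed just above the statement, under $E^{x,q}_t$ the process $\zeta$ is a standard Brownian motion with $\zeta_t=x$; I would realize it on Wiener space as $\zeta_s=x+(\beta_s-\beta_t)$ for $s\in[t,u]$, the increment $(\beta_s-\beta_t)_{s\ge t}$ being independent of the parameter $x$.

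For the first identity, write $F(s,y):=I_t(s,y)+\half b_x(s,y)+\half b^2(s,y)$, so that $N_u=\int_t^u F(s,\zeta_s)\,ds$. By Assumption \ref{bt} one may interchange $\partial_t$ with $\int_0^{\,\cdot}$ to get $I_t(s,y)=\int_0^y b_t(s,w)\,dw$, whence $F(s,\cdot)\in C^1$ with $F_y(s,y)=b_t(s,y)+\half b_{xx}(s,y)+b(s,y)b_x(s,y)$. This function is bounded on $[0,1]\times\bbR$: $b_t$ is bounded by Assumption \ref{bt}, $b$ and $b_x$ by Assumption \ref{fs}, and $b_{xx}$ exists a.e.\ and is bounded because $b_x$ is Lipschitz by Assumption \ref{fs}. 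Since $s\mapsto\zeta_s$ enters only through the additive, $x$-independent term $\beta_s-\beta_t$, differentiation under the integral (dominated convergence with the $x$-independent bound $\|F_y\|_\infty$) yields $\partial_x N_u=\int_t^u F_y(s,\zeta_s)\,ds$, which is exactly the first asserted formula once $b_x,b_{xx}$ are read as the space derivatives $b_y,b_{yy}$.

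For the second identity I would differentiate $x\mapsto E^{x,q}_t[e^{-N_u}\mid\zeta_u=z]$ by moving the $x$-dependence into a reference measure that does not involve the conditioning: realize the conditional law on a space carrying a Brownian bridge $\eta$ from $0$ to $0$ on $[t,u]$, independent of $(x,z)$, via $\zeta_s=x\,\tfrac{u-s}{u-t}+z\,\tfrac{s-t}{u-t}+\eta_s$, so that $E^{x,q}_t[\Phi\mid\zeta_u=z]=\bbE[\Phi(\zeta)]$ and $x\mapsto\zeta_s$ is affine with uniformly bounded slope. Because $e^{-N_u}$ is bounded (indeed $|N_u|\le\|F\|_\infty$ since $u-t\le1$) and $\partial_x N_u$ is bounded by the first part, one may differentiate under $\bbE$ and apply the chain rule to $e^{-N_u}$, obtaining $\partial_x E^{x,q}_t[e^{-N_u}\mid\zeta_u=z]=-E^{x,q}_t\big[e^{-N_u}\,\partial_x N_u\mid\zeta_u=z\big]$. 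Alternatively, and more cleanly, one can bypass the bridge and differentiate at the level of the unnormalized density, using the Feynman--Kac identity $E^{x,q}_t[e^{-N_u}\mid\zeta_u=z]\,q(u-t,x,z)=\Gamma(t,x;u,z)\,e^{I(t,x)-I(u,z)}$ derived just above the lemma, together with the $C^{1,2}$-regularity of $\Gamma$ and the Gaussian bounds of Lemma \ref{Gambounds} and estimates (\ref{est0})--(\ref{est1}), which legitimize the differentiation and the passage to the limit in an $\varepsilon$-approximation of the $\delta$-conditioning.

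The main obstacle is precisely this last interchange of $\partial_x$ with the conditional expectation $E^{x,q}_t[\,\cdot\mid\zeta_u=z]$: the conditioning is on the $\bbP$-null event $\{\zeta_u=z\}$, whose dependence on the differentiation variable must be controlled, either by the bridge coupling above (which puts that dependence into the integrand only) or by carrying the differentiation through the smooth, Gaussian-dominated kernel $\Gamma$ and dividing by $q$ at the end. Once this is in place the remainder is bookkeeping, and the lemma feeds directly into Proposition \ref{hxlim}: from $h(t,x;u,z)=e^{I(u,z)-I(t,x)}E^{x,q}_t[e^{-N_u}\mid\zeta_u=z]$ one gets $\tfrac{h_x}{h}=-b(t,x)+\partial_x\log E^{x,q}_t[e^{-N_u}\mid\zeta_u=z]$, and the displayed formula, together with the two-sided bound $e^{-\|F\|_\infty}\le e^{-N_u}\le e^{\|F\|_\infty}$ and $|\partial_x N_u|\le\|F_y\|_\infty$, gives a uniform bound on $h_x/h$.
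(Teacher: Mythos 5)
Your treatment of the first identity is correct and essentially as in the paper: with the additive realization $\zeta_s = x + (\beta_s - \beta_t)$ the parameter $x$ enters only through the argument of $F(s,\cdot) := I_t(s,\cdot) + \half b_x(s,\cdot) + \half b^2(s,\cdot)$, and $\partial_y F(s,y) = b_t(s,y) + \half b_{yy}(s,y) + b(s,y)b_y(s,y)$ is uniformly bounded (with $b_{yy}$ existing a.e.\ and bounded because $b_y$ is Lipschitz), so differentiating under the $ds$-integral is licit by dominated convergence.

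For the second identity your argument has a genuine gap. You justify the interchange of $\partial_x$ with the conditional expectation by claiming that $e^{-N_u}$ is bounded, supported by ``$|N_u| \le \|F\|_\infty$''. But $F$ is \emph{not} bounded: $I_t(s,y) = \int_0^y b_t(s,w)\,dw$, and Assumption \ref{bt} only gives $b_t$ uniformly bounded, so this term grows linearly in $|y|$ and $\|F\|_\infty = \infty$. Consequently $N_u$ has no deterministic lower bound and $e^{-N_u}$ is not bounded. The paper confronts exactly this point: it derives a stochastic lower bound $N_u \ge K(m_1 + x - k)$ for positive constants $k,K$, where $m_1$ is the running minimum of the driving Brownian motion, and then uses the reflection principle (so that $-m_1$ has the law of $|W_1|$) to dominate $e^{-N_u}$ by $C e^{-m_1}$, which is integrable but unbounded, uniformly for $x$ in compacts. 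That integrable domination is what actually legitimizes the differentiation; the deterministic bound you invoke does not exist.

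Two further points. Your bridge coupling $\zeta_s = x\frac{u-s}{u-t} + z\frac{s-t}{u-t} + \eta_s$ gives $\partial_x\zeta_s = \frac{u-s}{u-t}$, so the pathwise derivative of $N_u$ produced by this coupling carries the weight $\frac{u-s}{u-t}$ inside the time integral; that is not the $\partial_x N_u$ of the first identity, so even with a repaired domination this route does not deliver the formula as stated. Your ``more cleanly'' alternative --- differentiating through the Feynman--Kac representation using the $C^{1,2}$-regularity and Gaussian bounds of $\Gamma$ --- is genuinely close to what the paper actually does (the paper runs a test-function argument, differentiating $E^{x,q}_t[e^{-N_u}f(\zeta_u)]$ for $f\in C^\infty_c$ and matching against the tower-property expansion, supported by a local boundedness estimate for $\partial_x E^{x,q}_t[e^{-N_u}|\zeta_u=z]$ deduced from Lemma \ref{Gambounds} and estimate (\ref{est1})), but in your proposal the passage through the $\delta$-conditioning is left as an unchecked black box.
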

\begin{proof} In order to prove the first statement note that $\zeta_u=x + W_u$ for some Brownian motion
with $W_t=0$. Since the integrands are
differentiable functions with bounded derivatives, this allows us to
differentiate under the integral sign. Although derivative exists
only almost everywhere, it is no problem since the law of Brownian
motion is absolutely continuous with respect to the Lebesgue
measure. For  the second assertion take an infinitely
differentiable $f:\bbR \mapsto \bbR$ with a compact support.
Therefore, if differentiation inside the expectation is justified, \bean \frac{\partial}{\partial
x}E^{x, q}_t[\exp(-N_u)f(\zeta_u)]&=&E^{x, q}_t\left[\frac{\partial}{\partial
x}\left\{\exp(-N_u)f(\zeta_u)\right\}\right]\\
&=&-E^{x, q}_t\left[\frac{\partial N_u}{\partial
x}\exp(-N_u)f(\zeta_u)\right]+E^{x, q}_t\left[\exp(-N_u)f'(\zeta_u)\right].
\eean
As $\frac{\partial N_u}{\partial x}$, $f$ and $f'$ are bounded, we only need to show $\exp(-N_u)$ is bounded by an integrable function in order to justify the differentiation. Indeed, using Assumptions \ref{fs} and \ref{bt} on the boundedness of function $b$ and its first derivatives, together with the definition of $N_u$, one can easily prove that $N_u \geq K (m_u +x -k) \geq K (m_1 +x -k)$ for some positive  constants $k, K$, where $m_u=\min_{t \leq s \leq u}W_s$. Thus, $\exp(-N_u)$ is bounded above by the random variable $Ce^{-m_1}$ for a positive constant $C$, which may depend on $x$ in a continuous fashion. It follows from the reflection principle for Brownian motion that $-m_1$ has the same law as $|W_1|$. Moreover the random variable $\exp(|W_1|)$ being integrable, we have that $\exp(-N_u)$ is bounded, uniformly in $u$, by an integrable function which does not depend on $x$ when $x$ is restricted to a compact domain. This justifies the differentiation inside the expectation.

 On the other hand, \bean E^{x, q}_t\left[\frac{\partial
}{\partial
x}E^{x, q}_t[\exp(-N_u)|\zeta_u]\,f(\zeta_u)\right]&=&E^{x, q}_t\left[\frac{\partial
}{\partial
x}\left\{E^{x, q}_t[\exp(-N_u)|\zeta_u]f(\zeta_u)\right\}\right]\\
&&-E^{x, q}_t\left[E^{x, q}_t[\exp(-N_u)|\zeta_u]f'(\zeta_u)\right],
\eean thus, we will be done as soon as we have that
\[
\frac{\partial}{\partial
x}E^{x, q}_t[\exp(-N_u)f(\zeta_u)]=E^{x, q}_t\left[\frac{\partial
}{\partial
x}\left\{E^{x, q}_t[\exp(-N_u)|\zeta_u]f(\zeta_u)\right\}\right].
\]
Since $E^{x, q}_t[\exp(-N_u)|\zeta_u]$ is bounded and away from zero whenever $(x,\zeta_u)$
belongs to a bounded domain\footnote{These can be proven by similar arguments that are used in showing  $\exp(-N_u)$ is bounded by an integrable function.} and $f$ has a
compact support, this will follow if $\frac{\partial }{\partial
x}E^{x, q}_t[\exp(-N_u)|\zeta_u]$ is bounded for fixed $u>t$ whenever $(x,\zeta_u)$
belongs to a bounded domain in $\bbR^2$.  To see this note that
\bean (u-t) \frac{\partial}{\partial x}\mbox{log}\Gamma(t,x,u,z)&=&
(u-t)
\frac{\partial}{\partial x}\mbox{log}h(t,x,u,z) + z-x\\
&=&-(u-t)b(t,x) +(u-t)\frac{\frac{\partial }{\partial
x}E^{x, q}_t[\exp(-N_u)|\zeta_u=z]}{E^{x, q}_t[\exp(-N_u)|\zeta_u=z]}+
z-x. \eean The claim follows from the boundedness of $b$, (\ref{est1}) and
Lemma~\ref{Gambounds}. Thus,
\[
\frac{\partial}{\partial
x}E^{x, q}_t[\exp(-N_u)|\zeta_u=z]=-E^{x, q}_t\left[\exp(-N_u)\frac{\partial
N_u}{\partial x}\bigg|\zeta_u=z\right].
\]
\end{proof}

{\sc Proof of Proposition \ref{hxlim}.}\hspace{3mm} First note that in view of Lemma \ref{derivativeN}
\[
\frac{h_x(t,x;u,z)}{h(t,x;u,z)}=- \left(b(t,x)-\frac{E^{x, q}_t[\exp(-N_u)\frac{\partial
}{\partial
x}N_u|\zeta_u=z]}{E^{x, q}_t[\exp(-N_u)|\zeta_u=z]}\right).
\]

As $\frac{\partial}{\partial x} N_u$ is uniformly bounded, in $u$ and $x$, we have
\[
\left|\frac{h_x(t,x;u,z)}{h(t,x;u,z)}\right| \leq |b(t,x)| + \sup_x\left\{\mbox{ ess sup} \left|\frac{\partial}{\partial x} N_u\right|\right\}
\]
by Jensen's inequality. However,  $\sup_{x}\left\{\mbox{ess sup}\left|\frac{\partial}{\partial x} N_u\right|\right\}$ is finite under our assumptions.  Finally, since $b$ is also bounded under our assumptions, the result follows.
\qed
\begin{lemma} \label{j:tech} Suppose that $a(t,z)$ and $\sigma(t)$ satisfy Assumption \ref{a:model}. Let $Z$ satisfy (\ref{SDEsignal}) and $X$ be the process defined in Theorem \ref{t31}. Then,
\begin{enumerate}
\item $b(t,z)=b(t)$ where $b$ is defined by (\ref{bee});
\item the fundamental solution of
\[
w_u(u,z)=\half w_{zz}(u,z) -(b(u,z) w(u,z))_z
\]
is  $\Gamma(t,x;u,z)=q(u-t, x+\int_t^u b(s)ds, z)$ where $q$ is the transition density of standard Brownian motion;
\item For every $z \in \bbR$
\[
E^{0,z}\left[ \int_{0}^{1}H^{2}\left(t,X_t\right)dt\right] <\infty,
\]
where $H(t,x)=F(V^{-1}(t),x)$ with $F$ given by (\ref{d:f}).
\end{enumerate}
\end{lemma}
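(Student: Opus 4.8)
Part (1) is exactly the computation carried out in Remark \ref{rem:AG}: under Assumption \ref{a:model}.2 the function $b$ of (\ref{bee}) satisfies $b(t,x)=-\half a_z(t,0)$, which does not depend on $x$; write $b(t)$ for this quantity. Part (2) then follows from Proposition \ref{Gexists}: $\Gamma(t,x;u,\cdot)$ is the transition density of the diffusion $\zeta$ of (\ref{zeta}), and by (1) that diffusion is simply $d\zeta_s=d\beta_s+b(s)\,ds$, so conditionally on $\zeta_t=x$ the law of $\zeta_u$ is Gaussian with mean $x+\int_t^u b(s)\,ds$ and variance $u-t$, which is the asserted formula. (Alternatively, one checks by a one-line differentiation that the stated Gaussian kernel solves $w_u=\half w_{zz}-b(u)w_z$ and has the correct $\delta$-type initial behaviour in the sense of (\ref{eq:fsb}), and invokes uniqueness of the fundamental solution.)

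The content is in part (3). First I would prove a pointwise bound on $H$. Writing $H(t,x)=F(V^{-1}(t),x)=\int_\bbR f(y)\,G(t,x;1,y)\,dy$ and applying Jensen's inequality to the probability measure $G(t,x;1,\cdot)\,dy$ together with the growth bound $|f(y)|\le k_1 e^{k_2 A(1,y)}$ of Assumption \ref{assf}, one gets $H(t,x)^2\le k_1^2\int_\bbR e^{2k_2 A(1,y)}G(t,x;1,y)\,dy$. Since by Proposition \ref{Gexists} $G(t,x;1,\cdot)$ is the transition density of the diffusion $\xi$ of (\ref{xii}) and, by (1), $A(\cdot,\xi)$ is a Brownian motion plus the deterministic drift $\int_0^\cdot b(s)\,ds$, this integral is the Laplace transform of a Gaussian law with mean $A(t,x)+\int_t^1 b(s)\,ds$ and variance $1-t$; evaluating it and using that $b$ is bounded and $t\in[0,1]$ gives a universal constant $C$ with $H(t,x)^2\le C\,e^{2k_2 A(t,x)}$. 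By Tonelli it therefore suffices to show $\int_0^1 E^{0,z}[e^{2k_2 R_t}]\,dt<\infty$ for $R_t:=A(t,X_t)$.

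Next I would use that $R_t$ is Gaussian. By (1) the process $U_t=A(V(t),Z_t)$ solves the affine SDE $dU_t=\sigma(t)\,d\beta_t+\sigma^2(t)b(t)\,dt$, hence is a Gaussian process with $D':=\sup_{t\le1}E^{0,z}[U_t^2]<\infty$ (using $\sigma^2$ and $b$ bounded). From the formula in (2), $h(t,x;u,z)=q\bigl(u-t,\,x+\int_t^u b(s)\,ds,\,z\bigr)/q(u-t,x,z)$, so $\frac{h_x}{h}(t,x;V(t),z)=-\frac{1}{V(t)-t}\int_t^{V(t)}b(s)\,ds$ is deterministic and bounded by $\sup_{[0,1]}|b|$; consequently (\ref{sdeR}) reduces to the \emph{linear} equation
\[ dR_t=dB_t+\frac{U_t-R_t}{V(t)-t}\,dt+c(t)\,dt,\qquad R_0=0,\quad c(t):=b(t)-\frac{1}{V(t)-t}\int_t^{V(t)}b(s)\,ds, \]
with $c$ bounded. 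Since $B$ is independent of $\beta$ and of $Z_0$, $(R,U)$ is jointly Gaussian, so $R_t$ is Gaussian and $E^{0,z}[e^{2k_2R_t}]=\exp\bigl(2k_2 E^{0,z}[R_t]+2k_2^2\,\mathrm{Var}^{0,z}(R_t)\bigr)$; everything thus reduces to a bound $\sup_{t<1}E^{0,z}[R_t^2]<\infty$.

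This last estimate is the delicate point, and I expect it to be the main obstacle: the drift coefficient $(V(t)-t)^{-1}$ blows up as $t\uparrow1$, so a naive Gronwall argument does not close — the resolution is that the singular drift pulls $R_t$ toward the well-behaved Gaussian $U_t$. Fixing $\eps\in(0,1)$, I would apply It\^{o} to $R_t^2$ and then to $\psi(t)^{2-\eps}R_t^2$, where $\psi(t):=\lambda(t)^{-1}=\exp\int_0^t\frac{ds}{V(s)-s}$; bounding $2R_tU_t\le\eps R_t^2+\eps^{-1}U_t^2$ and $2R_tc(t)\le R_t^2+c(t)^2$, the singular terms combine with weight $(2-\eps)+(\eps-2)=0$ and cancel, leaving
\[ \frac{d}{dt}\!\left(\psi(t)^{2-\eps}E^{0,z}[R_t^2]\right)\le\frac{\eps^{-1}D'}{V(t)-t}\,\psi(t)^{2-\eps}+\left(E^{0,z}[R_t^2]+\sup_s c(s)^2+1\right)\psi(t)^{2-\eps}. \]
Integrating, using $\int_0^t\psi^{2-\eps}/(V-s)\,ds=\frac{1}{2-\eps}(\psi(t)^{2-\eps}-1)$ and $\int_0^t\psi^{2-\eps}\,ds\le\psi(t)^{2-\eps}$, and then Gronwall's lemma applied to $g(t):=\psi(t)^{2-\eps}E^{0,z}[R_t^2]$, yields $g(t)\le C'\psi(t)^{2-\eps}$, i.e. $\sup_{t<1}E^{0,z}[R_t^2]\le C'<\infty$. (Finiteness of $E^{0,z}[R_t^2]$ for each fixed $t<1$, needed to run It\^{o} and take expectations, is the a priori $L^2$ estimate on $R$ established in Section \ref{general}.) Hence the mean and variance of $R_t$ are bounded uniformly on $[0,1)$, so $t\mapsto E^{0,z}[e^{2k_2R_t}]$ is bounded there, and (3) follows.
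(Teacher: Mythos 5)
Your parts (1) and (2) coincide with the paper's. Part (3) also starts the same way: you reduce the moment condition to showing $\sup_{t<1}E^{0,z}[e^{2k_2 R_t}]<\infty$ via the Jensen/growth bound on $F$, and you observe, as the paper does, that once $b$ is a function of $t$ alone the SDE for $R$ becomes \emph{linear},
\[
dR_t=dB_t+\frac{U_t-R_t}{V(t)-t}\,dt+\Big(b(t)-\tfrac{1}{V(t)-t}\textstyle\int_t^{V(t)}b(s)\,ds\Big)\,dt,
\]
so $R_t$ is Gaussian. Where you diverge is in how you control its second moment. The paper simply integrates this linear SDE by variation of constants (equation (\ref{eq:rsol}) in the paper), reads off the mean and variance of $R_t$, and notes all coefficients involve $\exp\bigl(-\int_s^t \tfrac{du}{V(u)-u}\bigr)\le 1$ together with bounded $b,\sigma$; boundedness of the moment generating function is then immediate. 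You instead run a Lyapunov/Gronwall argument: applying It\^o to $\psi(t)^{2-\eps}R_t^2$ with $\psi=\lambda^{-1}$, the $-2R_t^2/(V(t)-t)$ contraction from the drift, the $+(2-\eps)R_t^2/(V(t)-t)$ from the weight, and the $+\eps R_t^2/(V(t)-t)$ from Young's inequality on $2R_tU_t$ sum to zero, neutralizing the singularity, and Gronwall then gives $\sup_{t<1}E^{0,z}[R_t^2]<\infty$. The computation checks out (in particular your identity $\frac{h_x}{h}(t,x;V(t),z)=-\frac{1}{V(t)-t}\int_t^{V(t)}b(s)\,ds$ is correct, matching what drops out of $\Gamma=q(u-t,x+\int_t^u b,z)$). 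What the explicit solution buys is brevity and sharper information (closed-form mean and variance); what your Lyapunov argument buys is robustness — it never uses linearity beyond the Young step, so it would survive mild nonlinear perturbations of the drift where the variation-of-constants formula is unavailable. Both are valid; given that the SDE really is linear here, the paper's route is the more economical one.
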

\begin{proof}
\begin{enumerate}
\item This follows from Remark \ref{rem:AG}.
\item Recall that $\Gamma$ is the transition density of
\[
d\zeta_t=d\beta_t + b(t) dt.
\]
Thus, $\zeta_u-\zeta_t$ has a Gaussian distribution with mean $\int_t^u b(s) ds$ and variance $u-t$.
\item Under the assumptions of the lemma, the process $U$ and $R$ as defined in Lemma \ref{pequivrho} satisfy
\bean
dU_t&=&\sigma(t)d\beta_t +\sigma^2(t)b(t)dt  \\
dR_t&=&dB_t + \left\{ \frac{p_x (t, R_t, U_t)}{p(t,R_t,U_t)} +
b(t)\right\}dt, \eean
where $p(t,x,z)=\Gamma(t,x;V(t),z)$. Therefore,
\[
dR_t=dB_t+ \left\{\frac{U_t-R_t -\int_t^{V(t)}b(s)ds}{V(t)-t} + b(t)\right\}dt.
\]
The solution of the above SDE is given by
\bea
R_t&=&U_0+ \int_0^t b(s) \sigma^2(s) ds - (U_0-R_0)\exp\left(-\int_0^t \frac{1}{V(s)-s}ds\right) \nn \\
&&-\int_0^t \exp\left(-\int_s^t \frac{1}{V(u)-u}du\right) \left(b(s)\sigma^2(s)-b(s)+\frac{\int_s^{V(s)}b(u)du}{V(s)-s}\right)ds \label{eq:rsol}\\
&&+ \int_0^t \sigma(s) \left(1-\exp\left(-\int_s^t \frac{1}{V(u)-u}du\right)\right)d\beta_s + \int_0^t \exp\left(-\int_s^t \frac{1}{V(u)-u}du\right) dB_s. \nn
\eea
Therefore, $R_t$ is a normal variable with bounded (uniformly in $t$) mean and variance. Indeed, due to Remark \ref{rem:lambda_to_0}, $\sup_{s,t}\exp\left(-\int_s^t \frac{1}{V(u)-u}du\right) <\infty$. Moreover, $b$ and $\sigma$ are bounded by assumption.  Therefore, the moment generating function of $R_t$ is a bounded function of time in $[0,1]$.
Next, observe that
\bean
F(t,z)&=& E[f(Z_1)|Z_t=z]=E[f(A^{-1}(1,U_1))|U_t=A(V(t),z)] \\
& \leq & k_1E[\exp(k_2 U_1)|U_t=A(V(t),z)]\\
&=&k_1 \exp\left(k_2\int_t^1 b(s)ds + k_2 A(V(t),z) + \half k_2^2 (1-V(t))\right)\\
&\leq & K \exp(k_2 A(V(t),z))
\eean
due to Assumption \ref{assf} on $f$; the third line is due to the form of the moment generating function of the Gaussian random variable $U_1-U_t$. This in particular implies
\bean
E^{0,z}[H^2(t,X_t)]&=&E^{0,z}[F^2(V^{-1}(t),X_t)]\\
& \leq & K^2 E^{0,z}\left[\exp\left(2 k_2 A(t, X_t)\right)\right]\\
&=&K^2 E^{0,z}\left[\exp\left(2 k_2 R_t \right)\right].
\eean
Note that $sup_{t \in [0,1]} E^{0,z}\left[\exp\left(2 k_2 R_t \right)\right] < \infty$ since the moment generating function of $R$ is bounded. Hence, the claim follows.
\end{enumerate}
\end{proof}

\end{document}